\definecolor{webgreen}{rgb}{0,.5,0}
\definecolor{webbrown}{rgb}{.6,0,0}
\newcommand{\seqnum}[1]{\href{http://oeis.org/#1}{\underline{#1}}}
\newcommand{\beql}[1]{\begin{equation}\label{#1}}
\newcommand{\eeq}{\end{equation}}
\newcommand{\eqn}[1]{(\ref{#1})}
\newcommand{\sA}{{\cal{A}}}
\newcommand{\sB}{{\cal{B}}}
\newcommand{\sC}{{\cal{C}}}
\newcommand{\sE}{{\cal{E}}}
\newcommand{\sP}{{\cal{P}}}
\newcommand{\sQ}{{\cal{Q}}}
\newcommand{\sY}{{\cal{Y}}}
\newcommand{\Snj}{\scriptstyle}
\newcommand{\TAIL}{\Omega}
\newcommand{\PROB}{\theta}
\newcommand{\EMPTY}{\epsilon}
\DeclareMathOperator{\CN}{cn}
\newtheorem{thm}{Theorem}{\bfseries}{\itshape}
\newtheorem{cor}[thm]{Corollary}{\bfseries}{\itshape}
\newtheorem{lem}[thm]{Lemma}{\bfseries}{\itshape}
{\bfseries}{\itshape}
\newtheorem{conj}[thm]{Conjecture}{\bfseries}{\itshape}
\newcommand{\LIMA}{{48}}
\newcommand{\LIMB}{{49}} 
\newcommand{\LIMC}{{80}}
\begin{document}
\theoremstyle{plain}

\begin{center}
{\large\bf On Curling Numbers of Integer Sequences} \\
\vspace*{+.2in}

Benjamin Chaffin, \\
Intel Processor Architecture, \\
2111 NE 25th Avenue, Hillsboro, OR 97124, USA, \\
Email: \href{mailto:chaffin@gmail.com}{\tt chaffin@gmail.com}, \\

\vspace*{+.1in}

John P. Linderman, \\
1028 Prospect Street, Westfield, NJ 07090, USA, \\
Email: \href{mailto:jpl.jpl@gmail.com}{\tt jpl.jpl@gmail.com}, \\

\vspace*{+.1in}

N. J. A. Sloane,\footnote{To whom correspondence should be addressed.} \\
The OEIS Foundation Inc., \\
11 South Adelaide Avenue, Highland Park, NJ 08904, USA, \\
Email: \href{mailto:njasloane@gmail.com}{\tt njasloane@gmail.com}, \\

and \\

Allan R. Wilks, \\
425 Ridgeview Avenue, Scotch Plains, NJ 07076, USA, \\
Email: \href{mailto:allan.wilks@gmail.com}{\tt allan.wilks@gmail.com}. \\

\vspace*{+.1in}

%Dec. 25, 2012
%\vspace*{+.1in}

{\bf Abstract}\footnote{A preliminary report on the work in Section \ref{Sec2} was given in \cite{BS2009}}. 
\end{center}

Given a finite nonempty sequence $S$ of integers,
write it as $XY^k$, where $Y^k$ is a power of greatest exponent 
that is a suffix of $S$:
this $k$ is the {\em curling number} of $S$.
The {\em curling number conjecture} is that if
one starts with {\em any} initial sequence $S$,
and extends it by repeatedly appending the curling number of 
the current sequence, the sequence will eventually reach $1$.
The conjecture remains open. 
In this paper we discuss the special case when
$S$ consists just of $2$'s and $3$'s.
Even this case remains open, but we
determine how far a sequence consisting of $n$ $2$'s and $3$'s
can extend before reaching a $1$, conjecturally for $n \le 80$.
We investigate several related combinatorial problems,
such as finding $c(n,k)$, the number of binary sequences of length $n$ and curling 
number $k$, and $t(n,i)$, the number
of sequences of length $n$ which extend for $i$ steps before reaching a $1$.
A number of interesting combinatorial problems remain unsolved.

\section{The curling number conjecture}\label{Sec1}
Given a finite nonempty sequence $S$ of integers,
write it as $S = XY^k$, where $X$ and $Y$ are sequences of integers 
and $Y^k$ is a power of greatest exponent that is a suffix of $S$:
this $k$ is the {\em curling number} of $S$,
denoted by $\CN(S)$.
$X$ may be the empty sequence $\EMPTY $\,; there
may be several choices for $Y$, although the shortest such $Y$
which achieves $k$
(which as we shall see in \S\ref{SecCa} is primitive) is unique.

For example, if $S = 0\,1\,2\,2\,1\,2\,2\,1\,2\,2$,
we could write it as $XY^2$, where $X = 0\,1\,2\,2\,1\,2\,2\,1$
and $Y = 2$, or as $XY^3$, where $X = 0$
and $Y = 1\,2\,2$. The latter representation is to be
preferred, since it has $k=3$, and as $k=4$
is impossible, the curling number of this $S$ is $3$.

The following conjecture was stated
by van de Bult et al. \cite{GIJ}:

\begin{conj}\label{CNCconj}
The curling number conjecture.
If one starts with any initial sequence of integers $S$,
and extends it by repeatedly appending the curling number of 
the current sequence, the sequence will eventually reach $1$.
\end{conj}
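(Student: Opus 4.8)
The plan is to argue by contradiction. Suppose there is an initial sequence $S_0$ from which the process never reaches $1$, so that the infinite sequence $s_1 s_2 s_3 \cdots$ it generates satisfies $s_{n+1} = \CN(s_1 \cdots s_n) \ge 2$ for every $n$ past the starting length. My first step would be to reduce the alphabet. Since a curling number $k$ can only be reproduced as a later term once the sequence already contains enough repeated structure, large entries are comparatively inert, and one expects --- and should establish as a lemma --- that it suffices to treat starting sequences over a small alphabet, ideally the $\{2,3\}$ case that the rest of the paper isolates. Making such a reduction rigorous, so that the full Conjecture \ref{CNCconj} follows from the bounded-alphabet case, is the first technical hurdle.

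The heart of the matter is to find a \emph{monovariant}, i.e.\ a quantity $\Phi(S)$ taking values in a well-ordered set (say ordinals, or $\mathbb{N}^{d}$ under lexicographic order) that strictly decreases at every step on which the appended curling number is at least $2$. If such a $\Phi$ existed, well-foundedness would forbid an infinite descent and so force a $1$ to appear. The obvious candidates fail at once: the length of $S$ \emph{increases} by one at each step, so it cannot serve, and the curling number itself oscillates unpredictably. The refinement I would pursue is to base $\Phi$ on the structure $S = XY^{k}$ supplied by the definition --- for instance on the length $|Y|$ of the shortest primitive period realizing $k$, or on lexicographic data such as $(\,|X|,\,|Y|\,)$ --- and to prove a transition lemma describing $\CN(XY^{k}\,k)$ in terms of $\CN(XY^{k})$. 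The hope is that appending $k$ either shortens the governing period $Y$ or strictly simplifies the prefix $X$ in a way $\Phi$ records as a decrease, while the run of consecutive steps keeping the curling number $\ge 2$ stays bounded by an invariant of the current $Y$-block, by induction on $|Y|$.

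I fully expect this last step to be the main, and indeed decisive, obstacle. The difficulty is intrinsic: the curling number is a global, suffix-dependent statistic with no bounded-memory description, so there is no evident finite-state or local rule governing the dynamics, and appending a single term can drastically reorganize which suffix is the maximal power. Every natural potential one writes down turns out to be either non-monotone or unbounded, which is precisely why Conjecture \ref{CNCconj} has resisted proof. A realistic intermediate goal --- the one the present paper actually pursues --- is therefore not a full monovariant but quantitative control: bounding the number of steps a length-$n$ sequence of $2$'s and $3$'s can survive before reaching a $1$, and determining the combinatorial quantities $c(n,k)$ and $t(n,i)$, in the hope that the structure these reveal eventually suggests the missing well-founded ordering.
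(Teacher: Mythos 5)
There is a genuine gap here, and it is the whole proof: the statement you were given is Conjecture \ref{CNCconj}, which the paper itself does not prove and explicitly declares open (``it has so far resisted all attempts to prove it''), so there is no proof in the paper to match your attempt against, and your text is a research plan rather than an argument. Concretely, neither of your two pillars is established. First, the alphabet-reduction lemma is only hoped for: you assert that ``large entries are comparatively inert'' and that the general conjecture should follow from the bounded-alphabet case, but no such reduction is proved, and the paper makes no such claim either --- it studies the $\{2,3\}$ case as the \emph{simplest special case}, emphasizing that even this case is open, not as a case whose resolution would imply the general statement. Second, the monovariant $\Phi$ and the transition lemma describing $\CN(XY^k\,k)$ in terms of $\CN(XY^k)$ are never constructed; you yourself identify this as the decisive obstacle. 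Since well-founded descent is the entire engine of the proposed argument, the proposal proves nothing as it stands.

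It is worth noting why the specific candidates you float, such as $|Y|$ or the pair $(\,|X|,\,|Y|\,)$ under lexicographic order, are unlikely to work without substantial new ideas: appending the curling number can \emph{increase} the curling number and completely reorganize which suffix realizes the maximal power (the paper's own example $2\,3\,2\,3 \to \cdots \to 2\,3\,2\,3\,2\,2\,2$, where the governing $Y$ shrinks from $3\,2$ to $2$ and the exponent jumps from $2$ to $3$, already shows the non-local reshuffling), and the empirical data --- tail lengths $\TAIL(48)=131$ and $\TAIL(77)\ge 173$, with long flat stretches and irregular jumps --- indicates that any decreasing potential would have to tolerate extremely long excursions, which rules out the simple bounded-by-$|Y|$ induction you sketch. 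Your closing paragraph, which retreats to quantitative control of $\TAIL(n)$, $c(n,k)$, and $t(n,i)$ for $\{2,3\}$-sequences, is an accurate description of what the paper actually does; but that is evidence gathering about an open problem, not a proof of the conjecture.
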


In other words,
if $S_0 = S$ is any finite nonempty sequence of integers,
and we define  $S_{m+1}$ to be the concatenation
\beql{Eq1}
S_{m+1} ~:=~ S_m \, \CN(S_m) \mbox{~~for~} m \ge 0 \,,
\eeq
then the conjecture is that for some $t \ge 0$ we will have $\CN(S_t) = 1$. 
The smallest such $t$ is the {\em tail length} of $S_0$, denoted by 
$\tau (S_0)$ (and we set $\tau (S_0) = \infty$ if the conjecture is false).

For example, suppose we start with $S_0 = 2\,3\,2\,3$.
By taking $X = \EMPTY $, $Y = 2\,3$, we have $S_0 = Y^2$,
so $\CN(S_0) = 2$, and we get $S_1 = 2\,3\,2\,3\,2$.
By taking $X = 2$, $Y = 3\,2$ we 
get $\CN(S_1) = 2$, $S_2 = 2\,3\,2\,3\,2\,2$.
By taking $X = 2\,3\,2\,3$, $Y = 2$ we 
get $\CN(S_2) = 2$, $S_3 = 2\,3\,2\,3\,2\,2\,2$.
Again taking $X = 2\,3\,2\,3$, $Y = 2$ we 
get $\CN(S_3) = 3$, $S_4 = 2\,3\,2\,3\,2\,2\,2\,3$.
Now, unfortunately, it is impossible to write 
$S_4 = XY^k$ with $k>1$, so $\CN(S_4) = 1$,
$S_5 = 2\,3\,2\,3\,2\,2\,2\,3\,1$, and we have reached a $1$,
as predicted by the conjecture.
For this example, $\tau(S_0)=4$.
(If we continue the sequence from this point, it 
joins Gijswijt's sequence, discussed in \S\ref{Sec3}.)

Some of the proofs in van de Bult et al. \cite{GIJ} could be shortened and
the results strengthened if the conjecture were known to be true.
All the available evidence suggests that the conjecture {\em is} true,
but it has so far resisted all attempts to prove it.

In this paper we report on some extensive investigations into the case 
when the starting sequence consists of 2's and 3's (although
even in this special case the conjecture remains open).

In Section \ref{Sec2} we study how far a starting sequence consisting of $n$ 2's and 3's can extend
before reaching a 1. Call the maximum such length $\TAIL(n)$.
That is, $\TAIL(n)$ is the maximal value of the tail length $\tau (S_0)$ taken over
all sequences $S_0$ of 2's and 3's of length $n$.
We determine $\TAIL(n)$  for all $n \le \LIMA$, and conjecturally for
all $n \le \LIMC$ (Table \ref{Tabmu} and Figure \ref{Fig1}).
The data suggests some properties that should be possessed by especially
good starting sequences (Properties P2, P3, P4 in \S\ref{Sec2P}).
Although we have not found any algebraic construction
for good starting sequences, Section \ref{Sec2C}
describes a method which sometimes succeeds in building starting sequences
of greater length.
The algorithm which allowed us to extend the search to length $80$ is
discussed in \S\ref{Sec2D}.
We would not be surprised if the conjecture in this special case 
turns out to be a consequence of known results
on the unavoidability of patterns in long binary sequences---we discuss
this briefly in \S\ref{Sec2S}.

Section \ref{Sec3C} is devoted to the combinatorial question: what is 
the number $c(n,k)$  of binary sequences of length $n$ and
curling number $k$? This seems to be a surprisingly difficult problem,
and we have succeeded only in relating $c(n,k)$ to two
subsidiary quantities: $p(n,k)$,
the number of such sequences that are primitive, and $p'(n,k)$,
the number that are both primitive and robust (see  \S\ref{SecCa}).
The main results of this section are the formulas for $c(n,k)$ in Theorems \ref{ThCK} 
and \ref{ThSQRT}. With their help we are able to enumerate the curling
numbers of all binary sequences of length $n \le 104$.
The resulting table can be seen in entry \seqnum{A216955}\footnote{Throughout this article,
six-digit numbers prefixed by A refer to entries in the OEIS \cite{OEIS}.}
 in \cite{OEIS}. The number of binary sequences with curling number 1,
 $c(n,1)$ (\seqnum{A122536}),
 is especially interesting and is discussed in  \S\ref{SecCd}.
 Some further recurrences given there enable us to compute $c(n,1)$
 for $n \le 200$ (although we still do not know an explicit formula).
 We make frequent use of the classical Fine-Wilf theorem,
 and it and two other preliminary results are given in \S\ref{SecCb}.  
 The differences $d(n,k) ~:=~ 2\,c(n-1,k) ~-~ c(n,k)$ 
 show the structure of the $c(n,k)$ table more clearly than the numbers
 $c(n,k)$ themselves, and are the subject of \S\ref{SecCf}. 
 
 In Section \ref{Sec4}, we study the number $t(n,i)$ of sequences of length $n$ with 
 tail length $i$, where $0 \le i \le \TAIL(n)$. 
 By direct search we have determined $t(n,i)$
 for $n \le 48$ (\seqnum{A217209}), although without finding
 any recurrences (except for $t(n,0)$, which is the same as $c(n,1)$).
 The terms in each row of the $t(n,i)$ table occur in clumps, at least for $n \le 48$.
 In \S\ref{Sec3t} and \S\ref{Sec3a} we investigate some statistics 
 of the $t(n,i)$ table, although we are a long way from finding
 a model which explains the clumps.
  Sections \ref{Sec3r}, \ref{Sec3b}, \ref{Sec3p} discuss
 some combinatorial questions related to tail lengths.
 If the starting sequence $S_0$ is sufficiently long,
 it seems plausible that prefixing $S_0$ with a 2 or 3
 is unlikely to {\em decrease} the tail length.
 If one of these prefixes decreases the tail length,
 we call $S_0$ {\em rotten}, and if both prefixes 2 and 3 decrease
 the tail length we call it {\em doubly rotten}.
 Rotten sequences certainly exist, but up to length 34
 there are no doubly rotten sequences,
 and we conjecture than none exist of any length (see Conjecture \ref{ConjDR}).
 If this conjecture were true, it would explain a certain phenomenon
 that we observed in \S\ref{Sec2P},
 and it would also imply that $\TAIL(n+1) \ge \TAIL(n)$ for
all $n$, something that we do not know at present.

In Section \ref{Sec3} we briefly describe Gijswijt's sequence (\seqnum{A090822}),
which was the starting point for this investigation.
The last section summarizes the open problems
mentioned in the paper.
  
\paragraph{Notation}

Since the starting sequence $S$ can be any sequence of integers, it
seems appropriate in this paper to speak about ``sequences''
rather than ``words'' over some alphabet. However, we
will make use of certain terminology (such as ``prefix'', ``suffix'')
from formal language theory (cf. \cite{Loth}).

Sequences will be denoted by upper case Latin letters. 
$S^k$ means $SS \cdots S$, where $S$ is repeated $k$ times.
The length of $S$ is denoted by $|S|$.
$\EMPTY $ denotes the empty sequence.

Sets of sequences will be denoted by script letters (e.g., $\sC(n,k)$)
and their cardinalities by the corresponding lower case 
Latin letters (e.g., $c(n,k)$).
Greek letters and other lower case Latin letters will also denote numbers.
The symbol $\#$ denotes the cardinality of a set.

The curling number of $S$ is denoted by $\CN(S)$.
For a starting sequence 
$S_0 \, := \, s_1 \, s_2 \, \cdots \, s_n$
of length $n$, where the $s_i$ are arbitrary integers, we define
$S_{m+1}$ to be the concatenation $S_m \,\CN(S_m) = s_1 \, \cdots \ s_{n+m+1}$
for $m \ge 0$.
If $\CN(S_t)=1$ for some $t \ge 0$, then we call the smallest
such $t$ the {\em tail length} of $S_0$,
denoted by $\tau(S_0)$, and the corresponding sequence
$S^{(e)} \, := \, S_t = s_1 \, \cdots \ s_{n+t}$
is the {\em extension} of $S_0$.
If no such $t$ exists, then we set $\tau(S_0) = \infty$,
$S^{(e)} = S_{\infty}$ (and the curling number conjecture would be false).

\section{Sequences of $2$'s and $3$'s}\label{Sec2}

\begin{table}[!h]
$$
\begin{array}{|r|rrrrrrrrrrrr|}
\hline
     n   & 1 & 2 & 3 & 4 & 5 & 6 & 7 & 8 &   9 & 10 & 11 & 12 \\
\TAIL(n) & 0 & 2 & 2 & 4 & 4 & 8 & 8 & 58 & 59 & 60 & 112 & 112 \\
\hline
     n & 13    & 14 & 15 & 16 & 17 & 18 & 19 & 20 & 21 & 22 & 23 & 24 \\
\TAIL(n) & 112 & 118 & 118 & 118 & 118 & 118 & 119 & 119 & 119 & 120 & 120 & 120 \\
\hline
     n & 25 & 26 & 27 & 28 & 29 & 30 & 31 & 32 & 33 & 34 & 35 & 36 \\
\TAIL(n) & 120 & 120 & 120 & 120 & 120 & 120 & 120 & 120 & 120 & 120 & 120 & 120 \\
\hline
     n & 37 & 38 & 39 & 40 & 41 & 42 & 43 & 44 & 45 & 46 & 47 & 48 \\
\TAIL(n) & 120 & 120 & 120 & 120 & 120 & 120 & 120 & 120 & 120 & 120 & 120 & 131 \\
\hline
     n &  49 &  50 &  51 &  52 &  53 &  54 &  55 &  56 &  57 &  58 &  59 &  60 \\
\TAIL(n) & 131 & 131 & 131 & 131 & 131 & 131 & 131 & 131 & 131 & 131 & 131 & 131 \\
\hline
     n &  61 &  62 &  63 &  64 &  65 &  66 &  67 &  68 &  69 &  70 &  71 &  72 \\
\TAIL(n) & 131 & 131 & 131 & 131 & 131 & 131 & 131 & 132 & 132 & 132 & 132 & 132 \\
\hline
     n &  73 &  74 &  75 &  76 &  77 &  78 & 79  & 80 &  &  &  &  \\
\TAIL(n) & 132 & 132 & 132 & 133 & 173 & 173 & 173  & 173  &  &  &  & \\
\hline
\end{array}
$$
\caption{Lower bounds on $\TAIL(n)$, 
the maximal tail length that can be achieved before
a $1$ appears, for any starting sequence $S_0$ consisting of $n$ $2$'s and $3$'s.
Entries for $n \le \LIMA$ are known to be exact; the other
entries are conjectured to be exact.}
\label{Tabmu}
\end{table}

\subsection{ Maximal tail length $\TAIL(n)$ }\label{Sec2mu}
One way to approach the conjecture is to consider
the simplest nontrivial case, where the initial
sequence $S_0$ contains only $2$'s and $3$'s, and 
see how far such a sequence can extend using the
rule \eqn{Eq1} before reaching a $1$.
Perhaps if one were sufficiently clever,
one could invent a starting sequence that would
never reach 1, which would disprove the conjecture.
Of course it cannot reach a number greater than 3, either,
for the first time this happens the next term will be 1.
So the sequence must remain bounded between 2 and 3.
Unfortunately, even this apparently simple case 
has resisted our attempts to solve it.
At the end of this section (see \S\ref{Sec2S}) we will mention some slight 
evidence that suggests the conjecture is true. 
First we report on our numerical experiments.

Let $\TAIL(n)$ denote the maximal tail length that can be achieved before
a $1$ appears, for any starting sequence $S_0$ consisting of $n$ $2$'s and $3$'s.
If a 1 is never reached, we set $\TAIL(n) = \infty$.
The curling number conjecture would imply $\TAIL(n) < \infty$
for all $n$. 

By direct search, we have found $\TAIL(n)$ for all $n \le \LIMA$.
(The values for $n \le 30$ were given in \cite{GIJ}.)
The results are shown in Table \ref{Tabmu}
and Figure \ref{Fig1}, together with lower bounds
(which we conjecture are in fact equal to $\TAIL(n)$)
for $\LIMB \le n \le \LIMC$.
The values of $\TAIL(n)$ also form sequence \seqnum{A217208}
in \cite{OEIS}.

\begin{figure}[!h]
\centerline{\includegraphics[angle=270, width=5in]{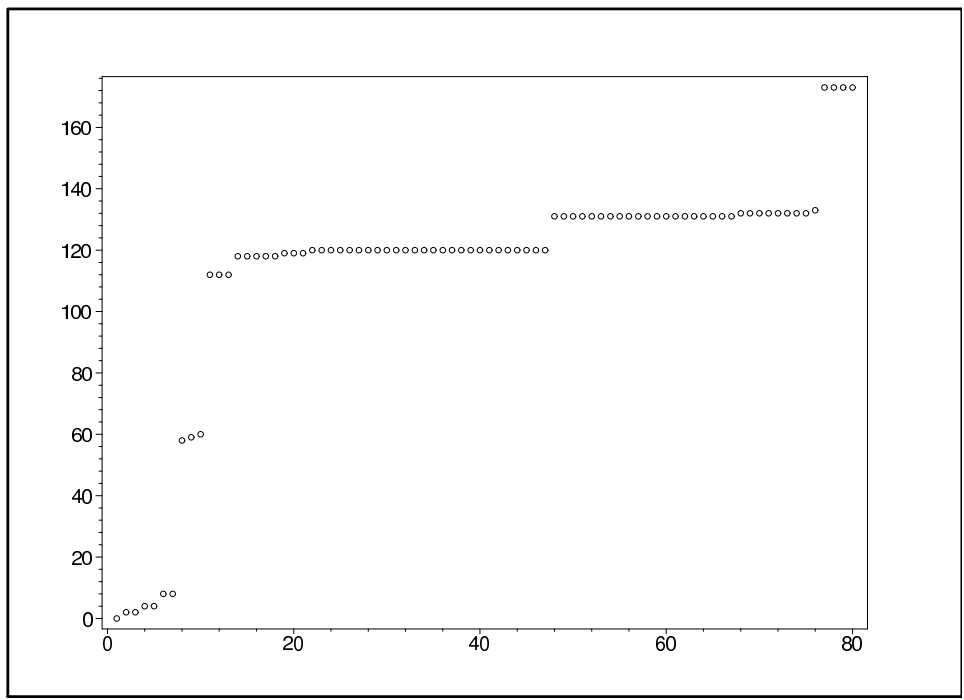}}
\caption{Scatter-plot of lower bounds on $\TAIL(n)$, 
the maximal tail length that can be achieved before
a $1$ appears, for any starting sequence $S_0$ consisting of $n$ $2$'s and $3$'s.
Entries for $n \le \LIMA$ are known to be exact; the other
entries are conjectured to be exact.}
\label{Fig1}
\end{figure}

In \cite{GIJ}, before we began computing $\TAIL(n)$,
we did not know how fast it would grow---would it
be a polynomial, exponential, or other function of $n$?
Even now we still do not know, since we have only limited
data. But up to $n = \LIMA$, and probably up to $n = \LIMC$,
$\TAIL(n)$ is a piecewise constant function of $n$.
There are occasional {\em jump points}, where
$\TAIL(n) > \TAIL(n-1)$, but in between jump points $\TAIL(n)$
does not change.
Of course this piecewise constant behavior is not
incompatible with polynomial or exponential growth,
if the jump points are close enough together, but 
up to $n = \LIMC$ this seems not to be the case. 
There are long stretches where $\TAIL(n)$ is flat.
A probabilistic argument will be given
in \S\ref{Sec4} which suggests (not very convincingly)
that, on the average, $\TAIL(n)$ may be roughly $c_1\,n$,
for a constant $c_1 \approx 1.34$.
Up to $n = \LIMB$, $\TAIL(n)$ never decreases,
although we cannot prove that this is always true (see \S\ref{Sec3r}).

The jump points are at 
$n = 1, 2, 4, 6, 8, 9, 10, 11, 14, 19, 22, 48$
and we believe the next three values are 68, 76 and 77
(\seqnum{A160766}).

\subsection{ Properties of good starting sequences }\label{Sec2P}
  From $n=2$ through $\LIMA$ (and probably through $n=\LIMC$)
the starting sequences $S_0$ which achieve $\TAIL(n)$
at the jump points are unique.
These especially good starting sequences are listed in 
Tables \ref{TabGood1} and \ref{TabGood2}.
For $2 \le n \le \LIMA$ (and probably
for $2 \le n \le \LIMC$)
these sequences $S_0$ also have the following properties:
\begin{list}{}{\setlength{\itemsep}{0.02in}}
\item
(P2) $S_0$ begins with 2.
\item
(P3) $S_0$ does not contain the subword 3\,3.
\item
(P4) $S_0$ contains no nonempty subword of the form $V^4$
(and in particular does not contain 2\,2\,2\,2).
\end{list}

\begin{table}[htb]
$$
\begin{array}{|l|l|}
\hline
n & \mbox{Starting sequence} \\
\hline 
1 & 2 \\
2 & 2\,2 \\
4 & 2\,3\,2\,3 \\
6 & 2\,2\,2\,3\,2\,2 \\
8 & 2\,3\,2\,2\,2\,3\,2\,3 \\
9 & 2\,2\,3\,2\,2\,2\,3\,2\,3 \\
10 & 2\,3\,2\,3\,2\,2\,2\,3\,2\,2 \\
11 & 2\,2\,3\,2\,3\,2\,2\,2\,3\,2\,2 \\
14 & 2\,2\,3\,2\,3\,2\,2\,2\,3\,2\,2\,3\,2\,3 \\
19 & 2\,2\,3\,2\,2\,3\,2\,3\,2\,2\,2\,3\,2\,2\,3\,2\,2\,3\,2 \\
22 & 2\,3\,2\,2\,3\,2\,2\,3\,2\,3\,2\,2\,2\,3\,2\,3\,2\,2\,3\,2\,2\,3 \\
48 & 2\,2\,3\,2\,2\,3\,2\,3\,2\,2\,2\,3\,2\,2\,2\,3\,2\,2\,3\,2\,2\,2\,3\,2\,2\,3\,2\,3\,2\,2\,2\,3\,2\,2\,2\,3\,2\,2\,3\,2\,2\,2\,3\,2\,2\,3\,2\,3 \\
\hline 
\end{array}
$$
\caption{Starting sequences consisting of $n$ 2's and 3's for which $\TAIL(n) > \TAIL(n-1)$,
complete for $1 \le n \le \LIMA$.}
\label{TabGood1}
\end{table}

\begin{table}[htb]
$$
\begin{array}{|l|l|}
\hline
n & \mbox{Starting sequence} \\
\hline
68 & 2\,2\,3\,2\,2\,3\,2\,2\,2\,3\,2\,2\,3\,2\,3\,2\,2\,2\,3\,2\,2\,2\,3\,2\,2\,3\,2\,2\,2\,3\,2\,2\,3\,2\,2\,2\,3\,2\,2\,3\,2\,3\,2\,2\,2\,3\,2\,2 \\
 & \,2\,3\,2\,2\,3\,2\,2\,2\,3\,2\,2\,3\,2\,2\,2\,3\,2\,2\,3\,2 \\
76 & 2\,3\,2\,2\,2\,3\,2\,2\,3\,2\,2\,2\,3\,2\,2\,3\,2\,3\,2\,2\,2\,3\,2\,2\,2\,3\,2\,2\,3\,2\,2\,3\,2\,2\,2\,3\,2\,2\,3\,2\,2\,2\,3\,2\,2\,3\,2\,3 \\
 & \,2\,2\,2\,3\,2\,2\,2\,3\,2\,2\,3\,2\,2\,3\,2\,2\,2\,3\,2\,2\,3\,2\,2\,2\,3\,2\,2\,3 \\
77 & 2\,2\,3\,2\,2\,2\,3\,2\,3\,2\,2\,2\,3\,2\,2\,2\,3\,2\,2\,3\,2\,2\,2\,3\,2\,2\,2\,3\,2\,3\,2\,2\,2\,3\,2\,2\,2\,3\,2\,2\,3\,2\,2\,2\,3\,2\,2\,2 \\
 & \,3\,2\,3\,2\,2\,2\,3\,2\,2\,2\,3\,2\,2\,3\,2\,3\,2\,2\,2\,3\,2\,2\,3\,2\,2\,2\,3\,2\,3 \\
\hline
\end{array}
$$
\caption{Conjectured to be the complete list of starting
sequences of $n$ 2's and 3's for which $\TAIL(n) > \TAIL(n-1)$,
for the range $\LIMB \le n \le \LIMC$.}
\label{TabGood2}
\end{table}

These are empirical observations. However, since they certainly hold
for the first $2^{\LIMB} -1$ choices for $S_0$,
we venture to make the following conjecture:
\begin{conj}\label{Conj33}
If a starting sequence $S_0$ of $2$'s and $3$'s of length $n \ge 2$ achieves
$\TAIL(n)$ with $\TAIL(n) > \TAIL(n-1)$, then $S_0$ is unique
and has properties P2, P3 and P4.
\end{conj}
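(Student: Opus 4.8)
The plan is to derive the three structural properties and the uniqueness claim from a single \emph{prefix-monotonicity} lemma: for any sequence $S$ and any symbol $c$ one has $\CN(c\,S) \ge \CN(S)$, since if $S = X\,Y^k$ with $Y^k$ a suffix then $c\,S = (c\,X)\,Y^k$ still has $Y^k$ as a suffix power of the same exponent. An immediate induction shows that the evolutions of $S$ and of $c\,S$ append identical terms, step for step, and satisfy $S_m = c\,S'_m$ (writing $S'_m$ for the evolution of $S$), up to the first index $m^\ast$ at which the greatest-exponent suffix power of $c\,S'_{m^\ast}$ genuinely reaches back to the leading symbol $c$; at that step the curling number strictly increases and the two evolutions diverge. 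The whole difficulty of the problem is concentrated in this divergence index and the cascade it sets off, which is exactly the mechanism underlying the ``rotten'' phenomenon of \S\ref{Sec3r}.

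With this in hand, P3 and P4 become deletion (``de-powering'') arguments. If a jump-point maximizer $S_0$ of length $n$ contained the subword $3\,3$, write $S_0 = A\,3\,3\,B$ and compare it with $A\,3\,B$ of length $n-1$: establishing $\tau(A\,3\,B) \ge \tau(A\,3\,3\,B)=\TAIL(n)$ would give $\TAIL(n-1)\ge\TAIL(n)$, directly contradicting the jump-point hypothesis $\TAIL(n)>\TAIL(n-1)$. Property P4 is treated identically by deleting one copy from a factor $V^4$, passing from $A\,V^4\,B$ to $A\,V^3\,B$ of length $n-|V|$; for the special case $V=2$ (the excluded pattern $2\,2\,2\,2$) this again lands in length $n-1$ and needs only the jump-point hypothesis, while for longer $V$ the contradiction $\TAIL(n-|V|)\ge\TAIL(n)>\TAIL(n-1)\ge\TAIL(n-|V|)$ requires in addition the monotonicity of $\TAIL$, itself a consequence of Conjecture~\ref{ConjDR}.

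Property P2 is the one case with no length change. If the maximizer began with $3$, I would compare $3\,T$ with $2\,T$; by prefix-monotonicity both agree with the evolution of $T$ until the leading symbol is first used, and the claim to establish is that replacing that leading $3$ by a $2$ cannot decrease the tail—and, at a genuine jump point, cannot leave it unchanged—so that the $2$-initial sequence is the unique optimum. Uniqueness would then follow from the same circle of ideas: two distinct length-$n$ maximizers first differ at some position, and applying prefix-monotonicity to the common suffix they share past that position lets one exchange the differing symbol, producing a sequence that is at least as good and thereby forcing the two candidates to coincide with the canonical $2$-initial form recorded in Tables~\ref{TabGood1} and~\ref{TabGood2}.

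The main obstacle is shared by all four arguments and is in effect the curling number conjecture in miniature. Each exchange—deleting a repeated $3$, de-powering a $V^4$, or swapping a leading symbol—is a bounded, local edit, yet its influence surfaces precisely at the divergence index $m^\ast$, where a long-range repetition may be created or destroyed and then cascade over the remaining (possibly hundreds of) steps of the extension $S^{(e)}$. I have no a priori bound on this cascade, and controlling it appears to demand exactly the kind of global regularity of long extensions that the unavoidability results of \S\ref{Sec2S} only gesture toward; moreover the P4 reduction is interlocked with the still-open monotonicity of $\TAIL$ and hence with Conjecture~\ref{ConjDR}. I would therefore expect a complete proof to run as a simultaneous induction over the jump points, carrying P2, P3, P4, uniqueness, monotonicity, and the absence of doubly rotten sequences as one package, with the explicit block structure of the good starting sequences supplying the base mechanism.
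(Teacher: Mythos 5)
You should first note that the statement in question is Conjecture~\ref{Conj33} of the paper: the authors do not prove it, they only verify it empirically (exhaustively for $n \le \LIMA$, and by a search restricted by properties P3 and P4 for $\LIMB \le n \le \LIMC$), and they explicitly warn that even the conjectured jump points beyond $\LIMA$ could be wrong if maximizers violating P3 or P4 exist. The only rigorous result the paper extracts in this direction is the far weaker Lemma~\ref{LemWeak} (a jump-point maximizer cannot be weak). So there is no proof in the paper to compare against; the question is whether your proposal closes the conjecture, and by your own candid admission in the final paragraph it does not.

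The concrete gaps are these. Every one of your four arguments hinges on an unproven claim of the form $\tau(\text{edited sequence}) \ge \tau(S_0)$: you need $\tau(A\,3\,B) \ge \tau(A\,3\,3\,B)$ for P3, $\tau(A\,V^3\,B) \ge \tau(A\,V^4\,B)$ for P4, and analogous swap claims for P2 and uniqueness, and none of these is established or even made plausible for arbitrary $A$, $B$. Your prefix-monotonicity lemma ($\CN(cS) \ge \CN(S)$, which is correct and complementary to Theorem~\ref{Lem78.4}) gives no control here: it concerns a single curling-number evaluation, and the paper's own data shows such control does not transfer to tail lengths --- rotten sequences (Table~\ref{TabRott}) are precisely sequences for which prefixing one symbol strictly \emph{decreases} $\tau$, so even the most innocuous edit can destroy a long tail, and an edit in the interior of $S_0$ is worse still. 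Moreover your P4 reduction additionally needs $\TAIL(n-1) \ge \TAIL(n-|V|)$, i.e., monotonicity of $\TAIL$, which the paper states is not known and is only conjectured to follow from Conjecture~\ref{ConjDR}, itself open. In sum, your proposal reduces one open conjecture to a bundle of other open claims (de-powering inequalities, monotonicity of $\TAIL$, Conjecture~\ref{ConjDR}) without closing any of them; it is a reasonable map of where the difficulty lies, but it is not a proof, and none is currently known.
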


We can at least prove one result about these especially
good starting sequences.
Let $S_0 \, = \, s_1 \, s_2 \, \cdots \, s_n$
be any sequence of integers with extension
$S^{(e)} = S_t = s_1 \, \cdots \ s_{n+t}$, where $\CN(S_t)=1$.
Call $S_0$ {\em weak} if each $S_r$ ($r=0, \ldots,t-1$)
can be written as $X  Y^{s_{n+r+1}}$ with $X \ne \EMPTY $.
In other words, $S_0$ is weak if the initial term $s_1$ is
not necessary for the computation
of the curling numbers $s_{n+1}, \ldots, s_{n+t}$.
This implies that $\tau(S_0) = \tau(s_2 \, \cdots \, s_n)$,
and establishes
\begin{lem}\label{LemWeak}
If a starting sequence $S_0$ of length $n \ge 2$ achieves
$\TAIL(n) > \TAIL(n-1)$, then $S_0$ is not weak.
\end{lem}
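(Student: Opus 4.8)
The plan is to argue by contradiction using the definition of weakness and the characterization of $\TAIL$ as a maximum over starting sequences. Suppose for contradiction that $S_0 = s_1 s_2 \cdots s_n$ is a starting sequence of length $n \ge 2$ that achieves $\TAIL(n)$ with $\TAIL(n) > \TAIL(n-1)$, but that $S_0$ \emph{is} weak. The key observation, which is exactly the content of the remark preceding the lemma, is that weakness of $S_0$ forces $\tau(S_0) = \tau(s_2 \cdots s_n)$: since the leading term $s_1$ is never used in computing any of the curling numbers $s_{n+1}, \ldots, s_{n+t}$, deleting it leaves the entire tail of appended curling numbers unchanged. First I would make this reduction precise by induction on $r$, showing that for each $r = 0, \ldots, t-1$ the curling number $\CN(s_2 \cdots s_{n+r})$ of the truncated sequence equals $\CN(s_1 \cdots s_{n+r}) = s_{n+r+1}$.

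The inductive step is where the definition of weakness does its work. For a fixed $r$, weakness gives a decomposition $S_r = s_1 \cdots s_{n+r} = X Y^{s_{n+r+1}}$ with $X \ne \EMPTY$, so $s_1$ lies inside the prefix $X$ and not inside the repeated block $Y^{s_{n+r+1}}$. Deleting $s_1$ therefore leaves a sequence $s_2 \cdots s_{n+r} = X' Y^{s_{n+r+1}}$, where $X'$ is $X$ with its first term removed, and this still exhibits a suffix power of exponent $s_{n+r+1}$. Hence $\CN(s_2 \cdots s_{n+r}) \ge s_{n+r+1}$; to get equality I would note that the curling number can only drop (not rise) when a term is removed from the front, or alternatively invoke that the two sequences share the same suffix structure so any longer suffix power of $s_2 \cdots s_{n+r}$ would also be a suffix power of the original, contradicting that $s_{n+r+1}$ was the curling number of $S_r$. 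This equality of curling numbers at every stage propagates, so the truncated sequence $s_2 \cdots s_n$, of length $n-1$, generates exactly the same appended terms and reaches its first $1$ at the same step, giving $\tau(s_2 \cdots s_n) = \tau(S_0) = \TAIL(n)$.

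Now the contradiction is immediate from the definition of $\TAIL$. The truncated sequence $s_2 \cdots s_n$ is a sequence of $2$'s and $3$'s (more generally, of integers, but here of the same alphabet) of length $n-1$, so by definition of $\TAIL(n-1)$ as the maximal tail length over all such sequences we have $\tau(s_2 \cdots s_n) \le \TAIL(n-1)$. Combining with the previous paragraph yields $\TAIL(n) = \tau(S_0) = \tau(s_2 \cdots s_n) \le \TAIL(n-1)$, which contradicts the hypothesis $\TAIL(n) > \TAIL(n-1)$. Therefore $S_0$ cannot be weak, establishing the lemma.

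I expect the main obstacle to be the rigorous verification of the inductive step, specifically the claim that removing the leading term cannot increase the curling number at any intermediate stage. The decomposition $X Y^{s_{n+r+1}}$ with $X \ne \EMPTY$ guarantees the truncated sequence retains at least exponent $s_{n+r+1}$, so the only real content is ruling out a strictly larger suffix power appearing after deletion; this is where one must argue carefully that a suffix power of $s_2 \cdots s_{n+r}$ of exponent exceeding $s_{n+r+1}$ would, because it is a suffix, already be a suffix power of $s_1 \cdots s_{n+r}$ of the same exponent, contradicting the definition of $s_{n+r+1}$ as $\CN(S_r)$. Once this is settled the rest is a clean bookkeeping argument, and the crucial inequality $\tau(s_2 \cdots s_n) \le \TAIL(n-1)$ follows directly from the definition of $\TAIL$ without any further computation.
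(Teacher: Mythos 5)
Your proof is correct and follows essentially the same route as the paper: the paper's entire argument is the remark preceding the lemma, namely that weakness forces $\tau(S_0)=\tau(s_2\cdots s_n)$, which immediately contradicts $\TAIL(n)>\TAIL(n-1)$ since $\tau(s_2\cdots s_n)\le\TAIL(n-1)$. You simply make explicit the induction (truncation cannot raise the curling number, weakness guarantees it cannot lower it) that the paper leaves implicit.
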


One further empirical observation is worth recording,
concerning the starting sequences between
jump points. Suppose $n_0$, $n_1$ are
consecutive jump points, so that
$$
\TAIL(n) ~=~ \TAIL(n-1) \quad \mbox{~for~} n_0 < n < n_1 \,,
$$
and $\TAIL(n) > \TAIL(n-1)$ at $n=n_0$ and $n_1$.
Then for $n \le \LIMA$ and conjecturally for $n \le \LIMC$,
if $n_0 < n < n_1$, one
can obtain a starting sequence that achieves $\TAIL(n)$
by taking the starting sequence of length $n_0$
and prefixing it by a ``neutral'' string of $n-n_0$
$2$'s and $3$'s
that do not get used in the computation of $\TAIL(n)$.
Although this is not surprising, we are unable to
prove that such neutral prefixes must always exist.
We return to this topic in \S\ref{Sec3r}.

The large gaps between the jump points at 22 and 48 and between 48 and 68 are
especially noteworthy.
In particular, we have
\beql{Eq3}
\TAIL(n) = 120 \mbox{~for~} 22 \le n \le 47 \,,
\eeq
and, conjecturally,
\beql{Eq49}
\TAIL(n) = 131 \mbox{~for~} 48 \le n \le 67 \,.
\eeq

The data shown in Tables \ref{Tabmu}, \ref{TabGood1}, \ref{TabGood2}
and Figure \ref{Fig1} for $n$ in the range $\LIMB$ to $\LIMC$
were obtained by computer search under
the assumption that the starting sequence has the properties
P3 and P4 mentioned above,
although without making any assumption about uniqueness. 
As it turned out, assuming P3 and P4, the best
starting sequences at the jump points are indeed unique
and start with 2.
Assuming P3 and P4 greatly reduces the number of starting sequences 
that must be considered.
For example, simply excluding sequences that contain
four consecutive $2$'s or four consecutive $3$'s
reduces the number of candidates of length $n$ from $2^n$ 
to a constant times $c_{2} ^n$, where $c_{2} = 1.839\cdots$
(cf. \seqnum{A135491}).
However, this by itself is not enough to enable us to reach $n=\LIMC$.
We discuss the algorithm that we used in more detail in \S\ref{Sec2D}.

We should emphasize that in the (we believe unlikely)
event that there are starting sequences of length $n$
with $\LIMB \le n \le \LIMC$ that achieve $\TAIL(n)$ but do not satisfy properties P3 and P4,
it is possible our conjecture that there
are jump points at lengths 68, 76, and 77 may be wrong,
and there may be better starting sequences
than those shown in Table \ref{TabGood2}.

\subsection{ A construction for larger $n$ }\label{Sec2C}
We have not succeeded in finding any algebraic constructions
for good starting sequences.  However,
one simple construction enables us to
obtain lower bounds on $\TAIL(n)$ for some larger values of $n$.
Let $S_0$ be a sequence of length $n$ that achieves $\TAIL(n)$,
and let $S^{(e)}$ be its extension of length $n+\TAIL(n)$.
Then in some cases the starting sequence $S^{(e)}  S_0$
will extend to $S^{(e)}  S^{(e)}  2$ and beyond before reaching a $1$.
For example, taking $S_0$ to be the length $48$ 
sequence in Table \ref{TabGood1},
the sequence $S^{(e)} S_0$ has length $179+48=227$ and extends to 
a total length of $596$ before reaching a $1$,
showing that $\TAIL(227) \ge 369$.

\subsection{ Computational details }\label{Sec2D}
Our results are complete for $n \le \LIMA$ and are probably complete through
$n = \LIMC$.
In order to extend the search this far,
the algorithms used were specifically tuned to the case of 
sequences of 2's and 3's. 
There is no easy way (as far as we know) to avoid the basic process of 
computing the extension of $S$ (compute $\CN(S)$, 
append it to $S$, and repeat until $\CN(S)=1$), and 
so the focus is on computing $\CN(S)$ quickly. 
In the following discussion we assume that $S$ has length at least 36.
The first step is brute force: look up the curling number $\CN(s_{n-35}\,\cdots\,s_n)$
in a table. Two bits are sufficient to record $\CN$,
since we only care about whether it is 1, 2, 3 or $\ge 4$; 
at two bits per entry, this table occupies 16 gigabytes. 
This provides a lower bound on $\CN(S)$, 
and also gives a lower bound for the length of the 
repeated substring $Y$ which maximizes $\CN(S)$. 
For example, if 
$\CN(s_{n-35}\,\cdots\,s_n)=1$,
then any $Y$ which gives $\CN(S)>1$ must be at least 19 digits long, 
or there would have been two copies within the last 36 digits of S.

There is also an upper bound on the length of $Y$. 
Since we are looking for that $Y$ which maximizes $\CN(S)$, 
we are only interested in $Y$'s which could be repeated more times 
than the current best known value of $\CN(S)$. 
For example, if we know $\CN(S) \ge 3$, 
then we only want a $Y$ which is repeated four times, 
and so we only need consider lengths up to the length of $S$ divided by 4.

We now consider the last $n$ digits of $S$ as a candidate for $Y$, 
for all values of $n$ between the lower and upper bounds.
The sequences are represented as 128-bit binary numbers, 
and so looking for repetitions of $Y$ can be done with bit manipulation. 
A few shifts and OR's generate 4 copies of $Y$, 
or as many as will fit in 128 bits. 
Then an XOR finds digits in which this differs from $S$, 
a bit scan locates the index of the first difference, 
and we can divide by the length of $Y$ to find how many times this 
$Y$ is repeated. (In fact, all divisions are done with precomputed tables.) 
If some $Y$ increases the best known value for $\CN(S)$, 
then the upper bound on the length of $Y$ can be revised downwards. 
If we reach 128 digits and are still going, 
we resort to a slow string-based routine. 
In practice this slow routine accounts for less than 
1\% of the program's execution time.

To compute the conjectured values up to length 80, we exclude (most) 
strings containing 3\,3 or a subword $V^4$. 
Obviously we cannot check all $2^{80}$ strings to see if they 
violate one of these conditions, 
so we need an efficient way to avoid considering them at all. 
To do this, we compute a $256 \times 256$ table which lists, 
for every string of length 8, all the length-8 strings which could legally follow it. 
We then construct $S$ recursively in 8-digit blocks, 
ensuring that the rules are not broken within any two consecutive blocks. 
This is not perfect (it will allow a $V^4$ to slip by if $V$ is 9 digits long, 
for example), 
but it efficiently eliminates the vast majority of undesirable cases.

\subsection{ Unavoidable regularities }\label{Sec2S}
One reason we think the curling number conjecture may be true,
at least in the special case of sequences of 2's and 3's,
is that there are several theorems in formal
language theory about the inevitability of regularities
in long binary strings. A classical example is Shirshov's theorem
\cite[Theorem 7.1.4]{Loth}, \cite[Theorem 2.4.3]{LuVa}.
Unfortunately that does not quite do what we need, but it does 
offer hope that a proof along these lines may exist.
Lyndon's theorem \cite[p.~67]{Loth} is another example.
Suppose we have a very long sequence of 2's and 3's generated
by \eqn{Eq1}, and consider its canonical decomposition
into Lyndon words. There are relatively few
Lyndon words that are possible (e.g., 2\,2\,2\,2 is forbidden),
but since this attack has not yet led to a contradiction
we shall say no more about it.

\section{ Number of binary sequences with given curling number }\label{Sec3C}

In this section we study the number $c(n,k)$ of binary sequences of length $n$
and curling number $k$. 
For consistency with the other sections, we 
continue to consider sequences of 2's and 3's,
although for this question any alphabet of size 2 (such as \{0,1\})
would do equally well.

\subsection{ Primitive and robust sequences }\label{SecCa}
A sequence $S$ is {\em imprimitive} (or {\em periodic}) if it is equal to $T^i$ for
some sequence $T$ and an integer $i \ge 2$. Otherwise, $S$ is {\em primitive}  \cite[p.~7]{Loth}.

\begin{lem}\label{LemPi}
Suppose $S$ has curling number $k$. Then $S$ can be written as
$X Y^k$, possibly in several ways. The shortest such $Y$ is primitive and unique,
and has curling number $<k$ if $k>1$, curling number $1$ if $k=1$.
\end{lem}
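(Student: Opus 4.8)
The plan is to work directly from the definition of $\CN(S)=k$ as the greatest exponent of a power occurring as a suffix of $S$, and to make decisive use of the minimality of $|Y|$ at the one genuinely delicate step. Throughout, let $Y$ denote a shortest sequence for which $Y^k$ is a suffix of $S$; at least one such $Y$ exists since $\CN(S)=k$. First I would dispose of primitivity. If $Y$ were imprimitive, say $Y=Z^j$ with $j\ge 2$, then $Y^k=Z^{jk}$ would be a suffix of $S$ with exponent $jk\ge 2k>k$, forcing $\CN(S)\ge jk>k$ and contradicting $\CN(S)=k$. Hence $Y$ is primitive. (This uses maximality of $k$ rather than minimality of $|Y|$.) Uniqueness is then a pure length argument: if $Y_1$ and $Y_2$ are both shortest, they share a common length $\ell$, so $Y_1^k$ and $Y_2^k$ are both suffixes of $S$ of length $k\ell$; two suffixes of $S$ of equal length coincide, whence $Y_1^k=Y_2^k$, and comparing the last $\ell$ symbols gives $Y_1=Y_2$. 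No appeal to Fine--Wilf is required here.

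For the curling-number claim with $k>1$, set $m=\CN(Y)$ and argue by contradiction, assuming $m\ge k$ (so $m\ge 2$). Applying the structure just established to $Y$ itself, write $Y=A\,B^m$, where $B^m$ is the greatest-exponent power suffix of $Y$ and $B$ is its shortest (hence primitive) root. Since $Y$ is primitive and $m\ge 2$, the prefix $A$ cannot be empty (otherwise $Y=B^m$ would be imprimitive); in particular $|B|<|Y|$. Now $B^m$ is a suffix of $Y$, $Y$ is a suffix of the last factor of $Y^k$, and $Y^k$ is a suffix of $S$, so $B^m$ is a suffix of $S$. Therefore $\CN(S)\ge m\ge k=\CN(S)$, which forces $m=k$. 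But then $B^k$ is a suffix of $S$ with $|B|<|Y|$, contradicting the minimality of $|Y|$. Hence $m<k$, as claimed.

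The step I expect to be the main obstacle is precisely this last one, and the subtlety is that \emph{one must insist $Y$ be the shortest root}. The naive statement fails for arbitrary roots: for example $S=3\,2\,2\,3\,2\,2$ has $\CN(S)=2$ with root $3\,2\,2$, whose own curling number is $2=k$, yet the shortest root is $2$, of curling number $1$. The contradiction above is engineered to manufacture a \emph{strictly shorter} root $B$ achieving the same exponent $k$, which is exactly what minimality forbids; getting the bookkeeping of $A$, $B$ and the suffix relations right is where the care is needed.

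Finally, for $k=1$ the condition $\CN(S)=1$ says that no suffix of $S$ is a proper power. The shortest $Y$ with $Y^1=Y$ a suffix is then the single last symbol of $S$, which is primitive and has curling number $1$, since a one-symbol sequence cannot be written as $X\,Y^k$ with $X$ possibly $\EMPTY$ and $k\ge 2$. Assembling the four pieces---existence, primitivity, uniqueness, and the value of $\CN(Y)$---completes the proof.
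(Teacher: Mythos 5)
Your proposal is correct and follows essentially the same route as the paper's proof: primitivity from the maximality of $k$ (an imprimitive $Y=Z^j$ would force $\CN(S)\ge jk>k$), uniqueness from the fact that equal-length suffixes of $S$ coincide, and the bound $\CN(Y)<k$ by decomposing $Y$ around its own maximal power suffix and deriving a contradiction with the minimality of $|Y|$ (the paper compresses your two-step ``force $m=k$, then contradict minimality'' into a single line, but the mechanism is identical). Your explicit counterexample $S=3\,2\,2\,3\,2\,2$ showing that minimality of the root is essential is a nice addition, but the substance of the argument is the same.
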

\begin{proof}
Consider all possible ways of writing $S=X Y^k$, and let $\sY$ denote
the set of such $Y$'s of minimal length.
Every $Y \in \sY$ is primitive, for if $Y=T^i$, $i \ge 2$, then
$S = X T^{ik}$, and $\CN(S) \ge ik > k$,
contradicting the definition of $\sY$.
To establish uniqueness, we observe that $S = X_1 {Y_1}^k = X_2 {Y_2}^k$
with $|Y_1| = |Y_2|$ implies $Y_1=Y_2$.
If $k>1$ and $Y \in \sY$ has curling number 
$c \ge k > 1$, say $Y = U V^c$, $|V| \ge 1$,
then $S = X(UV^c)^k = X' V^c$ with $c \ge k$, $|V| < |Y|$, a contradiction.
Finally, if $k=1$, certainly $Y$ cannot have
curling number greater than 1, or $S$ would too.
\end{proof}
We denote the length of this shortest $Y$ by $\pi$.
We let $\sC(n,k,\pi)$ (for $n \ge 1$, $1 \le k \le n$, $1 \le \pi \le n$) denote the set of all $S$ with the
given values of $n$, $k$, and $\pi$,
$c(n,k,\pi) := \#\sC(n,k,\pi)$,
$\sC (n,k) := \bigcup_{\pi=1}^{\lfloor n/k \rfloor} \sC(n,k,\pi)$,
and $c(n,k) := \#\sC(n,k) = \sum_{\pi=1}^{\lfloor n/k \rfloor} c(n,k,\pi)$.

If $S$ has curling number 1 then the shortest $Y$ for
which $S = XY$ is simply the last term of $S$, so $\pi=1$
and $S \in \sC(n,1,1)$. The sets $\sC(n,1,\pi)$ for $\pi > 1$ are empty.

We let $\sP(n,k)$ (for $1 \le k \le n$) denote the subset of primitive $S \in \sC(n,k)$,
and $p(n,k) := \#\sP(n,k)$.
Note that $\sC(n,1) = \sP(n,1)$, since curling number 1 implies primitive.

Also let $\sQ (n,k) := \bigcup_{i=1}^{k} \sP(n,i)$ (for $1 \le k \le n$) denote
the set of primitive sequences with curling
number at most $k$, and 
$q(n,k) := \#\sQ(n,k)  = \sum_{i=1}^{k} p(n,i)$.
We also set $q(n,0) := 0$ and $q(n,k) := q(n,n)$ for $k>n$.
By definition, $q(n,n)$ is the total
number of aperiodic binary sequences of length $n$,
and it is well known (\cite{GR61}; see also entry \seqnum{A217943} in \cite{OEIS}) 
 that
\beql{Eqalpha}
q(n,n) ~=~ \sum_{d \mid n} \mu \left(\frac{n}{d} \right) 2^d \,,
\eeq
where $\mu$ is the M\"obius function ($q(n,n)$ is sequence \seqnum{A027375}).

Call $S \in \sP (n,k)$ {\em robust} if no proper suffix
of $S^{k+1}$ has curling number $\ge k+1$.
Examples of non-robust sequences first appear
at length 5, where $S ~=~ 3\,2\,2\,3\,2
\in \sC(5,1)$ is not robust since
$$
S^2 ~=~ 3\,2\,2\,3\,2 ~ 3\,2\,2\,3\,2
$$
has the suffix $(2\,3\,2)^2$.
At length 8 there are examples with $k=2$, such as $S ~=~ 3\,2\,2\,3\,2\,2\,3\,2$,
for which $S^3$ has the suffix $(2\,3\,2)^3$.
Let $\sP'(n,k)$ denote the subset of robust $S \in \sP(n,k)$,
and let $p'(n,k) := \#\sP'(n,k)$.

Tables \ref{Tabcnk}, \ref{Tabpnk}, \ref{Tabqnk},
and \ref{Tabppnk} show the initial values
of $c(n,k)$, $p(n,k)$, $q(n,k)$, and $p'(n,k)$, respectively.
There are far fewer non-robust sequences than robust, and their
numbers are shown in Table \ref{Tabsnk}.

\begin{table}[thbp]
$$
\begin{array}{|r|rrrrrrrrrrrr|}
\hline
n\backslash k & 1 & 2 & 3 & 4 & 5 & 6 & 7 & 8 & 9 & 10 & 11 & 12 \\
\hline
1 & 2 &   &   &   &   &   &   &   &   &    &    &   \\
2 & 2 & 2 &   &   &   &   &   &   &   &    &    &   \\
3 & 4 & 2 & 2 &   &   &   &   &   &   &    &    &   \\
4 & 6 & 6 & 2 & 2 &   &   &   &   &   &    &    &   \\
5 & 12 & 12 & 4 & 2 & 2 &   &   &   &   &    &    &   \\
6 & 20 & 26 & 10 & 4 & 2 & 2 &   &   &   &   &    & \\
7 & 40 & 52 & 20 & 8 & 4 & 2 & 2 &   &   &   &   &   \\
8 & 74 & 110 & 38 & 18 & 8 & 4 & 2 & 2 & & & & \\
9 & 148 & 214 & 82 & 36 & 16 & 8 & 4 & 2 & 2 & & & \\
10 & 286 & 438 & 164 & 70 & 34 & 16 & 8 & 4 & 2 & 2 & &\\
11 & 572 & 876 & 328 & 140 & 68 & 32 & 16 & 8 & 4 & 2 & 2 &\\
12 & 1124 & 1762 & 660 & 286 & 134 & 66 & 32 & 16 & 8 & 4 & 2 & 2\\
\hline
\end{array}
$$
\caption{Table of $c(n,k)$, the number of binary sequences of length $n$
and curling number $k$, for $1 \le k \le n$ and $n \le 12$ 
(for an extended table see \seqnum{A216955}).}
\label{Tabcnk}
\end{table}

\begin{table}[thbp]
$$
\begin{array}{|r|rrrrrrrrrrrr|}
\hline
n\backslash k & 1 & 2 & 3 & 4 & 5 & 6 & 7 & 8 & 9 & 10 & 11 & 12 \\
\hline
1 & 2 &  &  &  &  &  &  &  &  &  &  &  \\
2 & 2 & 0  & &  &  &  &  &  &  &  &  &  \\
3 & 4 & 2 & 0  & &  &  &  &  &  &  &  &  \\
4 & 6 & 4 & 2 & 0  & &  &  &  &  &  &  &  \\
5 & 12 & 12 & 4 & 2 & 0  & &  &  &  &  &  &  \\
6 & 20 & 20 & 8 & 4 & 2 & 0  & &  &  &  &  &  \\
7 & 40 & 52 & 20 & 8 & 4 & 2 & 0  & &  &  &  &  \\
8 & 74 & 100 & 36 & 16 & 8 & 4 & 2 & 0  & &  &  &  \\
9 & 148 & 214 & 76 & 36 & 16 & 8 & 4 & 2 & 0  & &  &  \\
10 & 286 & 414 & 160 & 68 & 32 & 16 & 8 & 4 & 2 & 0  & &  \\
11 & 572 & 876 & 328 & 140 & 68 & 32 & 16 & 8 & 4 & 2 & 0  & \\
12 & 1124 & 1722 & 640 & 276 & 132 & 64 & 32 & 16 & 8 & 4 & 2 & 0 \\
\hline
\end{array}
$$
\caption{Table of $p(n,k)$, the number of primitive binary sequences of length $n$
and curling number $k$, for $1 \le k \le n$ and $n \le 12$ 
(for an extended table see \seqnum{A218869}).}
\label{Tabpnk}
\end{table}

\begin{table}[thbp]
$$
\begin{array}{|r|rrrrrrrrrrrr|}
\hline
n\backslash k & 1 & 2 & 3 & 4 & 5 & 6 & 7 & 8 & 9 & 10 & 11 & 12 \\
\hline
1 & 2 & & & & & & & & & & & \\
2 & 2 & 2 & & & & & & & & & & \\
3 & 4 & 6 & 6 & & & & & & & & & \\
4 & 6 & 10 & 12 & 12 & & & & & & & & \\
5 & 12 & 24 & 28 & 30 & 30 & & & & & & & \\
6 & 20 & 40 & 48 & 52 & 54 & 54 & & & & & & \\
7 & 40 & 92 & 112 & 120 & 124 & 126 & 126 & & & & & \\
8 & 74 & 174 & 210 & 226 & 234 & 238 & 240 & 240 & & & & \\
9 & 148 & 362 & 438 & 474 & 490 & 498 & 502 & 504 & 504 & & & \\
1 & 286 & 700 & 860 & 928 & 960 & 976 & 984 & 988 & 990 & 990 & & \\
11 & 572 & 1448 & 1776 & 1916 & 1984 & 2016 & 2032 & 2040 & 2044 & 2046 & 2046 & \\
12 & 1124 & 2846 & 3486 & 3762 & 3894 & 3958 & 3990 & 4006 & 4014 & 4018 & 4020 & 4020 \\
\hline
\end{array}
$$
\caption{Table of $q(n,k)$, the number of primitive binary sequences of length $n$
and curling number at most $k$, for $1 \le k \le n$ and $n \le 12$ 
(for an extended table see \seqnum{A218870}).}
\label{Tabqnk}
\end{table}

\begin{table}[thbp]
$$
\begin{array}{|r|rrrrrrrrrrrr|}
\hline
n\backslash k & 1 & 2 & 3 & 4 & 5 & 6 & 7 & 8 & 9 & 10 & 11 & 12 \\
\hline
1 & 2 & & & & & & & & & & & \\
2 & 2 & 0 & & & & & & & & & & \\
3 & 4 & 2 & 0  & & & & & & & & & \\
4 & 6 & 4 & 2 & 0  & & & & & & & & \\
5 & 10 & 12 & 4 & 2 & 0  & & & & & & & \\
6 & 20 & 20 & 8 & 4 & 2 & 0  & & & & & & \\
7 & 36 & 52 & 20 & 8 & 4 & 2 & 0  & & & & & \\
8 & 72 & 98 & 36 & 16 & 8 & 4 & 2 & 0  & & & & \\
9 & 142 & 214 & 76 & 36 & 16 & 8 & 4 & 2 & 0  & & & \\
10 & 280 & 414 & 160 & 68 & 32 & 16 & 8 & 4 & 2 & 0  & & \\
11 & 560 & 870 & 326 & 140 & 68 & 32 & 16 & 8 & 4 & 2 & 0  & \\
12 & 1114 & 1720 & 640 & 276 & 132 & 64 & 32 & 16 & 8 & 4 & 2 & 0 \\
\hline
\end{array}
$$
\caption{Table of $p'(n,k)$, the number of robust primitive binary sequences of length $n$
and curling number $k$, for $1 \le k \le n$ and $n \le 12$ 
(for an extended table see \seqnum{A218875}).}
\label{Tabppnk}
\end{table}

\begin{table}[thbp]
$$
\begin{array}{|r|rrrrrrrrrrrr|}
\hline
n\backslash k & 1 & 2 & 3 & 4 & 5 & 6 & 7 & 8 & 9 & 10 & 11 & 12 \\
\hline
1 & 0 & & & & & & & & & & & \\
2 & 0 & 0  & & & & & & & & & & \\
3 & 0 & 0 & 0  & & & & & & & & & \\
4 & 0 & 0 & 0 & 0  & & & & & & & & \\
5 & 2 & 0 & 0 & 0 & 0  & & & & & & & \\
6 & 0 & 0 & 0 & 0 & 0 & 0  & & & & & & \\
7 & 4 & 0 & 0 & 0 & 0 & 0 & 0  & & & & & \\
8 & 2 & 2 & 0 & 0 & 0 & 0 & 0 & 0  & & & & \\
9 & 6 & 0 & 0 & 0 & 0 & 0 & 0 & 0 & 0  & & & \\
10 & 6 & 0 & 0 & 0 & 0 & 0 & 0 & 0 & 0 & 0  & & \\
11 & 12 & 6 & 2 & 0 & 0 & 0 & 0 & 0 & 0 & 0 & 0  & \\
12 & 10 & 2 & 0 & 0 & 0 & 0 & 0 & 0 & 0 & 0 & 0 & 0 \\
\hline
\end{array}
$$
\caption{The numbers $p(n,k)-p'(n,k)$ of non-robust primitive 
binary sequences of length $n$
and curling number $k$, for $1 \le k \le n$ and $n \le 12$ 
(for an extended table see \seqnum{A218876}).}
\label{Tabsnk}
\end{table}

\subsection{ Three preliminary theorems }\label{SecCb}

The classical Fine-Wilf theorem 
(\cite{FW65}; \cite[p.~13]{AS2003}, \cite{LS67}, \cite[p.~10]{Loth}) 
turns out to be very useful for studying curling numbers.

\begin{thm}\label{ThFW}
$($Fine and Wilf$)$
If sequences $S = X^i$ and $T = Y^j$ have a common suffix $U$
of length 
\beql{EqFW}
|U| ~\ge~ |X| ~+~ |Y| ~-~ \gcd(|X|,|Y|)\,,
\eeq
then, for some sequence $Z$ and integers $g$, $h$, we have
$X=Z^g$, $Y=Z^h$, $|Z| = \gcd(|X|,|Y|)$.
\end{thm}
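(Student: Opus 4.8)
The plan is to reduce the statement to the classical \emph{periodicity} form of the Fine--Wilf theorem and then read off the structural conclusion. Write $p = |X|$, $q = |Y|$, $d = \gcd(p,q)$, and $\ell = |U|$. First I would observe that the common suffix $U$ inherits both periods. Since $U$ is a suffix of $X^i$, two positions of $U$ that are $p$ apart occupy corresponding places in consecutive copies of $X$, so $U_m = U_{m+p}$ for every valid $m$; that is, $U$ has period $p$. The same argument applied to $Y^j$ shows that $U$ has period $q$. Thus the single word $U$, of length $\ell \ge p + q - d$, carries two periods $p$ and $q$, and the entire content of the theorem is concentrated in this one word.

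The heart of the matter is then the periodicity lemma: a word of length at least $p + q - \gcd(p,q)$ having periods $p$ and $q$ also has period $d$. I would prove this by induction on $p + q$ via the Euclidean algorithm. Assuming $p > q$ (the case $p = q$ being immediate, and $q \mid p$ giving period $q = d$ directly), the key sub-claim is that $U$ then also has period $p - q$. Granting this, one checks that the pair $(p - q, q)$ still satisfies the length hypothesis, since $(p-q) + q - \gcd(p-q,q) = p - d \le \ell$, while $\gcd(p-q,q) = d$; so the induction hypothesis yields period $d$. To prove the sub-claim at an index $i$ with $i + (p-q) \le \ell$, I would chain one step of period $p$ with one step of period $q$: when $i + p \le \ell$ one has $U_i = U_{i+p} = U_{i+p-q}$, and when $i > q$ one instead routes downward, $U_i = U_{i-q} = U_{i-q+p} = U_{i+p-q}$.

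The hard part will be exactly this sub-claim at the boundary. The bound $\ell \ge p + q - d$ is tight, and for the residual indices with $i + p > \ell$ and $i \le q$ neither of the two single two-step routes above has a valid intermediate position inside $\{1,\dots,\ell\}$; such indices force a genuine Euclidean descent through several jumps of sizes $p$ and $q$, and the whole difficulty is to verify that at least one such chain never steps outside the interval. Equivalently, one must show that the ``jump graph'' on $\{1,\dots,\ell\}$ with edges joining positions $p$ apart and positions $q$ apart connects all positions lying in a common residue class modulo $d$, and this connectivity holds precisely because $\ell \ge p + q - d$. This boundary bookkeeping is the only genuinely delicate point, and it is where the exact constant $p + q - \gcd(p,q)$ is consumed.

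Finally I would extract the structural statement. Since $d \le \min(p,q)$, the hypothesis gives $\ell \ge p + q - d \ge \max(p,q)$, so the length-$p$ suffix of $U$ is exactly the final block $X$ of $X^i$, and the length-$q$ suffix is exactly the final block $Y$ of $Y^j$. Let $Z$ be the length-$d$ suffix of $U$. Because $U$ has period $d$ and $d \mid p$, reading the last $p$ symbols of $U$ in blocks of length $d$ from the right shows that the length-$p$ suffix equals $Z^{p/d}$, whence $X = Z^{g}$ with $g = p/d$; likewise $Y = Z^{h}$ with $h = q/d$, and $|Z| = d = \gcd(|X|,|Y|)$, as required. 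Both powers use the \emph{same} $Z$ precisely because $X$ and $Y$ are nested suffixes of one $d$-periodic word.
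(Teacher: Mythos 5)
Your architecture is the right one, and two of its three steps are sound: the reduction (the common suffix $U$ inherits periods $p=|X|$ and $q=|Y|$, so everything rests on the periodicity lemma) and the final extraction (since $|U|\ge p+q-d\ge\max(p,q)$ with $d=\gcd(p,q)$, the last $p$ letters of $U$ are exactly $X$ and the last $q$ letters are exactly $Y$, and $d$-periodicity of $U$ together with $d\mid p$, $d\mid q$ forces $X=Z^{p/d}$, $Y=Z^{q/d}$ for the common length-$d$ suffix $Z$). For calibration: the paper offers no proof of this statement at all — it is quoted as the classical Fine--Wilf theorem with references — so the only question is whether your argument stands on its own. It does not yet, because the central lemma is left unproved exactly where its content lies.

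Concretely, your induction needs the sub-claim that $U$ itself has period $p-q$, and your two routes cover only the indices with $i+p\le\ell$ or $i>q$. When $\ell<p+q$ there remains the window $\ell-p<i\le q$ (exactly $d$ positions at the tight length $\ell=p+q-d$), and for these you offer only the assertion that the jump graph on $\{1,\dots,\ell\}$ with steps $p$ and $q$ connects all positions in a common residue class mod $d$. That assertion is \emph{equivalent} to the theorem (if some pair in a common class were disconnected, coloring positions by connected component would produce a word with periods $p$ and $q$ but not $d$), so invoking it is circular; as written, the proof is incomplete at its only hard point. The standard repair is to aim the Euclidean step at a prefix rather than at the whole word: show that the prefix $U'$ of $U$ of length $\ell-q$ has periods $q$ and $p-q$ — every index required there satisfies $i\le(\ell-q)-(p-q)=\ell-p$, so your first route alone suffices and no boundary case ever arises — then apply the induction hypothesis to $U'$, whose length $\ell-q\ge p-d=(p-q)+q-\gcd(p-q,q)$ meets the hypothesis, to get period $d$ on $U'$; finally propagate period $d$ back to $U$: since $p\ge q+d$ gives $\ell\ge 2q$, the length-$q$ prefix of $U$ lies inside $U'$ and hence equals $z^{q/d}$ for its length-$d$ prefix $z$, and $q$-periodicity of $U$ then exhibits $U$ as a prefix of $z^{\infty}$. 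With that replacement, your outline becomes a complete and correct proof of the statement as used in the paper.
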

In most applications all we will need is $|U| \ge |X| + |Y| - 1$,
rather than \eqn{EqFW} itself.
 
 There is an equivalent definition of robustness that is easier to check.
 
 \begin{thm}\label{ThRobust}
 If $S \in \sP(n,k)$ is not robust, implying that $S^{k+1}$ has a proper suffix $T^{k+1}$
 for some $T$, then $T^{k+1}$ is in fact a proper suffix of $S^2$.
 \end{thm}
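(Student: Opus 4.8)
The plan is to derive the bound from the Fine-Wilf theorem (Theorem~\ref{ThFW}), using the primitivity of $S$ as the crucial constraint. Write $m := |S| = n$ and $p := |T|$. Since by hypothesis $T^{k+1}$ is a \emph{proper} suffix of $S^{k+1}$, comparing lengths gives $(k+1)\,p < (k+1)\,m$, hence $1 \le p < m$; in particular $T$ is nonempty and strictly shorter than $S$. The whole point will be to show that $p$ is in fact so small that $(k+1)\,p < 2n$, which immediately forces $T^{k+1}$ into $S^2$.

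First I would observe that $T^{k+1}$ is a common suffix of the two periodic words $S^{k+1}$ (a power of $S$) and $T^{k+1}$ (a power of $T$), of length exactly $(k+1)\,p$. I then apply Theorem~\ref{ThFW} with its ``$X$'' taken to be $S$ and its ``$Y$'' taken to be $T$ (so $i = j = k+1$ and the common suffix is $U = T^{k+1}$). Suppose, toward a contradiction, that the overlap is long, namely $(k+1)\,p \ge m + p - \gcd(m,p)$. Then Fine-Wilf produces a word $Z$ with $|Z| = \gcd(m,p)$ and $S = Z^g$ for some integer $g \ge 1$. But $\gcd(m,p) \le p < m$, so $|Z| < |S|$ and therefore $g \ge 2$, making $S$ imprimitive and contradicting $S \in \sP(n,k)$.

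Hence the overlap must be short: $(k+1)\,p < m + p - \gcd(m,p) \le m + p - 1$, which gives $k\,p < m$. Combining $k\,p < m$ with $p < m$ yields $(k+1)\,p = k\,p + p < 2m = 2n$. Finally, $S^2$ is precisely the length-$2n$ suffix of $S^{k+1}$, so every suffix of $S^{k+1}$ of length less than $2n$ is automatically a proper suffix of $S^2$; since $T^{k+1}$ is a suffix of $S^{k+1}$ with $|T^{k+1}| = (k+1)\,p < 2n$, it is a proper suffix of $S^2$, as required.

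I do not anticipate a deep obstacle, since the argument is a single clean application of Fine-Wilf. The step needing the most care is the bookkeeping in the Fine-Wilf hypothesis: one must confirm that the common suffix genuinely has length $(k+1)\,p$ (i.e.\ that \emph{all} of $T^{k+1}$ lies inside $S^{k+1}$, which is exactly the ``proper suffix'' assumption) and that the threshold $m + p - \gcd(m,p)$ is the correct one, so that crossing it really triggers the theorem. Once that is pinned down, the propagation from $k\,p < m$ to $(k+1)\,p < 2n$ is routine and the conclusion is immediate.
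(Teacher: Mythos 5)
Your proof is correct and follows essentially the same route as the paper's: both treat $T^{k+1}$ as a common suffix of $S^{k+1}$ and $T^{k+1}$, invoke the Fine--Wilf theorem (Theorem~\ref{ThFW}) to show a long overlap would force $S$ to be imprimitive, and then conclude $(k+1)|T| < 2n$ so that $T^{k+1}$ sits inside the suffix $S^2$. The only cosmetic difference is that the paper dispatches $k=1$ as trivial and assumes $k \ge 2$, whereas your argument handles all $k \ge 1$ uniformly.
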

 \begin{proof}
 The assertion is trivially true if $k=1$, so we assume $k \ge 2$.
 The hypotheses imply $t := |T| < n$. 
 Now $S^{k+1}$ and $T^{k+1}$ have a common suffix of length $(k+1)t$.
 If it were the case that $(k+1)t \ge n+t-1$, by Theorem \ref{ThFW} we would
 have $S=Z^g$, $T=Z^h$, for some $Z, g, h$ with $g>h$, implying $g \ge 2$
 and so $S$ would be imprimitive, a contradiction.
 So $(k+1)t < n+t-1 <2n$, as required.
 \end{proof}
 
 It follows that $S \in \sP(n,k)$ is robust if and only if no proper 
 suffix of $S^2$ has curling number $k+1$. This greatly
 simplifies the computation of the numbers $p(n,k)$.
 
A trivial but useful observation is that
prefixing a sequence with a single number
cannot increase the curling number by more than 1: 
\begin{thm}\label{Lem78.4}
If $S \in \sC(n,k)$ then $2S$ $($and equally $3S)$
is in either $\sC(n+1,k)$ or $\sC(n+1,k+1)$.
\end{thm}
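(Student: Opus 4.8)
The plan is to sandwich $\CN(2S)$ between $k$ and $k+1$ by proving the two inequalities $\CN(2S) \ge k$ and $\CN(2S) \le k+1$ separately; since nothing in the argument will use the actual value of the prepended symbol, the identical reasoning applies to $3S$, and I will write it only for $2S$. The lower bound is immediate: if $S = X\,Y^k$ with $Y^k$ a suffix of greatest exponent, then $2S = (2X)\,Y^k$ still has $Y^k$ as a suffix, so $\CN(2S) \ge k$.

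For the upper bound I would examine where a maximal suffix power of $2S$ is situated. Set $m := \CN(2S)$ and, by the definition of the curling number, fix $Z$ and $W$ with $2S = W\,Z^m$ and $Z^m$ a suffix of $2S$. Everything hinges on a single case distinction: does the block $Z^m$ fit entirely inside the original copy of $S$, or does it extend all the way to the prepended $2$? In the first case, $|Z^m| \le n$, and because the last $n$ symbols of $2S$ are exactly $S$, the power $Z^m$ is also a suffix of $S$; hence $S$ has a suffix power of exponent $m$, forcing $m \le \CN(S) = k$.

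The only remaining possibility is $|Z^m| = n+1$, i.e.\ $W = \EMPTY$ and $2S = Z^m$, and this boundary case is where I expect the real content to lie. Here $S$ is obtained from $Z^m$ by deleting its first symbol, so $S$ is the suffix of $Z^m$ of length $m|Z| - 1$. Since $|Z| \ge 1$ we have $(m-1)|Z| \le m|Z| - 1$, so this suffix still contains the final $(m-1)|Z|$ symbols intact, namely a full copy of $Z^{m-1}$; thus $Z^{m-1}$ is a suffix of $S$, giving $\CN(S) \ge m-1$, i.e.\ $m \le k+1$. Combining the two cases yields $\CN(2S) \le k+1$, and together with the lower bound this places $2S$ in $\sC(n+1,k) \cup \sC(n+1,k+1)$.

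The main obstacle is entirely concentrated in the boundary case: one must verify carefully that after deleting a single symbol from the front of $Z^m$ a genuine power $Z^{m-1}$ survives as a suffix of $S$, and it is precisely the loss of one repetition here (rather than none) that produces the weaker bound $m \le k+1$ instead of $m \le k$. No appeal to Fine--Wilf (Theorem~\ref{ThFW}) seems necessary; the argument is purely a length-accounting of where the maximal suffix power can begin relative to the inserted symbol.
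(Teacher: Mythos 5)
Your proof is correct and takes essentially the same approach as the paper: the paper's proof likewise observes that if $2S = U\,V^{l}$ with $l \ge k+2$, then $V^{l-1}$ (having length at most $n$) is still a suffix of $S$, forcing $\CN(S) \ge l-1 > k$ — i.e., the same ``one lost repetition'' length-accounting, done by contradiction rather than by your two-case split. The paper leaves the trivial lower bound $\CN(2S) \ge k$ implicit, which you spell out; otherwise the arguments coincide.
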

\begin{proof}
If, for example, $2S \in \sC(n+1,l)$ with $l \ge k+2$,
then $2S = U\,V^l$ for some $U, V, l$, and
$V^{l-1}$ (at least) is a suffix of $S$, contradicting
the fact that $S$ has curling number $k$.
\end{proof}

\subsection{ A recurrence for $c(n,k)$ }\label{SecCc}
The first main theorem of this section expresses the $n$-th row of the $c(n,k)$ table
in terms of the $(n-1)$st row and much earlier rows of the $p(n,k)$ and $p'(n,k)$ 
tables.

\begin{thm}\label{ThCK}
The numbers $c(n,k)$ have the following properties:
$c(n,k)=0$ for $n \le k-1$,
$c(n,k)=2$ for $n=k$ and $k+1$, and, for $n \ge k+2$, 
\begin{align}\label{EqCK1}
c(n,k) & ~=~ 2 \, c(n-1,k) \nonumber \\
 & ~ + ~ [k \mid n] ~ \left( p'\left(\frac{n}{k},k-1\right) ~+~ 
 q\left( \frac{n}{k},k-2 \right) 
  \right) \nonumber \\
 & ~ - ~ [k+1 \mid n] ~ \left( p'\left(\frac{n}{k+1},k\right) ~+~ 
 q\left( \frac{n}{k+1},k-1 \right) 
  \right)\,, \nonumber \\
\end{align}
where the Iverson bracket $[R]$ is $1$ if the relation $R$ is true, $0$ otherwise.

\end{thm}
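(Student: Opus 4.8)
The plan is to set up a prefix-deletion recurrence and then reduce the two correction terms to counts of primitive and robust sequences. The base cases are immediate: a curling number $k$ requires a suffix $Y^k$ with $|Y|\ge 1$, so $n\ge k$, whence $c(n,k)=0$ for $n\le k-1$; for $n=k$ only the constant sequences $2^k,3^k$ qualify; and for $n=k+1$ only $3\,2^k$ and $2\,3^k$ work (a single differing symbol in front of $2^k$ or $3^k$), giving $c=2$ in both cases.

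For the recurrence I would study the two-to-one correspondence sending a length-$n$ sequence $W=aS'$, $a\in\{2,3\}$, to its length-$(n-1)$ suffix $S'$. Since every power that is a suffix of $S'$ remains a suffix of $W$, prefixing never decreases the curling number, and by Theorem~\ref{Lem78.4} it raises it by at most $1$; thus $\CN(W)\in\{\CN(S'),\CN(S')+1\}$. Writing $R_j(n)$ for the number of pairs $(a,S')$ with $|S'|=n-1$, $\CN(S')=j$ and $\CN(aS')=j+1$, each $S'\in\sC(n-1,k)$ produces exactly two children, so the children of curling number $k$ are $c(n,k)=\bigl(2c(n-1,k)-R_k(n)\bigr)+R_{k-1}(n)$, i.e.\ $c(n,k)=2c(n-1,k)+R_{k-1}(n)-R_k(n)$. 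It then suffices to prove $R_j(n)=[(j+1)\mid n]\bigl(p'(m,j)+q(m,j-1)\bigr)$ with $m=n/(j+1)$, since substituting $j=k-1$ and $j=k$ reproduces exactly the two bracketed terms of \eqn{EqCK1}.

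To evaluate $R_j(n)$, note first that if $\CN(W)=j+1$ while $\CN(S')=j$, then writing $W=XY^{j+1}$ the block $Y^{j+1}$ cannot be a proper suffix (else it would sit inside $S'$ and force $\CN(S')\ge j+1$); hence $X=\EMPTY$ and $W=V^{j+1}$ is a full $(j+1)$-st power, so $(j+1)\mid n$ and $|V|=m$. Each such $W$ contributes one pair $(a,S')=(v_1,\,v_2\cdots v_m V^j)$, and, letting $W'$ denote $W$ with its first term deleted, the pair is counted by $R_j(n)$ exactly when $\CN(W)=j+1$ and $\CN(W')=j$. Since the proper suffixes of $W$ are precisely the suffixes of $W'$, I would classify $V$ by $\CN(V)$ using Lemma~\ref{LemPi}. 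If $V$ is imprimitive then $W$ is a higher power and $\CN(W)>j+1$, excluding it. If $V$ is primitive with $\CN(V)\ge j+1$, a suffix of $V$ of exponent $\ge j+1$ already lies inside $W'$, so $\CN(W')\ge j+1$ and the pair is excluded. If $\CN(V)=j$ then $V\in\sP(m,j)$, and ``no proper suffix of $V^{j+1}$ has curling number $\ge j+1$'' is precisely robustness; robust $V$ give $\CN(W')=j$ while primitivity rules out $\CN(W)>j+1$, contributing $p'(m,j)$. Finally the case $\CN(V)\le j-1$ must contribute all primitive such $V$, namely $q(m,j-1)$, and summing gives the formula.

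The main obstacle is this last case: ruling out a spurious proper suffix $Y^e$ of $W'$ (with $Y$ primitive and $e\ge j+1$) when $\CN(V)\le j-1$. Here I would apply Fine--Wilf (Theorem~\ref{ThFW}) to the common suffix $Y^{j+1}$ of the words $V^{j+1}$ and $Y^{j+1}$: whenever $(j+1)|Y|\ge m+|Y|-\gcd(m,|Y|)$ it forces $Y=V$, so $Y^e$ could only be the whole word, not a proper suffix. The delicate regime is $|Y|$ too small for this bound, and there I would argue directly. If $|Y^e|\le m$ then $Y^e$ is a suffix of the last block $V$, giving $\CN(V)\ge e\ge j+1$, a contradiction. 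If $|Y^e|>m$ then the whole last block $V$ is a suffix of $Y^e$ and hence inherits the period $|Y|$, so $V$ ends in $Y^{\lfloor m/|Y|\rfloor}$ and $\CN(V)\ge\lfloor m/|Y|\rfloor$. But the Fine--Wilf bound fails only when $j|Y|<m$, whence $\lfloor m/|Y|\rfloor\ge j$, contradicting $\CN(V)\le j-1$. Thus every regime is impossible, $\CN(W')=j$ holds, and $\CN(W)=j+1$ follows similarly (a proper suffix of exponent $\ge j+2$ would violate $\CN(W')=j$, while a full-word exponent $\ge j+2$ would contradict primitivity of $V$). I expect reconciling the Fine--Wilf length hypothesis with this small-period case to be the only genuinely technical point; the remainder is bookkeeping with Lemma~\ref{LemPi} and Theorem~\ref{ThRobust}.
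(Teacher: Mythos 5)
Your proof is correct and takes essentially the same route as the paper's: delete the first symbol, use Theorem \ref{Lem78.4} to see that the curling number changes by at most one, observe that a jump forces the new word to be a full power $V^{j+1}$ (so $(j+1)\mid n$), and count the admissible $V$ by splitting into the robust case $\CN(V)=j$ (giving $p'$) and the case $\CN(V)\le j-1$, which Fine--Wilf disposes of (giving $q$). The only difference is cosmetic: you unify the two correction terms into a single count $R_j(n)$ evaluated at $j=k-1$ and $j=k$, whereas the paper runs the same parallel argument separately for the added term ($S=V^k$) and the excluded term ($S=V^{k+1}$).
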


\begin{proof}
We assume $k \ge 1$ and $n \ge k+2$.
Suppose $S \in \sC(n,k)$ and let $T$ denote $S$ with its left-most
term deleted. We consider the cases $\CN(T)=k$ and $\CN(T)<k$ separately.

In the first case, if $T$ is any sequence in $\sC(n-1,k)$,
and $S$ is $2T$ or $3T$, then, by Theorem \ref{Lem78.4},
$S$ is in either $\sC(n,k)$ or $\sC(n,k+1)$.
So we will obtain $2 c(n-1,k)$ sequences in $\sC(n,k)$, except that we
must exclude from the count those $T \in \sC(n-1,k)$
with the property that $2T$ or $3T = V^{k+1}$ for some
primitive $V$ of length $n/(k+1)$. This can only happen when $n$ is a multiple of $k+1$.
These $V$'s are primitive sequences of length $n/(k+1)$, with curling number
$l \le k$, and are such that no proper suffix of $V^{k+1}$ has curling number greater than $k$.
If $l=k$, the number of such $V$'s is (by definition) 
$p'(n/(k+1),k )$.
On the other hand, if $1 \le l \le k-1$, any 
$V \in \sP ( n/(k+1),l )$ has the property that no proper suffix of $V^{k+1}$
has curling number greater than $k$ (and the number of these is $p  (n/(k+1),l ))$.
This follows from the Fine-Wilf theorem (Theorem \ref{ThFW}).
For if $V^{k+1}$ has a proper suffix of the form $U^{k+1}$, then these
two sequences overlap in the last $(k+1)u$ terms, where $u = |U|$, and also $u<v$,
where $v = |V| = n/(k+1)$. Since $V$ has curling number $l<k$, the right-most $k$
copies of $U$ are not a suffix of $V$, and so $ku>v$.
This implies
\beql{EqFW2}
(k+1)u ~\ge~ v+u-1\,,
\eeq
and so by Theorem \ref{ThFW}, $V=Z^g$, $U=Z^h$, $h<g$, $g \ge 2$.
But $V^k = Z^{2g}$ is a suffix of $T$, so $\CN(T) \ge 2k > k$, a contradiction.
(Further applications of the Fine-Wilf theorem will follow
this same pattern, and we will not give as much detail.)

In the second case we must consider sequences $S = V^k$ where $\CN(T) < k$.
Now $n$ must be a multiple of $k$, and $V \in \sP(n/k,l)$ for $1 \le l \le k-1$
is such that no proper suffix of $V^k$ has curling number $k$.
If $l=k-1$, the number of such $V$'s is (by definition) $p'(n/k,k-1)$.
On the other hand, if $1 \le l \le k-2$, the condition that
no proper suffix of $V^k$ has curling number $k$ follows from the Fine-Wilf theorem
by an argument similar to that given above (except that $k+1$
is replaced by $k$), and the number is $p(n/k,l)$.
This completes the proof of the theorem.
\end{proof}

\subsection{ Sequences with curling number 1}\label{SecCd}
For the purpose of investigating the curling number conjecture,
we are particularly interested in the first three
columns of the $c(n,k)$ table, since they determine the probabilities
that a random sequence of 2's and 3's has curling number 1, 2,  3, or $\ge 4$ (see \S\ref{Sec3a}).
The values of $c(n,1)$ are especially intriguing, as this is a 
combinatorial problem of independent interest.
The first 30 terms of $c(n,1)$ were contributed to \cite{OEIS} by
G. P. Srinivasan in 2006, who described it as the
``number of binary sequences of length $n$ with
no initial repeats'', which is equivalent to our definition
(see \seqnum{A122536}). 
However, we have been unable to find a formula
for $c(n,1)$\footnote{Apart from the conjectured asymptotic 
estimate \eqn{EqCn1a}.}, or even a recurrence that expresses $c(n,1)$ in terms of 
the values of $c(m,1)$ for $m<n$.
Theorem \ref{ThCK} says only that
\beql{Eqcn1}
c(n,1) ~=~ 2c(n-1,1) ~-~  [2 \mid n] ~ p'(n/2,1),
\eeq
$p'(n/2,1)$ being the number of robust primitive binary sequences
of length $n/2$ and curling number~1.

Use of \eqn{Eqcn1} enables $n$ terms of the $c(\cdot,1)$ sequence
to be obtained from $n/2$ terms of the $p'(\cdot,1)$ sequence. In practice, this limits us to
about 100 terms of the former sequence. In order to obtain more terms,
we introduce some further terminology (which will be used only
in this section).

If $S$ has length $n$, let $S^{[i]}$ denote its length-$i$ suffix, for $1 \le i < n$.
Then we define
\begin{align}
\sA(n,i)  & ~:=~ \{S \in \sC(n,1)  \mid \CN(S^{[i]}\,S)=1\},   & 1 \le i < n\,, \nonumber  \\
\sB(n,i)  & ~:=~ \{S \in \sC(n,1)  \mid \CN(S\,S^{[i]}\,S)=1\},   & 1 \le i < n\,, \nonumber  \\
\sE(n,i,j)  & ~:=~ \{S \in \sC(n,1)  \mid S^{[i]}\,S \in \sB(n+i,j)\},   & 1 \le i < n,
1 \le j <n+i\,,
\notag
\end{align}
and let $a(n,i) = \#\sA(n,i)$, $b(n,i) = \#\sB(n,i)$, 
$e(n,i,j) = \#\sE(n,i,j)$.
$S \succeq T$  will mean that $T$ is a suffix of $S$, and
$S \succ T$  that $T$ is a proper suffix of $S$.

The following two theorems give a canonical form (see \eqn{EqTh1}) for
 non-robust sequences with curling number 1.

\begin{thm}\label{ThA1}
If $\CN(S)=1$ but $\CN(TS)>1$ for some $T$ with $S \succ T$,
 then there exist $X \ne \EMPTY $, $Y \ne \EMPTY $ with
 \beql{EqTh1}
  S = XYX, \mbox{~where~}  \CN(X)=1, ~T \succeq Y, \mbox{~and~} X \succ Y\,.
 \eeq
\end{thm}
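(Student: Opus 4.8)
The plan is to produce the decomposition directly from the square that witnesses $\CN(TS)>1$, and then read off each required property; the one real fight is the inequality $X\succ Y$. Write $n=|S|$. Since $\CN(TS)\ge 2$, the sequence $TS$ ends in a nonempty square, so there is a nonempty $W$, with $w:=|W|$, such that $W^2$ is a suffix of $TS$; note $T\ne\EMPTY$, since otherwise $TS=S$ would force $\CN(TS)=\CN(S)=1$. First I would pin down $w$. Because $\CN(S)=1$, no nonempty square is a suffix of $S$, so $W^2$ cannot fit inside $S$, which gives $2w>n$. On the other hand $W^2$ is a suffix of $TS$ while $T$ is a \emph{proper} suffix of $S$, so $2w\le |T|+n<2n$, giving $w<n$. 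Hence $n/2<w<n$.

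Next I would split $W$ at the point dictated by these lengths: let $X$ be the suffix of $W$ of length $n-w$ and $Y$ the prefix of $W$ of length $2w-n$, both nonempty by the previous step, so that $W=YX$. Since $w<n<2w$, the length-$n$ suffix of $W^2=YXYX$ is obtained by deleting the leading $Y$, so $S=XYX$, the desired shape. Two of the required clauses are then immediate. The portion of $W^2$ that protrudes to the left of $S$ has length $2w-n=|Y|$ and is exactly the leading block $Y$ of $W^2$; as it occupies the last $|Y|$ positions of $T$, we obtain $T\succeq Y$. Also $X$ is a suffix of $S$, so any nonempty square suffix of $X$ would be one of $S$; since $\CN(S)=1$ this is impossible, whence $\CN(X)=1$.

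The genuinely delicate point, and the step I expect to be the main obstacle, is $X\succ Y$, i.e.\ that $Y$ is a \emph{proper} suffix of $X$. The key extra fact is that $Y$ is itself a suffix of $S$ (since $T\succeq Y$ and $T$ is a suffix of $S$), so $Y$ must coincide with the length-$|Y|$ suffix of $S=XYX$. I would argue by contradiction: if $|Y|\ge|X|$, then the length-$|X|$ suffix of $Y$ equals the length-$|X|$ suffix of $S$, namely the trailing $X$, so $Y$ ends in $X$; appending the final $X$ then shows that $YX$, and hence $S=XYX$, ends in $XX$, a nonempty square, contradicting $\CN(S)=1$. (This also subsumes the degenerate case $|Y|=|X|$, where $Y=X$ and $S=X^3$.) Therefore $|Y|<|X|$, and the length-$|Y|$ suffix of $S$, which is $Y$, lies entirely inside the trailing block $X$, exhibiting $Y$ as a proper suffix of $X$.

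Thus every clause follows from the single decomposition $S=XYX$ read off the witnessing square, and the only conversion that needs care is turning the curling-number-$1$ hypothesis on $S$ into the length bound $|Y|<|X|$. In the write-up I would establish this length comparison first, since once $|Y|<|X|$ is in hand the surviving suffix relations $T\succeq Y$, $X\succ Y$, and $\CN(X)=1$ are purely positional consequences of $S=XYX$. Notably no minimality or primitivity of $W$ is needed: any square suffix of $TS$ yields a valid $X,Y$.
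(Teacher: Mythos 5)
Your proof is correct and follows essentially the same route as the paper's: both extract the square $W^2$ (the paper's $ZZ$) witnessing $\CN(TS)>1$, cut the repeated block at the same place to get $S=XYX$, and use $\CN(S)=1$ twice in the same way (once to force the overlap, i.e.\ your $2w>n$ versus the paper's exclusion of $X\succeq Z$, and once to rule out $|Y|\ge|X|$ via the forbidden suffix $XX$). The only difference is presentational — you run the argument through explicit length inequalities, while the paper phrases the same steps as suffix-comparability dichotomies.
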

\begin{proof}
Since $\CN(TS) > 1$, $TS \succeq ZZ$ for some $Z$ with
$|Z| < |S|$, and therefore $S \succ Z$.
We write $S = XZ$ and observe that $TXZ \succeq ZZ$, so $TX \succeq Z$.
Therefore either $X \succeq Z$ or $Z \succ X$.
The former implies $\CN(S) > 1$, a contradiction.
So $Z \succ X$, say $Z = YX$, and $S=XYX$.

Since $S \succeq X$, $\CN(X)=1$. Also $TX \succeq Z = YX$, so $T \succeq Y$.
It remains to show that $X \succ Y$. Now $S \succ T \succeq Y$ and $S \succeq X$,
so either $Y \succeq X$ or $X \succ Y$.
The former implies $Y = WX$ for some $W$,
and then $S=XYX=XWXX$, contradicting $\CN(S)=1$. So $X \succ Y$.
\end{proof}

For example,  $S = 33223\,22333223$ has curling number 1 (the conspicuous
substring $333$ makes it easy to check this). If $T=2333\,223$,
then $TS = 2333\,Z^2$, where $Z  = 223\,33223$ (the spaces
in these strings are for legibility). Then, following the steps of the proof,
we write $S=XZ$, which defines $X = 33223$, and then write $Z=YX$,
which defines $Y=223$, and so finally we have
$$
S ~=~ XYX ~=~ 33223\,223\,33223\,,
$$
as claimed.

\begin{thm}\label{ThA2}
If $S = XYX = UVU$, with $X \succ Y \ne \EMPTY $, 
$U \succ V \ne \EMPTY $, $X \ne U$, then $\CN(S)>1$.
\end{thm}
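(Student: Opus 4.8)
The plan is to exploit the fact that the two decompositions force both $X$ and $U$ to occur simultaneously as a prefix and a suffix of $S$ (i.e.\ to be \emph{borders} of $S$), and then to show that the shorter of the two is a border of the longer, producing a short period of the longer and hence a square suffix of $S$. A pleasant feature is that no appeal to the Fine--Wilf theorem (Theorem \ref{ThFW}) is needed here, in contrast to the other square-producing arguments in this section.

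First I would record that $X$ is a suffix of $S$ (from $S=XYX$), $U$ is a suffix of $S$ (from $S=UVU$), and likewise both are prefixes of $S$. Since $X \ne U$ their lengths differ, and by the symmetry of the hypotheses under interchanging the pair $(X,Y)$ with $(U,V)$ I may assume $|X| > |U|$ (equal lengths would force $X=U$, as both are then the suffix of $S$ of that common length). Two suffixes, and two prefixes, of a common sequence are nested, so $U$ is simultaneously a proper suffix and a proper prefix of $X$. Writing $X = PU = UQ$ with $|P|=|Q| = d := |X|-|U| \ge 1$ exhibits $d$ as a period of $X$: the symbol in each position $j \le |X|-d$ agrees with the symbol in position $j+d$.

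Next I would extract the two length inequalities that make this period short enough relative to $|X|$. Since $Y \ne \EMPTY$, the middle block of $S = XYX$ is nonempty, so $2|X| < |S|$, giving $|X| < |S|/2$; since $V$ is a \emph{proper} suffix of $U$, the middle block of $S=UVU$ is strictly shorter than $U$, so $|U| > |S|/3$. Combining these, $|X| < |S|/2 < 2|S|/3 < 2|U|$, whence $2d = 2(|X|-|U|) < |X|$.

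Finally, with $X$ of period $d$ and $2d < |X|$, I would observe that the last $2d$ symbols of $X$ split into two consecutive blocks of length $d$ which the period identifies symbol by symbol, so together they form a square $Z^2$ with $|Z|=d$. As $X$ is itself a suffix of $S$, this $Z^2$ is a suffix of $S$, and therefore $\CN(S) \ge 2 > 1$, as required. The only delicate point in the whole argument — and the sole place where the hypotheses $Y \ne \EMPTY$ and $U \succ V$ (with $V \ne \EMPTY$) are actually used — is the bookkeeping that guarantees $2d \le |X|$; once that inequality is secured, the square suffix is immediate.
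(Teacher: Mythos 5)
Your proof is correct and is essentially the paper's own argument: both proofs nest the two borders $X$ and $U$ of $S$, set $d$ equal to the difference of their lengths, use $Y \ne \EMPTY$ together with the properness of $V$ to show $d$ is small enough, and exhibit the very same square suffix of root length $d$. The paper's version (in its WLOG orientation) writes $U = XZ$, then $X = AZ$, so $S = UVAZZ$ — which is exactly your border-period step in direct-decomposition form — and your $|X| < |S|/2$, $|U| > |S|/3$ bounds are equivalent bookkeeping for the paper's inequality $2|Z| < |Y| < |X|$.
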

\begin{proof}
Without loss of generality, $U \succ X$.
Since both $X$ and $U$ are {\em prefixes} of $S$, we have $U=XZ$ 
for some $Z \ne \EMPTY $, and $S \succ XZ$. 
Now $2|Z|=|S|-2|X|-|V| <|S|-2|X| = |Y| <|X|$,
so $|X|>|Z|$.
This implies $X \succ Z$ (they are both suffixes 
of $S$), say $X=AZ$, so
$S = UVU = UVXZ=UVAZZ$, contradicting $\CN(S)=1$. 
\end{proof}

Theorems \ref{ThA1} and \ref{ThA2} say that a non-robust sequence
$S$ with curling number 1 can be written in a unique way as $S = XYX$, where $Y$ is a suffix of $X$. 

\begin{cor}\label{CorA41}
(i) For $1 \le i < n/3$, there is a bijection between the sets $\sC(n,1) \setminus \sA(n,i)$ and
$$
\bigcup_{m = \lceil (n-i)/2 \rceil}^{\lfloor (n-1)/2 \rfloor} \, \sB(m, n-2m)\,.
$$
(ii) For $n/3 \le i < n$, there is a bijection between the sets $\sC(n,1) \setminus \sA(n,i)$ and
$$
\bigcup_{m = 1+\lfloor n/3 \rfloor}^{\lfloor (n-1)/2 \rfloor} \, \sB(m, n-2m)\,.
$$
\end{cor}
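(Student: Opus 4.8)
The plan is to attach to each $S \in \sC(n,1)\setminus\sA(n,i)$ its unique $XYX$-decomposition supplied by Theorems \ref{ThA1} and \ref{ThA2}, and to send $S$ to $X$. First I would establish the governing characterization: a sequence $S \in \sC(n,1)$ lies outside $\sA(n,i)$ exactly when $\CN(S^{[i]}\,S) > 1$. Taking $T = S^{[i]}$ (legitimate since $1 \le i < n$ forces $S \succ S^{[i]}$), Theorem \ref{ThA1} produces a decomposition $S = XYX$ with $X \succ Y \ne \EMPTY$, $\CN(X)=1$, and $S^{[i]} \succeq Y$; by the uniqueness from Theorem \ref{ThA2} this is \emph{the} decomposition, so its $Y$ satisfies $|Y| \le i$. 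Conversely, if the unique decomposition has $|Y| \le i$, then $Y$ is the length-$|Y|$ suffix of $S^{[i]}$, so $YXYX = (YX)^2$ is a suffix of $S^{[i]}\,S$ and $\CN(S^{[i]}\,S) \ge 2$. Hence $S \in \sC(n,1)\setminus\sA(n,i)$ if and only if $S = XYX$ (uniquely) with $|Y| \le i$.

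Next I would verify that $S = XYX \mapsto X$ lands in the right target. Writing $m = |X|$, the suffix $Y$ of $X$ has length $n - 2m$, so $X^{[n-2m]} = Y$ and therefore $X\,X^{[n-2m]}\,X = XYX = S$. Since $\CN(S)=1$, this is exactly the statement that $X \in \sB(m,\,n-2m)$. The inverse map takes $X \in \sB(m,\,n-2m)$, sets $Y := X^{[n-2m]}$ and $S := XYX$; then $S \in \sC(n,1)$ because $\CN(S)=\CN(X\,X^{[n-2m]}\,X)=1$, and the characterization above puts $S$ in $\sC(n,1)\setminus\sA(n,i)$ provided $n-2m \le i$. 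The two maps are mutually inverse by the uniqueness of the $XYX$-form, and the sets $\sB(m,\,n-2m)$ for distinct $m$ are automatically disjoint since their members have length $m$.

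The remaining work is purely the determination of the admissible range of $m$, and this is the source of the split into (i) and (ii). Three constraints must hold simultaneously: $Y \ne \EMPTY$ gives $n-2m \ge 1$, i.e.\ $m \le \lfloor (n-1)/2\rfloor$; $X \succ Y$ gives $n-2m < m$, i.e.\ $m \ge \lfloor n/3\rfloor + 1$; and $|Y| \le i$ gives $n-2m \le i$, i.e.\ $m \ge \lceil (n-i)/2\rceil$. When $i < n/3$ one has $(n-i)/2 > n/3$, so $\lceil (n-i)/2\rceil \ge \lfloor n/3\rfloor + 1$ and the ceiling bound is binding, giving range (i). When $i \ge n/3$ one has $(n-i)/2 \le n/3$, so $\lceil (n-i)/2\rceil \le \lfloor n/3\rfloor + 1$ and the floor bound is binding, giving range (ii); here I would note that every $m \ge \lfloor n/3\rfloor + 1$ already forces $n-2m \le n/3 \le i$, so the constraint $|Y|\le i$ is free.

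I expect the main obstacle to be not any single deep step but the careful bookkeeping of the floor/ceiling inequalities in the case split, together with arguing the backward direction of the characterization directly — the explicit appearance of $(YX)^2$ as a suffix of $S^{[i]}\,S$ — since that is the one point not handed to us verbatim by Theorems \ref{ThA1} and \ref{ThA2}.
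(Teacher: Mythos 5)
Your proposal is correct and follows essentially the same route as the paper's own proof: apply Theorem \ref{ThA1} with $T = S^{[i]}$ to get the unique (by Theorem \ref{ThA2}) decomposition $S = XYX$, send $S \mapsto X \in \sB(|X|, n-2|X|)$, invert via $X \mapsto X\,X^{[n-2m]}X$ with the $(YX)^2$-suffix argument showing the image avoids $\sA(n,i)$, and finish with the same three constraints on $m$ whose interaction at $(n-i)/2 \gtrless n/3$ produces the case split. The floor/ceiling bookkeeping and the case boundary $n > 3i$ match the paper exactly.
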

\begin{proof}
Fix $i$, where $1 \le i < n$.  First, suppose that $S$ is in $\sC(n,1)
\setminus \sA(n,i)$.  Taking $T = S^{[i]}$ in Theorem \ref{ThA1} we may
write $S = XYX$ where $m := |X| \ge 1$, $X \in \sC(m,1)$,
$n-2m = |Y| \le |S^{[i]}| = i$ and $m = |X| > |Y| = n-2m \ge 1$.
Hence $X \in \sB(m, n-2m)$ for some $m$ satisfying the three conditions:
$m \ge (n-i)/2$, $m > n/3$ and $m \le (n-1)/2$.  By Theorem \ref{ThA2}, $X$
belongs to only one such $\sB(m, n-2m)$.

Conversely, if $m$ satisfies these three conditions and $X \in
\sB(m, n-2m)$ then let $S = X X^{[n-2m]} X$.  By the definition
of $\sB(m, n-2m)$, $S$ must be in $\sC(n,1)$ and since $m \ge (n-i)/2$,
we have $S^{[i]} S \succeq S^{[n-2m]} S = (YX)^2$, so that S is not
in $\sA(n,i)$.

This establishes a bijection between $\sC(n,1) \setminus \sA(n,i)$
and the union of $\sB(m, n-2m)$ for $m$ satisfying the three earlier
conditions.  The proof is completed by observing that $(n-i)/2 > n/3$
if and only if $n > 3i$, which is the condition that separates  cases $(i)$ 
and $(ii)$ of the Corollary.
\end{proof}

Since the unions in Corollary \ref{CorA41} are clearly disjoint, we immediately
obtain the following formulas for $a(n,i)$.

\begin{cor}\label{CorA42}
(i) For $1 \le i < n/3$, 
\beql{EqA4a}
a(n,i) ~=~ c(n,1) ~-~ \sum_{m = \lceil (n-i)/2 \rceil}^{\lfloor (n-1)/2 \rfloor} \, b(m, n-2m)\,.
\eeq
(ii) For $n/3 \le i < n$,
\beql{EqA4b}
a(n,i) ~=~ c(n,1) ~-~ \sum_{m = 1 + \lfloor n/3 \rfloor}^{\lfloor (n-1)/2 \rfloor} \, b(m, n-2m)\,.
\eeq
\end{cor}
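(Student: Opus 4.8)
The plan is to read the two formulas directly off the bijections of Corollary~\ref{CorA41}; essentially nothing new is required beyond one elementary counting observation. First I would use that $\sA(n,i) \subseteq \sC(n,1)$ by definition, so that
$$
a(n,i) ~=~ \#\sA(n,i) ~=~ c(n,1) ~-~ \#\bigl(\sC(n,1) \setminus \sA(n,i)\bigr).
$$
Thus the entire task reduces to evaluating the cardinality of the complementary set $\sC(n,1) \setminus \sA(n,i)$.

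Next I would invoke Corollary~\ref{CorA41}, which furnishes an explicit bijection between $\sC(n,1) \setminus \sA(n,i)$ and a union of sets $\sB(m, n-2m)$, where the range of $m$ is $\lceil (n-i)/2 \rceil \le m \le \lfloor (n-1)/2 \rfloor$ when $1 \le i < n/3$ (giving case (i)) and $1 + \lfloor n/3 \rfloor \le m \le \lfloor (n-1)/2 \rfloor$ when $n/3 \le i < n$ (giving case (ii)). Since a bijection preserves cardinality, $\#\bigl(\sC(n,1) \setminus \sA(n,i)\bigr)$ equals the cardinality of the corresponding union, and substituting this into the display above will produce \eqn{EqA4a} and \eqn{EqA4b} once the union has been expanded as a sum.

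The only step needing any comment is the passage from the cardinality of the union to the sum of cardinalities $b(m, n-2m)$, and this is immediate because the union is disjoint: every sequence in $\sB(m, n-2m)$ has length exactly $m$, so sets indexed by distinct values of $m$ consist of sequences of distinct lengths and cannot intersect. Hence
$$
\#\bigcup_m \sB(m, n-2m) ~=~ \sum_m \#\sB(m, n-2m) ~=~ \sum_m b(m, n-2m),
$$
with $m$ ranging over the interval appropriate to each case. There is no genuine obstacle in this corollary; all the real content sits in the bijection of Corollary~\ref{CorA41}, and what remains is only the routine conversion of that bijection into an explicit count.
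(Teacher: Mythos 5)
Your proposal is correct and matches the paper's own argument exactly: the paper derives the corollary in one line by noting that the unions in Corollary~\ref{CorA41} are disjoint, so the bijection immediately yields $a(n,i) = c(n,1) - \sum_m b(m,n-2m)$ over the stated ranges. Your added justification of disjointness (sets indexed by distinct $m$ contain sequences of distinct lengths) is a correct elaboration of what the paper asserts as ``clearly disjoint.''
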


The next three theorems give a further refinement of
non-robust sequences, and lead to the set bijections and formulas in
Corollaries \ref{Cor13} and \ref{Cor14}. We postpone their proofs to the Appendix.
The proof that  Corollary \ref{Cor13} follows from Theorems \ref{ThA5} through \ref{ThA7}
is similar to the proof of Corollary \ref{CorA41} and is omitted.

\begin{thm}\label{ThA5}
If $X \succ Y$, $\CN(YX)=1$, and $\CN(XYX) > 1$,
then there exist $S$ and $T$ such that $YX = STS$ with $X \succeq T$,
$S \succ T$, $\CN(S)=1$.
Furthermore, either $|S|=|Y|$ or $|S| > 2|Y|$.
\end{thm}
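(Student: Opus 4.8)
The plan is to obtain the structural part of the statement as an immediate consequence of Theorem~\ref{ThA1}, and then to extract the length dichotomy by a short periodicity (Fine--Wilf style) argument. First I would dispose of the degenerate case: we may assume $Y \ne \EMPTY$, since otherwise $YX = X$ has curling number $1$ while $XYX = X^2$, and the claim is checked directly. Now set $W := YX$, so that $\CN(W) = 1$ by hypothesis and $\CN(XW) = \CN(XYX) > 1$. Because $Y \ne \EMPTY$, the word $X$ is a \emph{proper} suffix of $W$, i.e.\ $W \succ X$. Applying Theorem~\ref{ThA1} to $W$ with the prepended word $X$ yields a factorization $YX = STS$ in which $\CN(S) = 1$, $X \succeq T$, and $S \succ T$; this is exactly the existence statement. (Here $T \ne \EMPTY$, since $T = \EMPTY$ would force $YX = S^2$, contradicting $\CN(YX) = 1$.)

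It remains to prove that $|S| = |Y|$ or $|S| > 2|Y|$. Write $y = |Y|$, $x = |X|$, $s = |S|$, $t = |T|$, so that $y + x = 2s + t$, while $y < x$ (from $X \succ Y$) and $t < s$ (from $S \succ T$). Two observations are used throughout: since $Y$ is a suffix of $X$ and $X$ is a suffix of $YX$, the word $Y$ is a suffix of $YX$; and $S$ is both a prefix and a suffix of $YX = STS$. I would rule out the two forbidden ranges separately. Suppose first that $s > y$. Then $Y$, being a prefix of $YX$ of length $y < s$, is a prefix of $S$; and $Y$, being a suffix of $YX$ of length $y < s$, is a suffix of $S$. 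Thus $Y$ is a border of $S$, so $S$ has period $s - y$; if moreover $s \le 2y$, then $2(s-y) \le s$ and $S$ ends in a square, contradicting $\CN(S) = 1$. Hence $s > y$ forces $s > 2y$.

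Now suppose $s < y$. Since $s < y < x$ we have $x > s$, so the identity $x - s = s + t - y$ gives $t > y - s$; combined with $t < s$ this yields $y < 2s$, that is, $s > y/2$. On the other hand $S$ is a prefix of $Y$ (being a prefix of $YX$ of length $s < y$) and also a suffix of $Y$ (being a suffix of $YX$ of length $s < y$), so $S$ is a border of $Y$ and $Y$ has period $y - s$. Because $s > y/2$ we have $2(y - s) < y$, so $Y$ ends in a square of period $y-s$; as $Y$ is a suffix of $YX$, this square is a suffix of $YX$, contradicting $\CN(YX) = 1$. Hence $s < y$ is impossible, and together with the previous paragraph we conclude $s = y$ or $s > 2y$.

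The work splits cleanly: the existence of the decomposition $YX = STS$ with the three suffix relations is free from Theorem~\ref{ThA1}, and the only genuine content is the length bookkeeping. The step I expect to be most delicate is the case $s < y$, where no contradiction is visible from the factorization alone; it emerges only after combining the counting identity $y + x = 2s + t$ with $x > s$ and $t < s$ to force $s > y/2$, which is precisely what makes the period $y - s$ of $Y$ small enough to produce a square suffix of $YX$. By contrast, the case $s > y$ reads off a square suffix of $S$ directly and is routine.
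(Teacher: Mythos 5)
Your proof is correct, and its skeleton is the same as the paper's: existence of the decomposition is obtained by applying Theorem \ref{ThA1} to $YX$ with prepended word $X$ (precisely the paper's step), and the length dichotomy comes from the counting identity $x+y=2s+t$ combined with square-suffix contradictions. The packaging of the dichotomy differs mildly, in your favor. In both ranges you invoke the standard border/period facts (a border of length $b$ forces period $n-b$; a period $p$ with $2p\le n$ forces a square suffix), where the paper instead constructs the repeated words by hand ($S=YU$, $Y=CU$, etc.). More substantively, in the range $s<y$ the paper writes $Y=SU$ and splits into sub-cases: $S\succeq U$ (giving a square suffix $UU$ of $YX$) and $U\succ S$ (killed arithmetically using $t\le x$), whereas you first prove $y<2s$ outright from $x>s$ and $t<s$, which in effect forces the first sub-case and leaves a single border argument. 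The ingredients are identical; your organization simply avoids the sub-case split.

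One caveat: your dismissal of the degenerate case $Y=\EMPTY$ is wrong as stated. You say the claim ``is checked directly'' there, but in fact it fails: for $X=2$, $Y=\EMPTY$ the hypotheses hold ($\CN(YX)=\CN(2)=1$ and $\CN(XYX)=\CN(2\,2)=2$), yet $YX=2$ cannot be written as $STS$ with $S\succ T$, since $S\succ T$ forces $S\ne\EMPTY$ and hence $|STS|\ge 2$. So $Y\ne\EMPTY$ is not a case one can verify but an implicit hypothesis of the theorem; it holds in every application (where $Y=X^{[i]}$ with $i\ge 1$), and the paper's own proof needs it too, since applying Theorem \ref{ThA1} requires $YX\succ X$. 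Your main argument is unaffected, as it uses $Y\ne\EMPTY$ exactly where it should, to guarantee $YX\succ X$.
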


\begin{thm}\label{ThA6}
If $n/2< i <n$ then there is a bijection between $\sA(n,i) \setminus \sB(n,i)$ and $\sB(i,n-i)$.
\end{thm}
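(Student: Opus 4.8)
The plan is to exhibit an explicit pair of mutually inverse maps rather than argue abstractly. Define $\phi\colon \sA(n,i)\setminus\sB(n,i)\to\sB(i,n-i)$ by $\phi(S)=S^{[i]}$ (the length-$i$ suffix of $S$), and define $\psi\colon\sB(i,n-i)\to\sA(n,i)\setminus\sB(n,i)$ by $\psi(W)=W^{[n-i]}\,W$. Note that $\psi(W)$ is a legitimate string of length $n$ precisely because $n-i<i$, so that $W^{[n-i]}$ is defined; this is the first place the hypothesis $i>n/2$ is used. I would then check that each map lands in the claimed set and that $\psi\circ\phi$ and $\phi\circ\psi$ are the identity, the latter being immediate once the images are correct, since $\bigl(W^{[n-i]}W\bigr)^{[i]}=W$ and, conversely, the forward analysis will produce the identity $S=(S^{[i]})^{[n-i]}\,S^{[i]}$.

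For the forward map I would invoke Theorem \ref{ThA5} with $X=S$ and $Y=S^{[i]}$. Its hypotheses hold: $X\succ Y$ since $i<n$; $\CN(YX)=\CN(S^{[i]}S)=1$ because $S\in\sA(n,i)$; and $\CN(XYX)=\CN(S\,S^{[i]}S)>1$ because $S\notin\sB(n,i)$. Theorem \ref{ThA5} then yields a factorization $S^{[i]}S=PRP$ with $P\succ R$, $\CN(P)=1$, and either $|P|=|Y|=i$ or $|P|>2i$. A length count kills the second alternative: from $2|P|+|R|=|S^{[i]}S|=n+i$ and $|R|\ge 1$ (otherwise $S^{[i]}S=P^2$ would have curling number $\ge 2$) we get $|P|<(n+i)/2$, whereas $|P|>2i$ would force $3i<n$, contradicting $i>n/2$. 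Hence $|P|=i$, $|R|=n-i$, and $R=P^{[n-i]}$. Comparing the last $i$ symbols of the two sides of $S^{[i]}S=P\,P^{[n-i]}\,P$ gives $P=S^{[i]}$, and then $\CN\!\bigl(S^{[i]}\,(S^{[i]})^{[n-i]}\,S^{[i]}\bigr)=\CN(S^{[i]}S)=1$, so $\phi(S)=S^{[i]}\in\sB(i,n-i)$; deleting the common leading block $P$ records the identity $S=(S^{[i]})^{[n-i]}\,S^{[i]}$ used above.

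For the inverse map, put $S=\psi(W)=W^{[n-i]}W$, so that $S^{[i]}=W$. Since $W\in\sB(i,n-i)$ we have $\CN(W\,W^{[n-i]}\,W)=1$, and $S$ is a suffix of this word; as any suffix of a curling-number-$1$ sequence again has curling number $1$ (a trailing square of the suffix would be a trailing square of the whole), $\CN(S)=1$ and $S\in\sC(n,1)$. Moreover $S^{[i]}S=W\,W^{[n-i]}\,W$ has curling number $1$, so $S\in\sA(n,i)$. Finally, writing $W=BA$ with $A:=W^{[n-i]}$ (so $B$ is nonempty exactly because $i>n/2$), we have $S=ABA$ and $S^{[i]}=BA$, whence $S\,S^{[i]}\,S=(AB)\,S^2$ exhibits $S^2$ as a proper suffix; thus $\CN(S\,S^{[i]}\,S)\ge 2$ and $S\notin\sB(n,i)$. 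This puts $\psi(W)$ in $\sA(n,i)\setminus\sB(n,i)$ and completes the verification that $\phi$ and $\psi$ are inverse bijections.

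I expect the only genuine subtlety to be the forward step: extracting exactly the right factorization of $S^{[i]}S$. This is why Theorem \ref{ThA5}, rather than the coarser Theorem \ref{ThA1}, is the natural tool, since its built-in dichotomy ``$|S|=|Y|$ or $|S|>2|Y|$'' collapses, under $i>n/2$, to the single usable case $|P|=i$, which is what pins down $P=S^{[i]}$. Everything after that is bookkeeping: the inverse direction reduces to the elementary observation that suffixes inherit curling number $1$, together with the explicit trailing square $S^2$ inside $S\,S^{[i]}\,S$.
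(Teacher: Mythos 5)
Your proposal is correct and takes essentially the same route as the paper's proof: you invoke Theorem \ref{ThA5} with $Y=X^{[i]}$, use $i>n/2$ to eliminate the $|S|>2|Y|$ branch and pin down the repeated block as $S^{[i]}$, and for surjectivity construct $W^{[n-i]}W$ and exhibit the trailing square (your $(AB)\,S^2$ is exactly the paper's $QPQ\,PQ\,QPQ$ ending in $(QPQ)^2$). The only cosmetic difference is that you package the argument as an explicit pair of mutually inverse maps $\phi,\psi$, whereas the paper verifies that the single map $X\mapsto X^{[i]}$ is one-to-one and onto.
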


\begin{thm}\label{ThA7}
If $1 \le i < n/3$ then $\sA(n,i) \setminus \sB(n,i)$ is a disjoint union
of $\sE(m-i, i, n+i-2m)$, where
$\max (2i, 1+\lfloor(n+i)/3 \rfloor ) < m \le \lfloor (n+i-1)/2 \rfloor$. 
\end{thm}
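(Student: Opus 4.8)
The plan is to realize the asserted decomposition as a bijection $S \mapsto X$ that sends each $S \in \sA(n,i)\setminus \sB(n,i)$ to a short border of itself, while reading off the index $m$ from the canonical factorization supplied by Theorem \ref{ThA5}. First I would apply Theorem \ref{ThA5} with its ``$X$'' taken to be $S$ and its ``$Y$'' taken to be $S^{[i]}$. The membership $S \in \sA(n,i)$ gives $\CN(S^{[i]}S)=1$, the failure $S \notin \sB(n,i)$ gives $\CN(S\,S^{[i]}\,S)>1$, and $S \succ S^{[i]}$ holds because $i<n$. Theorem \ref{ThA5} then produces sequences I will call $W,V$ (its ``$S,T$'') with $S^{[i]}S = WVW$, $\CN(W)=1$, $S \succeq V$, $W \succ V$, and either $|W|=i$ or $|W|>2i$. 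Setting $m:=|W|$, the total length forces $|V|=n+i-2m$. Since $W$ is simultaneously a prefix and a suffix of the length-$(n+i)$ word $S^{[i]}S$ and $m<n$, the suffix copy lies entirely inside the final $S$, so $W=S^{[m]}$; comparing the prefix and the suffix descriptions of $W$ shows that the length-$(m-i)$ prefix $X$ of $S$ is also the length-$(m-i)$ suffix of $S$ (a border), that $X^{[i]}=S^{[i]}$, and hence that $W=X^{[i]}X$ and $V=W^{[n+i-2m]}$. Because $X$ is a suffix of the curling-number-$1$ word $S$, any square suffix of $X$ would be one of $S$, so $\CN(X)=1$; and because $S^{[i]}S=W\,W^{[n+i-2m]}\,W$ has curling number $1$, we get $W\in\sB(m,n+i-2m)$, and therefore $X\in\sE(m-i,i,n+i-2m)$.

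Next I would construct the inverse. Given $X\in\sE(m-i,i,n+i-2m)$ for an admissible $m$, set $W:=X^{[i]}X$, $V:=W^{[n+i-2m]}$, and $S:=XVW$ (of length $n$). Then $S^{[i]}=W^{[i]}=X^{[i]}$, so $S^{[i]}S=(X^{[i]}X)VW=WVW=W\,W^{[n+i-2m]}\,W$, whose curling number is $1$ since $W\in\sB(m,n+i-2m)$; hence $S\in\sA(n,i)$, and $\CN(S)=1$ as well because $S$ is a suffix of $S^{[i]}S$. To see $S\notin\sB(n,i)$, substitute $X^{[i]}X=W$ into $S\,S^{[i]}\,S=XVW\,X^{[i]}\,XVW$ to obtain the word $XVWWVW$; since $V=W^{[n+i-2m]}$ is a suffix of $W$, the last $2(|V|+|W|)$ letters of this word read $V\,W\,V\,W=(VW)^2$, whence $\CN(S\,S^{[i]}\,S)\ge 2$. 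This map is manifestly inverse to $S\mapsto X$, so it is a bijection for each fixed $m$.

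Finally I would assemble the union and fix its index set. Disjointness across values of $m$ is automatic once $m$ is intrinsic to $S$: the sets $\sE(m-i,i,\cdot)$ consist of words of pairwise distinct lengths $m-i$, so it suffices to know that the border $W=S^{[m]}$ returned by Theorem \ref{ThA5} is the unique one compatible with $V\ne\EMPTY$ and $W\succ V$. I would derive this from the uniqueness already built into Theorems \ref{ThA1}--\ref{ThA2} (exactly as in the proof of Corollary \ref{CorA41}), supplemented by a Fine--Wilf argument (Theorem \ref{ThFW}) that rules out a second admissible border. The range of $m$ then comes from the length inequalities: $V\ne\EMPTY$ forces $m\le\lfloor(n+i-1)/2\rfloor$, the proper containment $W\succ V$ forces $3m>n+i$, and the dichotomy of Theorem \ref{ThA5} forces $m>2i$, which together produce the lower cutoff $\max(2i,\,1+\lfloor(n+i)/3\rfloor)$.

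The hard part will be this endpoint bookkeeping together with the uniqueness of the border, and I expect the smallest admissible $m$---where $|W|$ and $|V|$ are closest---to be the delicate case. There the two copies of $W$ in $W\,W^{[n+i-2m]}\,W$ overlap heavily, and this is precisely the regime where Theorem \ref{ThFW} can force $W$ to be periodic and hence either collapse that value of $m$ (an empty $\sE$) or eliminate a competing factorization. Checking small cases to confirm which boundary value of $m$ actually survives, and verifying that it matches the stated floor expressions at both ends of the range, is where I would concentrate the remaining effort.
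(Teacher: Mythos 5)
Your proposal is correct and follows essentially the same route as the paper: you apply Theorem \ref{ThA5} to $S^{[i]}S=WVW$, extract the length-$(m-i)$ word (the paper's $U$, your border $X$), verify membership in $\sE(m-i,i,n+i-2m)$, construct the identical inverse map $X\mapsto XVW$, and read off the same three length constraints on $m$. The uniqueness you defer to Theorems \ref{ThA1}--\ref{ThA2} is exactly how the paper settles it (Theorem \ref{ThA2} alone suffices, so no Fine--Wilf supplement is needed), and the endpoint delicacy you flag at the lower cutoff is genuine but is present in the paper itself: its proof only yields $m\ge 1+\lfloor(n+i)/3\rfloor$, matching Corollary \ref{Cor13}, rather than the strict inequality in the theorem's statement.
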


\begin{cor}\label{Cor13}
(i) For $1 \le i < n/5$, there is a bijection between $\sA(n,i) \setminus \sB(n,i)$ and
the disjoint union of  $\sE(m-i, i, n+i-2m)$, where 
$2i < m \le (n+i-1)/2 $.

(ii) For $n/5 \le i < n/3$, there is a bijection between $\sA(n,i) \setminus \sB(n,i)$ and
the disjoint union of  $\sE(m-i, i, n+i-2m)$, where 
$ (n+i)/3  < m \le (n+i-1)/2 $.

(iii) For $n/3 \le i \le n/2$, $\sB(n,i)$ is empty.

(iv) For $n/2 < i <n$, there is a bijection between $\sA(n,i) \setminus \sB(n,i)$
and $\sB(i,n-i)$. 
\end{cor}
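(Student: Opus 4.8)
The plan is to establish part (iii) by contradiction, assuming some $S \in \sB(n,i)$ exists with $n/3 \le i \le n/2$ and producing a forbidden square suffix. Writing $S = PQ$ with $Q := S^{[i]}$ the length-$i$ suffix (so $|Q|=i$, $|P|=n-i$), the range constraint becomes $i \le |P| = n-i \le 2i$, so that $P$ is at least as long as $Q$ and at most twice as long. Membership in $\sB(n,i)$ says $\CN(W)=1$ for
\[
 W ~=~ S\,S^{[i]}\,S ~=~ P\,Q\,Q\,P\,Q ~=~ P\,Q^2\,P\,Q\,,
\]
so that $W$ carries a conspicuous internal square $Q^2$ sitting just after the prefix $P$. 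The whole thrust of the argument is to show that, in this range of $i$, the internal square is forced to recur as a \emph{suffix} square of $W$, contradicting $\CN(W)=1$ and thereby showing that no such $S$ can exist.

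First I would exploit $\CN(S)=1$: since $S=PQ$ has no square suffix, and since $|P|\ge|Q|$, writing $P = P_0R$ with $|R|=|Q|=i$ gives $W = P_0\,R\,Q\,Q\,P_0\,R\,Q$, and I would compare the tail $\ldots P_0\,R\,Q$ against the earlier translate created by the central $Q^2$. The tool is the Fine--Wilf theorem (Theorem \ref{ThFW}), deployed exactly as in the proofs of Theorems \ref{ThRobust} and \ref{ThCK}: one sets up two competing periods --- the length $|PQ|=n$ coming from the two copies of $S$, and the period length dictated by the placement of $Q^2$ --- checks that the overlap inequality \eqn{EqFW} holds throughout $n/3 \le i \le n/2$, and concludes that the two periods are commensurate. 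Commensurability would then yield a short square that is already a suffix of $S$, contradicting $\CN(S)=1$ (this is the standard Fine--Wilf escape used repeatedly in Theorem \ref{ThCK}); alternatively it produces a square suffix of $W$ itself, contradicting $\CN(W)=1$. Either horn would force $\sB(n,i)$ to be empty. The remaining parts (i), (ii) and (iv), which express $\sA(n,i)\setminus\sB(n,i)$ through the $\sE$- and $\sB$-sets, follow from Theorems \ref{ThA5}--\ref{ThA7} by precisely the bookkeeping used to deduce Corollary \ref{CorA41}; part (iii) is what closes the middle range $n/3 \le i \le n/2$ left uncovered by Theorems \ref{ThA6} and \ref{ThA7}, so that the four cases partition $1 \le i < n$.

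The step I expect to be the main obstacle is exactly this propagation of the interior square $Q^2$ of $W = P\,Q^2\,P\,Q$ out to a genuine tail square. In the situations governed by Theorems \ref{ThA6} and \ref{ThA7}, the relevant repeat is pinned down by the clean dichotomy $|S|=|Y|$ or $|S|>2|Y|$ of Theorem \ref{ThA5}; here the square lives in the interior, and its interaction with the block $P_0R$ must be controlled directly. Verifying that the Fine--Wilf overlap bound \eqn{EqFW} is met uniformly across the whole interval $n/3 \le i \le n/2$ --- and not merely at the endpoints $i=n/3$ and $i=n/2$, where the period lengths line up most favourably --- is the delicate combinatorial heart of the matter, and is where I would expect the bulk of the effort (and any residual edge cases at the boundaries of the range) to concentrate.
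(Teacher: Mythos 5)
Your handling of parts (i), (ii) and (iv) --- feeding Theorems \ref{ThA5}--\ref{ThA7} through the same bookkeeping as in Corollary \ref{CorA41} --- is exactly what the paper does (it states that the deduction ``is similar to the proof of Corollary \ref{CorA41}'' and omits it), so those parts are fine. The genuine problem is part (iii), which you single out as the case needing a fresh argument and propose to prove by assuming $S \in \sB(n,i)$, writing $W = S\,S^{[i]}\,S = P\,Q^2\,P\,Q$, and using Fine--Wilf (Theorem \ref{ThFW}) to propagate the interior square $Q^2$ to a square suffix of $W$. This cannot be carried out, and not merely because you leave the key overlap verification as an acknowledged obstacle: the interior square simply does not force a suffix square in this band. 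Concretely, take $n=4$, $i=2$ (so $n/3 \le i \le n/2$) and $S = 2\,2\,3\,2$; then $\CN(S)=1$, $Q = S^{[2]} = 3\,2$, and $W = 2\,2\,3\,2\,3\,2\,2\,2\,3\,2$ has $\CN(W)=1$ (check the five possible suffix-block lengths), so $S \in \sB(4,2)$. Likewise $2\,3 \in \sB(2,1)$ and $2\,3\,2\,2\,2\,2\,3 \in \sB(7,3)$, the latter strictly inside the band. So the literal claim ``$\sB(n,i)$ is empty'' is false, and no Fine--Wilf computation will establish it; indeed the paper's own recursion requires nonzero values of $b$ here, e.g.\ $p'(5,1) = a(5,4) = c(5,1) - b(2,1) = 12 - 2 = 10$ forces $b(2,1)=2$.

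What is true for $n/3 \le i \le n/2$, and what the intended derivation from Theorem \ref{ThA5} actually yields, is that $\sA(n,i) \setminus \sB(n,i)$ is empty --- the printed ``(iii)'' should be read this way, making it the degenerate case of the same description of $\sA(n,i)\setminus\sB(n,i)$ that parts (i), (ii), (iv) give. The mechanism is precisely the dichotomy you cite for the other parts but fail to apply here: by Theorem \ref{ThA5} and the remark following its proof, the hypotheses $X \succ Y$, $\CN(YX)=1$, $\CN(XYX)>1$ force either $|S|=|Y|$, equivalent to $2|Y| > |X|$, or $|S| > 2|Y|$, equivalent to $3|Y| < |X|$; when $|X|/3 \le |Y| \le |X|/2$ neither alternative is available, so $\CN(YX)=1$ already implies $\CN(XYX)=1$, i.e.\ $\sA(n,i) \subseteq \sB(n,i)$. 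The middle band therefore needs no interior-square propagation at all --- one line from the dichotomy settles it --- and the ``delicate combinatorial heart'' where you budget the bulk of your effort is an attempt to verify a statement that a single small test case (say $S = 2\,2\,3\,2$) would have refuted.
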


\begin{cor}\label{Cor14}
(i)  For $1 \le i < n/3$,
\beql{Eq14a}
b(n,i) ~=~ a(n,i) ~-~ 
\sum_{m = \max ( 2i, 1+\lfloor (n+i)/3 \rfloor )}^{\lfloor (n+i-1)/2 \rfloor} e(m-i,i,n+i-2m)\,.
\eeq
(ii)
\beql{Eq14b}
b(n,i) ~=~ 0 \quad \mbox{~for~ } n/3 \le i \le n/2 \,.
\eeq 
(iii) 
\beql{Eq14c}
b(n,i) = a(n,i) - b(i,n-i) \quad \mbox{~for~ } n/2 < i <n \,.
\eeq
\end{cor}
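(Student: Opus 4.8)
The plan is to obtain Corollary \ref{Cor14} by passing from the set bijections of Corollary \ref{Cor13} to their cardinalities. The one fact I would record at the outset is the inclusion $\sB(n,i) \subseteq \sA(n,i)$: if $S \in \sB(n,i)$ then $\CN(S\,S^{[i]}\,S)=1$, and since $S^{[i]}\,S$ is a suffix of $S\,S^{[i]}\,S$, any square suffix of $S^{[i]}\,S$ would also be a square suffix of $S\,S^{[i]}\,S$; hence $\CN(S^{[i]}\,S)=1$ and $S\in\sA(n,i)$. Consequently $\#(\sA(n,i)\setminus\sB(n,i)) = a(n,i)-b(n,i)$ in every case, which is what lets me turn set differences into numerical differences.

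Parts (ii) and (iii) are then immediate. Part (ii) is just Corollary \ref{Cor13}(iii): for $n/3 \le i \le n/2$ the set $\sB(n,i)$ is empty, so $b(n,i)=0$, giving \eqn{Eq14b}. Part (iii) uses Corollary \ref{Cor13}(iv): for $n/2 < i < n$ there is a bijection $\sA(n,i)\setminus\sB(n,i) \leftrightarrow \sB(i,n-i)$, so $a(n,i)-b(n,i)=b(i,n-i)$, which rearranges to \eqn{Eq14c}.

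For part (i) I would combine the two sub-cases of Corollary \ref{Cor13}. In each, $\sA(n,i)\setminus\sB(n,i)$ is a disjoint union of sets $\sE(m-i,\,i,\,n+i-2m)$, so taking cardinalities gives $a(n,i)-b(n,i) = \sum_m e(m-i,\,i,\,n+i-2m)$ over the stated range of $m$. The point to verify is that the two ranges --- $2i<m\le(n+i-1)/2$ for $i<n/5$ and $(n+i)/3<m\le(n+i-1)/2$ for $n/5 \le i < n/3$ --- can both be written with the single lower limit $\max(2i,\,1+\lfloor(n+i)/3\rfloor)$. This rests on the admissibility constraints for the arguments of $\sE$: a term $\sE(m-i,\,i,\,n+i-2m)$ names a genuinely-indexed (hence possibly nonempty) set only when $i<m-i$ and $1\le n+i-2m<m$, i.e. only when $m>2i$ and $(n+i)/3<m\le(n+i-1)/2$; any $m$ outside this range names an empty set and contributes $0$. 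Since $2i<(n+i)/3 \iff i<n/5$, for $i<n/5$ the binding lower bound is $m>(n+i)/3$, i.e. $m\ge 1+\lfloor(n+i)/3\rfloor$, while the terms $2i<m\le(n+i)/3$ permitted by Corollary \ref{Cor13}(i) are all empty; for $n/5\le i<n/3$ the binding lower bound is $m>2i$, and the one extra term $m=2i$ (where $\sE(i,i,\cdot)$ has equal first two indices) is empty. In both cases the nonzero terms are exactly those with $m \ge \max(2i,\,1+\lfloor(n+i)/3\rfloor)$, so extending each sum to this common lower limit adds only zeros, and solving for $b(n,i)$ yields \eqn{Eq14a}.

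The computation is routine; the only real care needed --- and the step I would treat as the main obstacle --- is the index bookkeeping in part (i): checking that the admissibility constraints on the arguments of $\sE$ translate into precisely $m>2i$ and $(n+i)/3<m\le(n+i-1)/2$, and that the crossover $2i=(n+i)/3$ occurs exactly at $i=n/5$, so that the two disjoint-union descriptions of Corollary \ref{Cor13} glue into the single summation range of \eqn{Eq14a} with neither omission nor double-counting at the boundary $i=n/5$.
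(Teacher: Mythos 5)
Your proposal is correct and takes essentially the same route as the paper: Corollary \ref{Cor14} is obtained from the disjoint-union bijections of Corollary \ref{Cor13} (equivalently Theorem \ref{ThA7}) by passing to cardinalities, exactly as Corollary \ref{CorA42} is read off from Corollary \ref{CorA41}, a step the paper treats as immediate and omits. The bookkeeping you supply --- noting $\sB(n,i)\subseteq\sA(n,i)$ so that set differences become numerical differences, and checking that the boundary terms (e.g.\ $m=2i$, or $2i<m\le (n+i)/3$ when $i<n/5$) index empty sets, so the two ranges merge into the single lower limit $\max(2i,\,1+\lfloor (n+i)/3\rfloor)$ --- is precisely the detail left to the reader.
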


Observing that $p'(n/2,1) = a(n/2,n/2-1)$,
equations \eqn{Eqcn1} and \eqn{EqA4a} through \eqn{Eq14c} can be used
recursively  to compute values of $c(n,1)$, using brute force to determine
$e(m,i,j)$ only for relatively small values of $m$ (see \S\ref{SecCg}).

We have briefly investigated the possibility of 
generalizing the approach in this section to
deal with curling numbers $k$ greater than one.
The following theorems replace
Theorems \ref{ThA1} and \ref{ThA2}:

\begin{thm}\label{ThSick2}
Suppose $S \in \sP(n,k) \setminus \sP'(n,k)$, where $k>1$.
Then there exist $X$ and $T$ with $S=X(TX)^k$ and $S \succ T$.
\end{thm}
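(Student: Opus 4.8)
The plan is to mirror the proof of Theorem~\ref{ThA1}, but working with $(k+1)$-st powers in place of squares, and to read the decomposition directly off the suffix structure forced by non-robustness. Since $S\in\sP(n,k)\setminus\sP'(n,k)$ with $k>1$, the sequence $S$ is not robust, so by Theorem~\ref{ThRobust} the power $S^{k+1}$ has a proper suffix of the form $Z^{k+1}$ that is in fact a proper suffix of $S^2$ (I rename the repeated block $Z$ to avoid a clash with the $T$ in the conclusion). Writing $z:=|Z|$, the first thing I would pin down are the two length inequalities $kz<n<(k+1)z$. The upper bound $(k+1)z>n$ is forced, since otherwise $Z^{k+1}$ would itself be a suffix of $S$, giving $\CN(S)\ge k+1$ and contradicting $\CN(S)=k$. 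The lower bound $kz<n$ comes from the Fine--Wilf theorem (Theorem~\ref{ThFW}) exactly as in the proof of Theorem~\ref{ThRobust}: if $(k+1)z\ge n+z-1$, then $S$ and $Z$ would be powers of a common word, making $S$ imprimitive, a contradiction.

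Next I would extract the decomposition. Because $Z^{k+1}$ and the second copy of $S$ are both suffixes of $S^2$ and $|S|=n<(k+1)z=|Z^{k+1}|$, the whole of $S$ is a suffix of $Z^{k+1}$. Setting $w:=n-kz$ (which lies strictly between $0$ and $z$ by the inequalities above) and splitting $Z=VW$ with $|W|=w$ and $|V|=z-w$, the length-$n$ suffix of $Z^{k+1}=Z\cdot Z^k$ is precisely $S=WZ^k=W(VW)^k$. This is the desired shape $S=X(TX)^k$ with $X:=W$ and $T:=V$, both nonempty.

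The last and most important step is to verify $S\succ T$, i.e.\ that $T=V$ is a proper suffix of $S$. Here the clean observation is that $Z^{k+1}=VW\cdot Z^k=V\,(WZ^k)=VS$. Since $Z^{k+1}=VS$ is a suffix of $S^2=SS$, and $|V|=(k+1)z-n$ satisfies $0<|V|<n$, I would compare the last $|V|+n$ characters of $SS$ with $VS$: the relevant suffix of $SS$ is the length-$|V|$ suffix of the first copy of $S$ followed by the entire second copy $S$, and matching this against $VS$ forces $V$ to equal that length-$|V|$ suffix of $S$. Hence $V$ is a nonempty proper suffix of $S$, which is exactly $S\succ T$.

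I expect the main obstacle to be the bookkeeping in this final step: one must be certain that $Z^{k+1}$ genuinely straddles the join between the two copies of $S$ in $S^2$, so that its overhang past the second copy is a bona fide suffix of the earlier copy rather than something occurring internally. This is precisely what the sharp inequalities $kz<n<(k+1)z$ guarantee, and it is also where primitivity of $S$ enters (through the Fine--Wilf lower bound). Everything else is a routine unwinding of powers, and the identity $Z^{k+1}=VS$ is what makes the suffix claim transparent rather than requiring a positional argument.
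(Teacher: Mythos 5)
Your proof is correct and takes essentially the same route as the paper's: both invoke Theorem~\ref{ThRobust} to place a proper suffix $Z^{k+1}$ inside $S^2$, use Fine--Wilf (Theorem~\ref{ThFW}) plus primitivity to pin down $kz < n < (k+1)z$, and then read off the decomposition $S = X(TX)^k$ (your $W(VW)^k$ with $Z=VW$ is exactly the paper's $Q=TX$). The only cosmetic difference is the final step, where the paper gets $S \succ T$ by cancelling $Q^k$ from $PQ^{k+1} = SXQ^k$ to obtain $PT = S$, which is just different bookkeeping for your identity $Z^{k+1} = VS$.
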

\begin{proof}
By Theorem \ref{ThRobust}, $S^2 = PQ^{k+1}$ with $P \ne \EMPTY $.
If $(k+1)|Q| \ge n+|Q|-1$, then Theorem \ref{ThFW}
would imply that $S$ is periodic.
So $k|Q|<n-1$, and $k$ copies
of $Q$ lie properly inside $S$,
say $S=XQ^k$ with $|X|<|Q|$, $X \ne \EMPTY $.
Define $T$ by $Q=TX$ and we have $S=X(TX)^k$.
Also $PQ^{k+1}=SXQ^k$, so $PQ=PTX=SX$ and $S \succ T$.
\end{proof}

\begin{thm}\label{ThSick3}
The representation $S=X(TX)^k$ obtained in Theorem \ref{ThSick2}
is unique.
\end{thm}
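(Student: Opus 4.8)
The plan is to reduce uniqueness to the statement that the length $q := |TX|$ of the repeated block $Q := TX$ is determined by $S$: once $q$ is known, $Q^k$ is forced to be the length-$kq$ suffix of $S$, which then determines $X$ as the complementary prefix of length $n-kq$ and $T$ as the length-$(q-|X|)$ prefix of $Q$. So it is enough to show that only one period length $q$ can occur.

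First I would record the two constraints satisfied by any representation of the form produced by Theorem \ref{ThSick2}. Writing $Q=TX$ and $q=|Q|$, the conditions $X \ne \EMPTY$ and $|X|<q$ (both established in the proof of Theorem \ref{ThSick2}, where $k|Q|<n-1$) give $0 < n-kq < q$, that is $\frac{n}{k+1} < q < \frac{n}{k}$. Moreover $Q$ must be primitive: if $Q=Z^j$ with $j\ge 2$ then $S=XZ^{jk}$ would force $\CN(S)\ge jk>k$, contradicting $S\in\sP(n,k)$. These two facts hold for every admissible representation and drive the whole argument.

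Now suppose $S = X_1 Q_1^k = X_2 Q_2^k$ are two admissible representations with $q_1:=|Q_1|\le q_2:=|Q_2|$. If $q_1=q_2$ then $Q_1$ and $Q_2$ coincide, being the common length-$kq_1$ suffix of $S$, and we are done; so assume $q_1<q_2$. The key step is to pass to $S^2$. As in the proof of Theorem \ref{ThSick2}, each $Q_i^{k+1}$ is a proper suffix of $S^2$: reading off the last $(k+1)q_i$ symbols of $S^2 = X_i(T_iX_i)^k\,X_i(T_iX_i)^k$ recovers $Q_i\cdot Q_i^k$, and $(k+1)q_i < (k+1)\frac{n}{k} \le 2n$. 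Since $(k+1)q_1 < (k+1)q_2$ and both are suffixes of $S^2$, the shorter power $Q_1^{k+1}$ is a common suffix of $Q_1^{k+1}$ and $Q_2^{k+1}$ of length $(k+1)q_1$. I would then apply the Fine-Wilf theorem (Theorem \ref{ThFW}) with $X=Q_1$, $Y=Q_2$: once the length bound $(k+1)q_1 \ge q_1+q_2-\gcd(q_1,q_2)$ is verified, it yields $Q_1=Z^g$, $Q_2=Z^h$ with $|Z|=\gcd(q_1,q_2)$. Primitivity of $Q_1$ forces $g=1$, hence $q_1\mid q_2$ and $Q_2=Q_1^{q_2/q_1}$ with $q_2/q_1\ge 2$, contradicting primitivity of $Q_2$. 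Thus $q_1=q_2$, and uniqueness follows.

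The main obstacle — and the reason the argument is run on $S^2$ rather than on $S$ itself — is verifying the Fine-Wilf length inequality $(k+1)q_1 \ge q_1+q_2-\gcd(q_1,q_2)$. The constraints $q_2<\frac{n}{k}$ and $q_1>\frac{n}{k+1}$ give $q_2 < \frac{k+1}{k}\,q_1 = q_1+\frac{q_1}{k}$, so it suffices to check $q_2 \le kq_1+1$, and indeed $q_1+\frac{q_1}{k}\le kq_1$ for every $k\ge 2$ (equivalently $k^2-k-1\ge 0$). It is worth noting that the weaker common suffix of length $kq_1$ coming from the fact that each $Q_i^k$ is merely a suffix of $S$ is not enough: already at $k=2$ it would require $q_2\le q_1+\gcd(q_1,q_2)$, which the bound $q_2<\tfrac32 q_1$ does not guarantee. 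Using the extra copy of $Q$ exposed inside $S^2$ is precisely what makes the inequality hold uniformly in $k$.
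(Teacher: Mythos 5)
The paper never actually writes out a proof of Theorem \ref{ThSick3} --- it is stated and then skipped (``we omit the somewhat tedious proof''), so your argument can only be judged on its own terms. Its skeleton is sound: reducing uniqueness to uniqueness of the period length $q=|TX|$, playing two rival periods against each other via the Fine--Wilf theorem (Theorem \ref{ThFW}), and killing the common root $Z$ with primitivity is exactly the right mechanism, and your closing observation about why the common suffix of length $kq_1$ inside $S$ alone is insufficient when $k=2$ is correct and genuinely insightful.

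There is, however, a gap, and it sits at the step you treat as routine: ``each $Q_i^{k+1}$ is a proper suffix of $S^2$.'' Reading off the last $(k+1)q_i$ letters of $S^2$ gives (last $|T_i|$ letters of $S$)$\,X_iQ_i^k$, and this equals $Q_i^{k+1}=T_iX_iQ_i^k$ only if you already know that $T_i$ is a suffix of $S$. That property is part of the \emph{conclusion} of Theorem \ref{ThSick2} for the representation constructed there, but it cannot be assumed for a rival representation $S=X_2(T_2X_2)^k$; assuming it is assuming part of what uniqueness must deliver. This matters because the intended content of the theorem (see the sentence following it in the paper: among all ways of writing $S=XY^k$, ``exactly one of these $Y$'s has the corresponding $X$ as a suffix'') quantifies over \emph{all} representations in which $X$ is a suffix of $Y$, with no suffix condition on $T$. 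As written, your proof establishes uniqueness only within the smaller class of representations that also satisfy $S\succ T$. Fortunately the repair is easy and simplifies everything: the defining property $Q_i=T_iX_i$ alone gives $Q_i^{k+1}=T_iX_iQ_i^k=T_iS$, so $S$ \emph{itself} is a common suffix of $Q_1^{k+1}$ and $Q_2^{k+1}$, of length $n$. Since $X_i\ne\EMPTY$ (else $S=Q_i^k$ would be imprimitive), we get $q_i<n/k\le n/2$, hence $n>q_1+q_2-\gcd(q_1,q_2)$, and Theorem \ref{ThFW} applies directly; primitivity of $Q_1$ and $Q_2$ then forces $q_1=q_2$ exactly as in your argument, with no passage to $S^2$ at all. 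A minor related point: the constraints $X\ne\EMPTY$ and $|X|<q$ for a rival representation also need their own one-line justifications from primitivity of $S$ (as you did for the primitivity of $Q$), rather than a citation of the proof of Theorem \ref{ThSick2}.
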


Since we will not make any use of Theorem
\ref{ThSick3}, we omit the somewhat tedious proof.

Because $S \in \sP(n,k)$, we know that $S$ can be written as $XY^k$,
where $Y$ is primitive, possibly in several ways. Theorems \ref{ThSick2}
and \ref{ThSick3} say that if $S$ is not robust, then
exactly one of these $Y$'s has the corresponding $X$ as a suffix.
We have not pursued the generalizations of Theorems \ref{ThA5}--\ref{ThA7}
and Corollaries \ref{Cor13}--\ref{Cor14} to this case.

\subsection{ The values of $c(n,k)$ for $k \ge \lfloor \sqrt{n} \rfloor$ }\label{SecCe}

The second main theorem of this section gives an expression for $c(n,k)$ in the 
range $k \ge \lfloor \sqrt{n} \rfloor$
that involves the partial sum function $q(m,k)$.

\begin{thm}\label{ThSQRT}
We have $c(n,n)=2$ for all $n$, $c(n,n-1)=2$ for $n \ge 2$,
and, for $n \ge 4$ and $k \ge \lfloor \sqrt{n} \rfloor$,
\beql{EqTh1a}
c(n,k,\pi) ~=~
\begin{cases}
2^{n-(k+1)\pi} \, (2^{\pi}-1) \, q(\pi,k-1),
 & \mbox{if~} 1 \le \pi \le \lfloor \frac{n}{k+1} \rfloor \,, \\
2^{n-k\pi} \, q(\pi, k-1),
 & \mbox{if~} \lceil \frac{n+1}{k+1} \rceil \le \pi \le \lfloor \frac{n}{k} \rfloor \,, \\
\end{cases}
\eeq
and $c(n,k) =  \sum_{\pi=1}^{\lfloor n/k \rfloor} c(n,k,\pi)$.
\end{thm}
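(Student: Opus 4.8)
The plan is to prove the formula by directly counting the sequences in $\sC(n,k,\pi)$ through their canonical decomposition, and then to sum over $\pi$. By Lemma~\ref{LemPi}, every $S \in \sC(n,k,\pi)$ has a unique shortest block $Y$ with $|Y| = \pi$, and this $Y$ is primitive with $\CN(Y) \le k-1$ (note $k \ge 2$ throughout, since $n \ge 4$ forces $k \ge \lfloor\sqrt{n}\rfloor \ge 2$); hence $Y \in \sQ(\pi,k-1)$, a set of size $q(\pi,k-1)$. Writing $S = XY^k$ with $|X| = n - k\pi$, the task reduces to counting, for each fixed $Y$, the admissible prefixes $X$, and to showing this count is the same for every $Y \in \sQ(\pi,k-1)$ so that it factors out as the stated power of $2$ times $q(\pi,k-1)$. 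The admissibility conditions on $X$ are (a) $\CN(XY^k) = k$, i.e. no proper suffix of $S$ is a $(k+1)$st power, and (b) $\pi$ is minimal, i.e. no block shorter than $\pi$ already yields curling number $k$.

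The only ``aligned'' obstruction to (a) is that the final $\pi$ symbols of $X$ coincide with $Y$, which would make $Y^{k+1}$ a suffix. When $|X| \ge \pi$ --- equivalently $(k+1)\pi \le n$, i.e. $\pi \le \lfloor n/(k+1)\rfloor$ (Case 1) --- we forbid exactly this one block, leaving $2^{\pi}-1$ choices for the final length-$\pi$ block of $X$ and $2^{\,n-(k+1)\pi}$ free choices for the preceding symbols. When $|X| < \pi$ --- equivalently $\lceil (n+1)/(k+1)\rceil \le \pi \le \lfloor n/k\rfloor$ (Case 2) --- no length-$\pi$ block fits before $Y^k$, so this obstruction cannot occur and all $2^{\,n-k\pi}$ choices of $X$ survive. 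This produces exactly the two branches of \eqn{EqTh1a}, provided no further obstruction intervenes.

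The substantive step is to show that no other obstruction arises: for the constructed $X$ there is neither a block $W$ of length $w \ne \pi$ with $W^{k+1}$ a suffix (which would make $\CN(S) > k$), nor a block of length $w < \pi$ with $W^{k}$ a suffix (which would make the true minimal block length smaller than $\pi$). In each such configuration $W^{k+1}$ (resp.\ $W^k$) and $Y^k$ are both suffixes of $S$, hence share a common suffix $U$ whose length is the smaller of the two. The hypothesis $k \ge \lfloor\sqrt{n}\rfloor$ together with $k\pi \le n$ bounds $\pi$ (in the relevant range $n < k(k+1)$ one gets $\pi \le k$), and this is precisely the inequality needed to guarantee $|U| \ge w + \pi - \gcd(w,\pi)$, so that Theorem~\ref{ThFW} applies. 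It yields a common root $Z$ with $|Z| = \gcd(w,\pi)$: if $|Z| < \pi$ then $Y = Z^{g}$ contradicts the primitivity of $Y$, while if $\pi \mid w$ the configuration collapses to an aligned $Y^{k+1}$ suffix, already excluded by the choice of $X$ in Case 1 and impossible in Case 2. Hence the counts above are exact and independent of the chosen $Y$, so $c(n,k,\pi) = q(\pi,k-1)$ times the stated power of $2$.

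Finally, since Lemma~\ref{LemPi} assigns to each $S \in \sC(n,k)$ a single value of $\pi$, the sets $\sC(n,k,\pi)$ partition $\sC(n,k)$ and $c(n,k) = \sum_{\pi=1}^{\lfloor n/k\rfloor} c(n,k,\pi)$ follows at once; the base cases $c(n,n)=2$ (only $2^n$ and $3^n$) and $c(n,n-1)=2$ (only $2\,3^{\,n-1}$ and $3\,2^{\,n-1}$, since $2(n-1) > n$ forces $|Y|=1$) are checked directly. I expect the main obstacle to be the Fine--Wilf step, specifically verifying that the length inequality $|U| \ge w + \pi - \gcd(w,\pi)$ really holds throughout the stated $\pi$-ranges, including the boundary values $\pi = \lfloor n/(k+1)\rfloor$ and $\pi = \lfloor n/k\rfloor$ where $\pi$ is as large as $k$ permits and the inequality is tight. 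There one must argue with the floor and ceiling functions rather than the heuristic $\pi \le k$, and treat the shortest-period cases --- in particular $w = 1$, where $W^{k+1}$ is a run of a single symbol --- separately, using $\CN(Y) \le k-1$ to derive the contradiction.
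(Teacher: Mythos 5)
Your architecture is the same as the paper's: decompose each $S \in \sC(n,k,\pi)$ via Lemma \ref{LemPi} as $XY^k$ with $Y \in \sQ(\pi,k-1)$ unique, count admissible prefixes (the last length-$\pi$ block of $X$ playing the role of the paper's ``blocker'' $B \ne Y$ when $|X| \ge \pi$), and kill every other obstruction with Fine--Wilf. Indeed your obstruction accounting is more scrupulous than the paper's: you explicitly demand minimality of $\pi$ (no $W^k$ suffix with $w = |W| < \pi$), a condition the paper's proof never checks, and you correctly note that when $\pi \mid w$ Fine--Wilf does not contradict the primitivity of $Y$ but instead collapses the configuration to the aligned $Y^{k+1}$ suffix, which only the blocker rules out --- a case the paper's ``$Y$ is imprimitive, a contradiction'' passes over in silence.

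The gap is exactly where you flagged it, and the route you propose for closing it does not work as stated. The hypothesis $k \ge \lfloor\sqrt n\rfloor$ gives only $n < (k+1)^2$, hence $\pi \le \lfloor n/k\rfloor \le k+2$, not $\pi \le k$ (for example $n=8$, $k=2$, $\pi=4$ is a legitimate boundary case of the second branch), and no uniform bound on $\pi$ delivers $|U| \ge w + \pi - \gcd(w,\pi)$ in one stroke. What the paper does instead is split on which suffix is shorter. If $(k+1)w \ge k\pi$, the common suffix has length $k\pi$ and one needs $k\pi \ge w+\pi-1$, which follows from a bound on $w$ alone: $w < \pi$ when $n < (k+1)\pi$ (since $(k+1)w \le n$), and $w \le k$ when $n \ge (k+1)\pi$ (since $(k+1)w \le n < (k+1)^2$). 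If $(k+1)w < k\pi$, the common suffix has length $(k+1)w$ and one needs only $kw \ge \pi-1$, which is precisely where $\CN(Y)\le k-1$ enters: if $kw \le \pi$ then $W^k$ would be a suffix of $Y$, forcing $\CN(Y) \ge k$. Your minimality obstruction (common suffix of length only $kw$) needs the same two ingredients plus a genuinely tight verification: one must show $(k-1)w \ge \pi-1$, which follows from $\pi \le k+2$ when $k \ge 3$ (or when $w=1$, via $k=kw>\pi$), while for $k=2$ it holds only with equality, by inspecting the few pairs $(\pi,w)$ with $w<\pi\le 4$ and $2w>\pi$. So your plan is completable and, once completed, lands essentially on the paper's proof; but the quantitative heart of the argument is missing from the proposal, and the heuristic bound you lean on must be replaced by this case analysis.
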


\begin{proof}
We assume $n \ge 4$ and $k \ge \lfloor \sqrt{n} \rfloor \ge 2$. 
Note that  $k \ge \lfloor \sqrt{n} \rfloor$
is equivalent to $k+1 > \sqrt{n}$. 

We consider the cases $n \le \pi(k+1)-1$ and $n \ge (k+1)\pi$ separately.

First, if $n \le \pi(k+1)-1$ , we have
$$
\left \lceil \frac{n+1}{k+1} \right \rceil \le \pi \le \left \lfloor \frac{n}{k} \right \rfloor\,.
$$
Let us write $S=X\,Y^k$, where $Y$ is minimal and has length $\pi$.
Then $n \le \pi(k+1)-1$ implies $|X| < \pi$.
By Lemma \ref{LemPi}, $Y \in \sQ(\pi,k-1)$.
There are $2^{n-\pi k}$ choices for $X$, and $q(\pi, k-1)$ choices for $Y$,
and we claim that the resulting sequence $X\,Y^k$ always has curling number $k$.
For suppose it has curling number $> k$, so that we have
$X\,Y^k = U\,V^{k+1}$, with $u=|U|$, $v=|V|$.
There are two sub-cases. If $(k+1)v \ge k\pi$, then we have $(k+1)\pi > |S| \ge (k+1)v$, implying $\pi > v$.
The two different representations of $S$ have a common suffix $Y^k$ of length $k\pi$,
which, since $k \ge 2$, satisfies
\beql{EqTh1d}
k\pi ~\ge~ v+\pi -1\,.
\eeq
By Theorem \ref{ThFW}, $Y=Z^g$, $V=Z^h$, with $g>h$, so $g\ge 2$, and $Y$ is imprimitive, a contradiction.
On the other hand, suppose $(k+1)v < k\pi$. Again $\pi > v$. Since $\CN(Y)<k$, $kv>\pi$.
Now the common suffix has length $(k+1)v$,  our inequalities imply
\beql{EqTh1c}
(k+1)v ~\ge~ v+\pi-1\,,
\eeq 
and, again by Theorem \ref{ThFW},  $Y$ is imprimitive, a contradiction.
So the number of sequences $S$ of this type is
$2^{n-k \pi} q(\pi,k-1)$, as claimed.

Second, if $n \ge (k+1)\pi$, we have 
$$
1 \le \pi \le \left \lfloor \frac{n}{k+1} \right\rfloor\,.
$$
Let us write
\beql{EqTh1b}
S ~=~ X B Y^k \,,
\eeq
where $X$ has length $n-(k+1)\pi$, $B$ has length $\pi$, and $Y \in \sQ(\pi, k-1)$.
Certainly $B \ne Y$ ($B$ stands for ``blocker'', the purpose of which is to
ensure that $Y$ is repeated only  $k$ times).
There are potentially $2^{n-(k+1)\pi}$ choices for $X$,
$2^\pi-1$ choices for $B$, and $q(\pi,k-1)$ choices for $Y$.
We claim that the assumption $k \ge \lfloor \sqrt{n} \rfloor$ guarantees that
all choices result in a sequence with curling number $k$.
For suppose on the contrary that $S$ (in \eqn{EqTh1b}) is also equal to
$U\,V^{k+1}$, with $u=|U|$, $v=|V|$. Again there are two sub-cases. 
If $(k+1)v \ge k\pi$, then we have
$$
(k+1)^2 ~>~ n ~\ge~ (k+1)v ~\ge~ k\pi\,,
$$
so $k+1>v$, $k \ge v$. The two different representations of $S$ have a common suffix
of length $k\pi$, and our inequalities imply \eqn{EqTh1d}.
On the other hand, suppose $(k+1)v < k\pi$. Again we have $kv > \pi $,  and the common suffix 
satisfies \eqn{EqTh1c}. In both cases Theorem \ref{ThFW} now leads to a contradiction.
This complete the proof of the theorem.
\end{proof}

The formulas in Theorem \ref{ThSQRT} cover a large
portion of the $c(n,k)$ table. However, although
with more work they could be
extended so as to apply to slightly smaller values of $k$,
it seems unlikely that this approach will lead to a formula for
$c(n,k,\pi)$ for small values of $k$.

\subsection{ The difference table $d(n,k)$ }\label{SecCf}

\begin{table}[thbp]
$$
\begin{array}{|r|rrrrrrrrrrrr|}
\hline
n\backslash k & 1 & 2 & 3 & 4 & 5 & 6 & 7 & 8 & 9 & 10 & 11 & 12 \\
\hline
2 &  2 &  -2 &    &    &    &    &    &    &    &    &    &    \\
3 &  0 &  2 &  -2 &    &    &    &    &    &    &    &    &    \\
4 &  2 &  -2 &  2 &  -2 &    &    &    &    &    &    &    &    \\
5 &  0 &  0 &  0 &  2 &  -2 &    &    &    &    &    &    &    \\
6 &  4 &  -2 &  -2 &  0 &  2 &  -2 &    &    &    &    &    &    \\
7 &  0 &  0 &  0 &  0 &  0 &  2 &  -2 &    &    &    &    &    \\
8 &  6 &  -6 &  2 &  -2 &  0 &  0 &  2 &  -2 &    &    &    &    \\
9 &  0 &  6 &  -6 &  0 &  0 &  0 &  0 &  2 &  -2 &    &    &    \\
10 &  10 &  -10 &  0 &  2 &  -2 &  0 &  0 &  0 &  2 &  -2 &    &    \\
11 &  0 &  0 &  0 &  0 &  0 &  0 &  0 &  0 &  0 &  2 &  -2 &    \\
12 &  20 &  -10 &  -4 &  -6 &  2 &  -2 &  0 &  0 &  0 &  0 &  2 &  -2 \\
\hline
\end{array}
$$
\caption{The difference table $d(n,k)$ defined by \eqn{EqDnk}
(for an extended table see \seqnum{A217943}).}
\label{TabDnk}
\end{table}

In the $c(n,k)$ table (Table \ref{Tabcnk}), if we look at
the difference between each row and twice the previous row,
we obtain a much simpler table.\footnote{It was by studying
the $d(n,k)$ table that we were led to Theorems \ref{ThCK} and \ref{ThSQRT}.}
We define
\beql{EqDnk}
d(n,k) ~:=~ 2\,c(n-1,k) ~-~ c(n,k) \,,
\eeq
for $n \ge 2$, $1 \le k \le n-1$, with $d(n,n)=-2$.
The initial values are shown in Table \ref{TabDnk}.
We see that if one ignores
the initial entries in each row,
most of the remaining entries are zero,
except for diagonal lines of pairs of nonzero entries.
More precisely, it appears that
\begin{align}\label{EqDconj}
d(2k,k-1)  & ~=~ -d(2k,k) ~=~ 2, \quad k \ge 4 \,, \nonumber \\
d(3k,k-1)  & ~=~ -d(3k,k) ~=~ 6, \quad k \ge 5 \,, \nonumber \\
d(4k,k-1)  & ~=~ -d(4k,k) ~=~ 12, \quad k \ge 6 \,, \nonumber \\
d(5k,k-1)  & ~=~ -d(5k,k) ~=~ 30, \quad k \ge 7 \,, \nonumber \\
\end{align}
and so on.
Only the first of these diagonal lines can be seen in Table \ref{TabDnk},
but they are all visible in the extended table that is given in 
entry \seqnum{A217943} in \cite{OEIS}. 
These expressions all follow from Theorem \ref{ThSQRT}:

\begin{thm}\label{Thdnk}
In the range $k \ge \lfloor \sqrt{n} \rfloor$, the only nonzero entries
in the $d(n,k)$ table are
\beql{EqDconj2}
d(mk,k-1) ~=~ -d(mk,k) ~=~ q(m,m), \mbox{~for~} m \ge 1, ~k \ge m+2\,.
\eeq
\end{thm}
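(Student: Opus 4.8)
The plan is to derive \eqn{EqDconj2} directly from the explicit formula for $c(n,k,\pi)$ given in Theorem \ref{ThSQRT}, by summing over $\pi$ to get $c(n,k)$ and then computing the difference \eqn{EqDnk}. First I would fix $n=mk$ (the only shape of $n$ that produces nonzero entries, as the diagonals in \eqn{EqDconj} suggest) and observe that it suffices to compute, for each relevant column $k$, the quantity $c(n,k)$ together with $c(n-1,k)$ and $c(n-1,k-1)$, $c(n,k-1)$, since \eqn{EqDnk} only couples consecutive rows. The key simplification is that in the range $k \ge \lfloor \sqrt{n} \rfloor$ the sum $c(n,k)=\sum_{\pi} c(n,k,\pi)$ telescopes: the two branches of \eqn{EqTh1a} (the ``$X B Y^k$'' branch with the factor $2^\pi-1$ and the ``$X Y^k$'' branch without a blocker) are designed so that consecutive terms in $\pi$ largely cancel when one takes $2c(n-1,k)-c(n,k)$.

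The main computation I would carry out is the telescoping. Writing $c(n,k)=\sum_{\pi=1}^{\lfloor n/k\rfloor} c(n,k,\pi)$ and expanding $2c(n-1,k,\pi)-c(n,k,\pi)$ term by term, the factors $2^{n-(k+1)\pi}(2^\pi-1)$ and $2^{n-k\pi}$ behave predictably under $n \mapsto n-1$ (each picks up a factor $1/2$), so within each of the two $\pi$-ranges the contribution to $d(n,k)$ vanishes except at the boundary values of $\pi$ where the range endpoints $\lfloor n/(k+1)\rfloor$, $\lceil (n+1)/(k+1)\rceil$, $\lfloor n/k\rfloor$ shift as $n$ decreases by $1$. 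Because $n=mk$, the endpoint $\pi=m$ is exactly where $\lfloor n/k\rfloor=m$ and the $k$-weighted branch contributes its extremal term $q(m,k-1)$; and the boundary mismatch between row $n$ and row $n-1$ isolates precisely the $\pi=m$ term. Carrying this out should leave $d(mk,k)= -\,q(m,k-1)\cdot(\text{a residual factor})$, and I would check that residual factor collapses to $q(m,m)$ once one uses that $k \ge m+2$ forces $q(m,k-1)=q(m,m)$ (since for $\pi=m$ no primitive sequence of length $m$ can have curling number exceeding $m$, and the partial sums $q(m,\cdot)$ have already saturated at $q(m,m)$ by the time the second index reaches $m-1$).

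For the companion entry $d(mk,k-1)$ I would run the same telescoping in column $k-1$, where now $n=mk$ is a multiple of $(k-1)+1=k$, so it is the blocker-branch endpoint $\pi=m$ with weight $(2^m-1)q(m,k-2)$ that survives; the same saturation argument $q(m,k-2)=q(m,m)$ (valid because $k-1 \ge m+1$) gives $d(mk,k-1)=+q(m,m)$, matching the sign pattern in \eqn{EqDconj2}. The two computations together establish $d(mk,k-1)=-d(mk,k)=q(m,m)$, and a final verification that no other $(n,k)$ with $k \ge \lfloor\sqrt{n}\rfloor$ survives the telescoping — i.e.\ that off the diagonals $n \in \{mk\}$ every boundary term cancels — completes the classification of nonzero entries.

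I expect the main obstacle to be bookkeeping the floor/ceiling endpoints correctly: the formula \eqn{EqTh1a} is split at $\pi = \lfloor n/(k+1)\rfloor$ versus $\lceil (n+1)/(k+1)\rceil$, and when $n$ decreases to $n-1$ these thresholds can either stay put or move by one depending on the residue of $n$ modulo $k$ and modulo $k+1$. The delicate point is confirming that precisely at $n=mk$ (and nowhere else in the range) the surviving boundary term is nonzero and equals $q(m,m)$; everywhere else the moving endpoint either contributes a term that is killed by the $2^{n-k\pi}$ versus $2^{(n-1)-k\pi}$ mismatch or falls outside the range $k \ge \lfloor\sqrt{n}\rfloor$ where \eqn{EqTh1a} is valid. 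Handling the transition cases $n=mk\pm 1$ carefully, and verifying that the $\pi=m$ term is legitimately inside the range where Theorem \ref{ThSQRT} applies (which requires $k \ge m+2$, exactly the stated hypothesis), is where I would concentrate the rigor.
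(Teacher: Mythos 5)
Your proposal is correct and is essentially the paper's own argument: the paper likewise derives \eqn{EqDconj2} from Theorem \ref{ThSQRT} by comparing rows $n=mk$ and $n=mk-1$ term by term in $\pi$, observing that each summand $c(mk,k,\pi)$ for $\pi\le m-1$ is exactly twice $c(mk-1,k,\pi)$ (your ``factor of $1/2$ under $n\mapsto n-1$'' cancellation), so that only the boundary term $\pi=m$ survives and equals $q(m,k-1)=q(m,m)$ by saturation. The paper merely presents this more tersely, proving $d(mk,k)=-q(m,m)$ ``as an illustration,'' whereas you also spell out the companion column $k-1$ and the off-diagonal vanishing, which is the same bookkeeping.
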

\begin{proof}
This follows easily from Theorem \ref{ThSQRT}. We prove
the second assertion in \eqn{EqDconj2} as an illustration.
We have
\beql{Eq100}
d(mk,k) ~=~ 2c(mk-1,k)~-~c(mk,k) \,.
\eeq
  From \eqn{EqTh1a},
\beql{Eq101}
c(mk,k)~=~ \sum_{\pi=1}^{m-1} \,c(mk,k,\pi) ~+~ c(mk,k,m) \,,
\eeq
\beql{Eq102}
c(mk-1,k)~=~ \sum_{\pi=1}^{m-1} \,c(mk-1,k,\pi)\,.
\eeq
Each summand in \eqn{Eq101} (see \eqn{EqTh1a}) is exactly twice
the corresponding term in \eqn{Eq102}, and $c(mk,k,m)$ $= q(m,k) = q(m,m)$, 
so $d(mk,k) = - q(m,m)$.
\end{proof}

Note that whereas the expression for $c(n,k)$ in Theorem \ref{ThSQRT}
involves the general function $q(\pi,k)$, the expression for $d(n,k)$
in the range $k \ge \lfloor \sqrt{n} \rfloor$ is fully explicit, since $q(m,m)$
is given by \eqn{Eqalpha}.

Theorem \ref{ThCK} gives another formula for $d(n,k)$:
\beql{EqDPQ}
d(n,k) ~=~ [k+1 \mid n] 
\left( p'\left(\frac{n}{k+1},k\right) + q\left( \frac{n}{k+1},k-1 \right) \right)
~-~ [k \mid n] \left( p'\left(\frac{n}{k},k-1\right) + q\left( \frac{n}{k},k-2 \right) \right)\,,
\eeq
and in particular,
\begin{align}
        d(n,1) & ~=~ [2 \mid n]~ p'(n/2,1) \,, \nonumber  \\
        d(n,2) & ~=~ [3 \mid n] ~ (p'(n/3,2) + p(n/3,1)) ~-~ [2 \mid n] ~p'(n/2,1)\,.
\end{align}
The first of these is nicely checked by noticing that the nonzero entries
in the first column of the $d(n,k)$ table, namely $2,2,4,6,10,20,\cdots$ are also
the entries in the first column of the $p'(n,k)$ table (Table \ref{Tabppnk}).
It is also worth mentioning that if $p$ is prime then $c(p,k)=2c(p-1,k)$ for
all $k$ (see \eqn{Eqcn1}) and so $d(p,k)=0$.

\subsection{ Computation of $c(n,k)$}\label{SecCg}

We constructed an extensive table of values of $c(n,k)$,
hoping that it would lead to additional insight into these numbers.
First, by direct enumeration, using a number of different
programs and different computers
(including a four-day computation on a cluster of 64 SPARC processors), 
we calculated $c(n,k)$ for $n \le 51$.

Second, we tabulated $e(n,i,j)$ for $n \le 23$.
This was sufficient for
the recurrences \eqn{Eqcn1} and \eqn{EqA4a}--\eqn{Eq14c}
to give $c(n,1)$ for $n \le 200$.
These values suggest the conjecture that
\beql{EqCn1a}
\lim_{n \rightarrow \infty} ~ \frac{ c(n,1)}{2^n} ~=~ 0.27004339525895354325\cdots \,.
\eeq
  From Equation \eqn{Eqcn1} we have
$$
c(n,1) ~\ge~ 2c(n-1,1)~-~ [2 \mid n] \,c(n/2,1)\,,$$
which implies, using the known values of $c(n,1)$, that
\beql{EqCn1b}
c(n,1) ~>~ 0.27 \cdot 2^n \quad \mbox{~for~} n \ge 200\,.
\eeq
We omit the proof. But we have no comparable upper bound
for $c(n,1)$ (other than $2^n$),
nor a proof that the limit \eqn{EqCn1a} exists.

Third, we used a different approach, which enabled us to take a table of
the curling numbers of all sequences of length $n \le n_0$,
and from this produce a table of $c(n,k)$ for all $n \le 2n_0$,
without having to compute the curling numbers
of all $2^{2n_0}$ sequences of length $2n_0$.
The idea underlying this approach is the following.
Consider a sequence $S$ of length $n$ with
$n_0 \le n \le 2n_0$, and let $M$ be its length-$n_0$ suffix.
As a first approximation, we set $\CN(S) = \CN(M) = l$ (say).
This approximation will be wrong if for some suffix $T$ of $M$
it should happen that $T^{l+1}$ is a suffix of $S$.
If so, we must increase $\CN(S)$ by 1 for all $S$ having
suffix $T^{l+1}$. There are complications
if there is more than one such $T$ to be considered,
but the Fine-Wilf theorem (Theorem  \ref{ThFW}) shows that this 
can only happen when $l=1$. We omit discussion of the details.
Using this approach (with $n_0 = 32$) we were able to extend the table of values
of $c(n,k)$ and $p(n,k)$ to $n=64$. 

Finally, we tabulated $p'(n,k)$ for $n \le 36$.
This, together with the 200 terms of $c(n,1)$, was sufficient
for the recurrence in Theorem \ref{ThCK} to give
the first 104 rows of the $c(n,k)$ table. These results can be seen in 
\seqnum{A216955} and \seqnum{A122536}.

\section{ Tail lengths of \{2,3\}-sequences }\label{Sec4}

\subsection{ Distribution of tail lengths}\label{Sec3t}

Let $t(n,i)$ denote the number of starting sequences $S_0$ of $n$
2's and 3's which have tail length $i$,
where $i$ ranges from 0 to $\TAIL(n)$.
The initial values are shown in Table \ref{TabTnk}.
Since the rows rapidly increase in length (cf. Table \ref{Tabmu}),
we end this table at $n=9$.
Note that the entries for $i=9$ through 55 (which are all zero) 
have been compressed into a single column.
Rows $n=22$ and 32 are shown in Tables \ref{TabTail22} and \ref{TabTail32}.
Entry \seqnum{A217209} in \cite{OEIS} gives the first 48 rows in full.
The first column is the same as the first column of the $c(n,k)$ table,
and contains the numbers $c(n,1)$ that are the subject of \S\ref{SecCd}.

\begin{table}[!h]
$$
\begin{array}{|r|cccccccccccccc|}
\hline
n\backslash i & 0 & 1 & 2 & 3 & 4 & 5 & 6 & 7 & 8 & 9\mbox{-}55 & 56 & 57 & 58 & 59 \\
\hline
1 & 2 &   &   &   &   &   &   &   &   &    &    & & &   \\
2 & 2 & 1 & 1 &   &   &   &   &   &   &    &    & & &   \\
3 & 4 & 2 & 2 &   &   &   &   &   &   &    &    & & &   \\
4 & 6 & 5 & 3 & 1 & 1 &   &   &   &   &    &    & & &   \\
5 & 12 & 9 & 6 & 2 & 3 &   &   &   &   &    &    & & &   \\
6 & 20 & 18 & 12 &  6 &  7 & 0 & 0 & 0 & 1 &    &    & & &   \\
7 & 40 & 34 & 25 & 11 & 14 & 1 & 0 & 1 & 2 &    &    & & &   \\
8 & 74 & 71 & 47 & 24 & 28 & 1 & 3 & 2 & 3 & 0  & 0 & 2 & 1 &   \\
9 & 148 & 139 & 95 & 48 & 56 & 6 & 4 & 3 & 6 & 0  & 2 & 3 & 1 & 1 \\
\hline
\end{array}
$$
\caption{Table of $t(n,i)$, the number of sequences of $n$ 2's and 3's
with tail length $i$, for $0 \le i \le \TAIL(n)$ (\seqnum{A217209}).}
\label{TabTnk}
\end{table}

\begin{table}[!h]
{\scriptsize
$$
\begin{array}{r|rrrrrrrrr}
0\mbox{-}8 &  1133200 &  1140102 &  768386 &  417081 &  479224 &  47190 &  33440 &  32283 &  51035 \\
9\mbox{-}17 &  6388 &  6096 &  1031 &  2074 &  516 &  807 &  67 &  0 &  0 \\
18\mbox{-}26 &  1 &  1 &  3 &  6 &  12 &  7 &  0 &  0 &  0 \\
27\mbox{-}35 &  0 &  0 &  0 &  0 &  0 &  0 &  0 &  0 &  0 \\
36\mbox{-}44 &  0 &  0 &  0 &  0 &  0 &  0 &  1 &  7 &  16 \\
45\mbox{-}53 &  24 &  50 &  98 &  198 &  394 &  786 &  1316 &  2633 &  5121 \\
54\mbox{-}62 &  5891 &  7687 &  9230 &  14622 &  12983 &  6486 &  2659 &  642 &  1099 \\
63\mbox{-}71 &  463 &  299 &  32 &  0 &  0 &  0 &  0 &  0 &  0 \\
72\mbox{-}80 &  1 &  0 &  0 &  0 &  0 &  0 &  0 &  0 &  0 \\
81\mbox{-}89 &  0 &  0 &  0 &  0 &  0 &  0 &  0 &  0 &  0 \\
90\mbox{-}98 &  0 &  0 &  0 &  0 &  0 &  0 &  0 &  0 &  0 \\
99\mbox{-}107 &  0 &  0 &  1 &  1 &  2 &  4 &  8 &  16 &  32 \\
108\mbox{-}116 &  64 &  128 &  256 &  512 &  1024 &  17 &  17 &  34 &  70 \\
117\mbox{-}120 &  139 &  282 &  8 &  1 \\
\end{array}
$$
}
\caption{ Distribution of tail lengths $t(22,i)$, $0 \le i \le 120$, for all starting sequences of length 22 (22 is the first time a tail of length 120 is reached). Note the three ``clumps.'' }
\label{TabTail22}
\end{table}

\begin{table}[!h]
{\scriptsize
$$
\begin{array}{r|rrrrrrrrr}
0\mbox{-}8 &  1159845258 &  1167273283 &  786757853 &  427198253 &  490970976 &  48399112 &  34266983 &  33065461 &  52260747 \\
9\mbox{-}17 &  6585936 &  6286710 &  1088875 &  2157877 &  553922 &  848516 &  69469 &  519 &  1038 \\
18\mbox{-}26 &  836 &  1547 &  3092 &  6184 &  11843 &  7303 &  206 &  28 &  57 \\
27\mbox{-}35 &  99 &  194 &  0 &  0 &  0 &  2 &  9 &  21 &  34 \\
36\mbox{-}44 &  72 &  130 &  198 &  394 &  788 &  1576 &  3153 &  6305 &  12610 \\
45\mbox{-}53 &  25219 &  50438 &  100876 &  201752 &  403504 &  804960 &  1347868 &  2695736 &  5244019 \\
54\mbox{-}62 &  6034490 &  7874728 &  9455010 &  14977616 &  13308516 &  6658834 &  2742615 &  676305 &  1153446 \\ 
63\mbox{-}71 &  487704 &  309650 &  32814 &  28 &  24 &  48 &  96 &  193 &  385 \\
72\mbox{-}80 &  770 &  0 &  0 &  0 &  0 &  0 &  0 &  0 &  0 \\
81\mbox{-}89 &  0 &  0 &  0 &  0 &  1 &  2 &  5 &  10 &  20 \\
90\mbox{-}98 &  0 &  1 &  1 &  2 &  4 &  8 &  16 &  32 &  64 \\
99\mbox{-}107 &  128 &  256 &  512 &  1024 &  2048 &  4096 &  8192 &  16384 &  32768 \\
108\mbox{-}116 &  65536 &  131072 &  262144 &  524288 &  1048544 &  18331 &  18265 &  36530 &  73119 \\
117\mbox{-}120 &  146237 &  292601 &  8798 &  1144 &             &         &       &        &        \\
\end{array}
$$
}
\caption{ Distribution of tail lengths $t(32,i)$, $0 \le i \le 120$, for
all starting sequences of length 32. The clumps have thickened. }
\label{TabTail32}
\end{table}

As can be seen from Tables \ref{TabTnk}--\ref{TabTail32},
the values in each row are distributed into clumps, with each clump
gradually thickening as $n$ increases.
Table \ref{TabTail22} shows the distribution of tail lengths 
at length 22, the first time that a tail of length 120 is reached
(note the final ``1'', indicating that the starting sequence was unique).
By length 32 (Table \ref{TabTail32}), the clumps have thickened
but still end at 120. A tail of length greater than 120 does not appear until length 48,
when the greatest tail length jumps to 131.
The powers of 2 in Tables \ref{TabTail22} and \ref{TabTail32}
suggest that the clumps tend to grow by
prefixing good starting sequences of shorter length by
random strings of 2's and 3's. 
However, we do not have a satisfactory model which explains this distribution.

The mean value of the $n$th row,
$$
\frac{1}{2^n} ~ \sum_{i=0}^{\TAIL(n)} i \, t(n,i) \,,
$$
at least for $n \le 48$, is converging 
to a value around $2.741\cdots$ (see \seqnum{A216813}). 
That is, if a starting sequence consisting of $n$ 2's and 3's
is chosen at random, it will reach a 1 on average after
only $2.741\cdots$ steps.
This is in sharp contrast to the behavior of the best 
starting sequences, as we see from Table \ref{Tabmu}.
Of course if the curling number conjecture is false for
sequences of 2's and 3's, the mean will be
infinite beyond some point.

\subsection{ A probabilistic model }\label{Sec3a}
Let $\PROB _k^{(n)} := c(n,k)/2^n$ denote the probability that a randomly chosen
sequence consisting of $n$ 2's and 3's has curling number $k$.
The available data ($n \le 200$ for $k=1$, $n \le 104$ for $k>1$)
suggests that as $n$ increases these probabilities are converging to the values
$$
\PROB _1 ~\approx~ .270,
~ \PROB _2 ~\approx~ .434,
~ \PROB _3 ~\approx~ .162,
~ \sum_{k \ge 4} \PROB _k ~\approx~ .134 \,.
$$
When we extend a sequence $S$ by appending the curling number $k= \CN(S)$,
{\em if} it were the case that the concatenation $S k$ were
independent of $S$, we could model this
process as a two-state Markov chain with states
``curling number is 2 or 3'' and ``curling number is 1 or $\ge 4$.''
The probability of staying in the ``2 or 3'' state would be
$\PROB _2 + \PROB _3 \approx .596\cdots$ and the probability of
leaving that state would be $.404\cdots$.
If the starting sequence is randomly chosen
from all $2^n$ possibilities, this model would imply that
the maximal number of steps before reaching the ``1 or $\ge 4$''
state for the first time would be about
$$
t ~\approx~ n \, \frac{\log 2}{\log (1/.596)} ~\approx~ 1.34 \, n \,. 
$$
This Markov model certainly does not apply at the
beginning of the appending process, but it could conceivably 
be valid once the sequence has been extended for a while,
so we think it is worth mentioning.

\subsection{ ``Rotten'' sequences: prefix decreases tail }\label{Sec3r}
Let $S_0$ be an arbitrary sequence of 2's and
3's of length $n$,
with tail length $\tau(S_0) = i$, say.
It seems plausible that if $n$ is large, then prefixing
$S_0$ by a single 2 or 3 will not change $\tau (S_0)$,
i.e., that $\tau(2 \, S_0) = \tau(3 \, S_0) = \tau(S_0)$. 
But could doing this actually {\em decrease} the tail length?
Choosing an adjective not normally used in mathematics,
we will call $S_0$ {\em rotten} if either
 $\tau(2 \, S_0) < \tau(S_0)$ or
 $\tau(3 \, S_0) < \tau(S_0)$, and {\em doubly rotten}
if both $\tau(2 \, S_0) < \tau(S_0)$ and
 $\tau(3 \, S_0) <  \tau(S_0)$ hold.
There are surprisingly few rotten sequences of length up
through 34. The first few examples are shown in Table \ref{TabRott},
and the numbers of rotten sequences of lengths 1 through 34 are
given in Table \ref{TabRott2}.
If $S_0 = 3\,2\,3\,2\,3$, for example,
then $S_0^{(e)} = 3\,2\,3\,2\,3\,2\,3\,3\,2$,
and $\tau(S_0)=4$.
But if we prefix $S_0$ with a 2, so the starting sequence is $2\,S_0 = 2\,3\,2\,3\,2\,3$,
the extension is $2\,3\,2\,3\,2\,3\,3\,2$, so $\tau(2\,S_0) = 2$, and $S_0$ is rotten.

\begin{table}[!h]
{\scriptsize
$$
\begin{array}{rrrrrrrrrr}
22 & 333 & 32323 & 323232 & 2323232 & 3232323 & 22322232 \\
23222322 & 23223223 & 33233233 & 223222322 & 223222323 & 232223222 & 332332332 \\
2232223222 & 2232223223 & 2232223232 & 2322232223 & 2322322322 & 2332332332 & 3322332233 \\
3323323323 & 22322232223 & 22322232232 & 22322232322 & 22322322232 & 22322322322 & 22323222322 
\end{array}
$$
}
\caption{The first 28 rotten sequences (\seqnum{A216730}).}
\label{TabRott}
\end{table}

\begin{table}[!h]
$$
\begin{array}{rrrrrrrrrr}
0 & 1 & 1 & 0 & 1 & 1 & 2 & 4 & 4 & 8 \\
14 & 11 & 18 & 30 & 26 & 24 & 40 & 35 & 58 & 69 \\
48 & 84 & 158 & 67 & 139 & 287 & 215 & 242 & 490 & 323 \\
624 & 919 & 516 & 1072
\end{array}
$$
\caption{Number of rotten sequences of lengths 1 through 34 (\seqnum{A216950}).}
\label{TabRott2}
\end{table}

However, up to length 34 there are no doubly rotten sequences. 
\begin{conj}\label{ConjDR}
Doubly rotten sequences do not exist.
\end{conj}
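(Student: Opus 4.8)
The plan is to attack the conjecture through the notion of prefix \emph{activation}. Given $S_0$ and a prefix $a \in \{2,3\}$, say that $a$ \emph{activates at step} $m$ if $m$ is the least index for which $\CN(a\,S_m) = \CN(S_m)+1$, where $S_0, S_1, \ldots$ is the evolution of $S_0$. By Theorem~\ref{Lem78.4}, $\CN(a\,S_m) \in \{\CN(S_m), \CN(S_m)+1\}$, and an easy argument shows that equality $\CN(a\,S_m) = \CN(S_m)+1$ holds precisely when $a\,S_m$ is itself a perfect power $Y^{k+1}$ with $k = \CN(S_m)$: any suffix power of exponent $k+1$ that were shorter than $a\,S_m$ would already be a suffix of $S_m$, forcing $\CN(S_m) \ge k+1$. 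As long as $a$ has not yet activated, the evolution of $a\,S_0$ agrees termwise (after deleting the leading $a$) with that of $S_0$. Consequently, if $a$ does not activate at any step before $\tau(S_0)$, then the two evolutions agree up to that point, each intermediate curling number of $a\,S_0$ is reproduced and is $\ge 2$, and so $\tau(a\,S_0) \ge \tau(S_0)$; thus $a$ cannot decrease the tail length. Hence a doubly rotten $S_0$ must have \emph{both} prefixes activating strictly before step $\tau(S_0)$.

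Next I would show that the two first-activation steps are distinct. Suppose both $2W$ and $3W$ were perfect $(k+1)$-st powers for a common $W = S_m$ with $k = \CN(W)$, say $2W = Y^{k+1}$ and $3W = Z^{k+1}$. Then $|Y| = |Z| = (|W|+1)/(k+1) \le |W|$, so $Y$ and $Z$ are each equal to the length-$|Y|$ suffix of $W$, whence $Y = Z$. But the first term of $Y$ is the first term of $2W$, namely $2$, while the first term of $Z$ is $3$, a contradiction. (This is a degenerate instance of the Fine--Wilf dichotomy of Theorem~\ref{ThFW}.) Therefore at most one prefix can activate at any single step, the first-activation steps satisfy $m_2 \ne m_3$, and without loss of generality $m_2 < m_3$.

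The crux is then to analyse the trajectory after the earlier activation. At step $m_2$ we have $2\,S_{m_2} = Y^{k+1}$ with $Y$ primitive (Lemma~\ref{LemPi}) and $k = \CN(S_{m_2}) \in \{2,3\}$ (curling numbers along a tail are $2$ or $3$ until the terminal step), after which the evolution of $2\,S_0$ diverges from that of $S_0$; meanwhile $3\,S_0$ still tracks $S_0$ at step $m_2$, since $m_3 > m_2$. I would try to argue that the rigid structure $Y^{k+1}$ imposed at $m_2$ propagates backward to pin down $S_{m_2}$, and hence the whole tail of $S_0$, tightly enough to preclude prefix $3$ from both activating \emph{and} shortening. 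When $k = 3$ the next appended symbol is $4$, which forces $\CN = 1$ immediately and $\tau(2S_0) = m_2+1$; when $k=2$ one instead gets $2\,S_{m_2} = Y^3$ and must follow the diverged sequence $Y^3\,3\cdots$, controlling the curling numbers that arise by repeated use of Theorem~\ref{ThFW}. The natural framing is a minimal counterexample: assuming $S_0$ is a shortest doubly rotten sequence, use the activation structure at $m_2$ to produce either a shorter doubly rotten sequence or a direct contradiction with $\tau(3S_0) < \tau(S_0)$.

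I expect this last step to be the genuine obstacle, and it is surely why the statement remains only a conjecture. The difficulty is that once a prefix activates, the two evolutions diverge in \emph{two} places simultaneously---the deleted leading symbol and the incremented appended symbol---and there is no evident invariant carrying information from the diverged $2\,S_0$-trajectory to the still-synchronized $3\,S_0$-trajectory. A successful proof would likely require a potential function measuring how much suffix structure each prefix ``consumes'' when it activates, or an appeal to the unavoidable-regularity results discussed in \S\ref{Sec2S}, so as to show that the structural cost of activation-and-shortening can be borne by at most one of the two prefixes. Absent such a tool, the statement remains verified only computationally (for all $n \le 34$).
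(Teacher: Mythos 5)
Your proposal is not a proof, and you say so yourself: the entire content of the conjecture lives in the third paragraph's ``crux'' step --- showing that the power structure $Y^{k+1}$ forced at the earlier activation step prevents the other prefix from both activating and shortening the tail --- and that step is only described as a hope, never carried out. This is a genuine gap, not a technicality. It is worth being explicit about the ground truth here: the paper does not prove this statement either. It appears as Conjecture \ref{ConjDR}, supported solely by exhaustive computation through length $34$, and it is listed again among the open problems at the end of the paper. So there is no paper proof against which to measure your attempt, and your closing assessment --- that the statement remains verified only computationally --- is exactly the state of the art.

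That said, your preliminary reductions are sound and go beyond anything the paper records, so they are worth preserving. The characterization of activation (prefixing increments the curling number precisely when $a\,S_m$ is a perfect $(k+1)$-st power, since a shorter suffix power of exponent $k+1$ would already sit inside $S_m$ and contradict $\CN(S_m)=k$), the synchronization argument showing that a prefix which never activates before step $\tau(S_0)$ cannot decrease the tail, and the first-symbol argument that the prefixes $2$ and $3$ cannot both activate at the same step are all correct. One small imprecision: your parenthetical ``curling numbers along a tail are $2$ or $3$ until the terminal step'' can fail at the penultimate step, where $\CN(S_{\tau-1})\ge 4$ is possible; but that case is harmless, since then $\tau(a\,S_0)=m+1=\tau(S_0)$ and no shortening occurs, just as in your $k=3$ analysis. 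So what you have established are necessary conditions on a doubly rotten sequence (both prefixes activate strictly before $\tau(S_0)$, at distinct steps), together with an accurate diagnosis of why the argument stalls: after the first activation the two trajectories differ in two places at once, and no invariant carries information from the diverged trajectory to the still-synchronized one. Necessary conditions and a diagnosis, however, do not amount to nonexistence, so the conjecture stands open.
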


If this conjecture were true, it would imply
that one can always prefix a starting sequence $S_0$
by one of $\{2,\,3\}$ without decreasing the tail length. 
This would explain the observation made in \S\ref{Sec2P}
about the behavior of $\TAIL(n)$ between jump points.
It would also imply that $\TAIL(n+1) \ge \TAIL(n)$ for 
all $n$, something that we do not know at present.

\subsection{ Sequences in which first term is essential}\label{Sec3b}
A statistic that is relevant to the study
of rotten sequences is the following.
If a starting sequence $S_0$ of length $n$ is chosen at random, 
and has curling number $k$, this means we can write
$S_0 = X Y^k$ for suitable sequences $X, Y$.
What is the probability that we must necessarily
take $X$ to be the empty sequence, i.e., that
the only such representation goes all the way back
to the beginning of $S_0$
(and so the first term is essential for the computation
of the curling number)? The sequence
$2\,2\,3\,2\,2\,3$ is an example,
since here $k=2$, and $X=\EMPTY $, $Y=2\,2\,3$ is the only
representation. But 
$2\,3\,3\,2\,3\,3$ is not, since $k=2$ and we can either
take $X=\EMPTY $, $Y=2\,3\,3$ or
$X=2\,3\,3\,2$, $Y=3$, and the latter representation avoids using $X=\EMPTY $. 
The number of such sequences of length $n$ for $1 \le n \le 35$ is
given in Table \ref{TabRott3}. 
If $n$ is prime, the number is 2, but the
limit supremum of these numbers
appears to grow exponentially.

\begin{table}[!h]
$$
\begin{array}{rrrrrrrrrr}
2 & 2 & 2 & 4 & 2 & 8 & 2 & 10 & 8 & 14 \\
2 & 40 & 2 & 40 & 32 & 88 & 2 & 192 & 2 & 324 \\
100 & 564 & 2 & 1356 & 32 & 2226 & 370 & 4564 & 2 & 9656 \\
2 & 17944 & 1450 & 35424 & 152
\end{array}
$$
\caption{Number of sequences of lengths 1 through 35
whose curling number representation $X\,Y^k$ requires $X=\EMPTY $ (\seqnum{A216951}).}
\label{TabRott3}
\end{table}

\subsection{ Sequences where prefix increases tail}\label{Sec3p}
In contrast to ``rotten'' sequences, we also investigated
starting sequences $S_0$ for which either 
$\tau(2\, S_0) > \tau(S_0)$ or $\tau(3\, S_0) > \tau(S_0)$.
The sequence $S_0 = 2\,2\,3\,2\,2$ is an example, since
$\tau(S_0)=2$, $\tau(2\,S_0)=8$, $\tau(3\,S_0)=2$.
The numbers of such sequences of lengths 1 through 30 are shown in
Table \ref{TabRipe}. There are rather more
of these than there are rotten sequences, although we found
no example where both 
$\tau(2\, S_0) > \tau(S_0)$ and $\tau(3\, S_0) > \tau(S_0)$
hold.

\begin{table}[!h]
$$
\begin{array}{rrrrrrrrrr}
2 &  1 &  2 &  1 &  5 &  3 &  12 &  9 &  19 &  16 \\
38 &  20 &  59 &  42 &  104 &  65 &  213 &  111 &  400 &  245 \\
765 &  439 &  1563 &  820 &  3046 &  1731 &  5955 &  3292 &  12078 &  6343 \\
\end{array}
$$
\caption{Number of sequences $S_0$ of lengths 1 through 30 such that
$\tau(2\, S_0) > \tau(S_0)$ or $\tau(3\, S_0) > \tau(S_0)$ (\seqnum{A217437}).}
\label{TabRipe}
\end{table}

\section{Gijswijt's sequence}\label{Sec3}

If we simply start with $S_0 = 1$, and generate an infinite sequence
by continually appending the curling number of the current
sequence, as in \eqn{Eq1}, we obtain
$$
G :=  1\,1\,2\,1\,1\,2\,2\,2\,3\,1\,1\,2\,1\,1\,2\,2\,2\,3\,2\,1\,1\,2\,1\,1\,2\,2\,2\,3\,1\,1\,2\,1\,1\cdots \,.
$$
This is {\em Gijswijt's sequence}, \seqnum{A090822},
invented by D. Gijswijt in 2004, and analyzed by
van de Bult et al.  \cite{GIJ}.

The first time a $4$ appears in $G$ is at term $220$. One can calculate
quite a few million terms without finding a $5$ (as
the authors of \cite{GIJ} discovered), 
but in \cite{GIJ} it was shown that a $5$ 
eventually appears for the first time at about term
$$
10^{10^{\Snj 23}} \, .
$$
Van de Bult et al. \cite{GIJ} also show that $G$ is in fact unbounded, 
and conjecture that the first time that
a number $m~\ge 6$ appears is at about term number
$$
2^{2^{\Snj 3^{\Snj 4^{\cdot^{\cdot^{\cdot^{\Snj {m-1}}}}}}}} \,,
$$
a tower of height $m-1$.
The fairly complicated arguments used in  \cite{GIJ} could be considerably simplified
and extended if the curling number conjecture were known to be true. 

Our final theorem shows that if the curling number conjecture is true,
any starting sequence $S$ that
does not contain a 1 must eventually merge with $G$.

\begin{thm}\label{ThJoin}
Assume the curling number conjecture is true.
Let $S$ be an initial sequence not containing a $1$,
let $S^{(e)}$ be its ``extension'' $($defined in \S\ref{Sec1}$)$,
and let $S^{(\infty)}$ be its infinite continuation.
Then $S^{(\infty)} = S^{(e)}G$.
\end{thm}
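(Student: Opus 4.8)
The plan is to use the curling number conjecture only to guarantee that $S$ reaches a $1$, and then to show that the block lying before that first $1$ is irrelevant to every subsequent curling number. First I would record that, since $S$ contains no $1$ and $t:=\tau(S)$ is the \emph{smallest} index with $\CN(S_t)=1$, each term appended on the way to $S_t$, namely $\CN(S_0),\dots,\CN(S_{t-1})$, is $\ge 2$; hence $S^{(e)}=S_t$ still contains no $1$. Writing $L:=|S^{(e)}|$, the term $s_{L+1}=\CN(S^{(e)})=1$ is the first $1$ to occur, so it suffices to prove that the tail $s_{L+1}s_{L+2}\cdots$ equals $G$. I would do this by induction on $j$, proving $s_{L+1}\cdots s_{L+j}=G[1..j]$ (the length-$j$ prefix of $G$): the base case $j=1$ is the single term $1=G[1]$, and since $G[j+1]=\CN(G[1..j])$ while $s_{L+j+1}=\CN(s_1\cdots s_{L+j})$, the inductive step reduces to the \emph{Key Lemma}: if $P$ contains no $1$ and $W=G[1..j]$, then $\CN(PW)=\CN(W)$.

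Only the bound $\CN(PW)\le\CN(W)$ needs proof, since every power suffix of $W$ is one of $PW$. Suppose instead $\CN(PW)=j'>\CN(W)$, realised by a primitive $Y$ (Lemma \ref{LemPi}) with $PW=UY^{j'}$ and $j'\ge 2$. As $j'$ exceeds $\CN(W)$, the block $Y^{j'}$ cannot lie inside $W$, so it protrudes into $P$ by $\delta:=j'|Y|-|W|>0$ positions. This is where Fine--Wilf (Theorem \ref{ThFW}) and the absence of $1$'s in $P$ combine: the period of $Y^{j'}$ is $|Y|$, and position $L+1$ (value $1$) lies inside $Y^{j'}$, so by periodicity a $1$ recurs every $|Y|$ positions; if $\delta\ge|Y|$ a whole copy of $Y$ would lie to the left of position $L+1$, inside $P$, placing a $1$ there, which is impossible. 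Hence $0<\delta<|Y|$: exactly one copy of $Y$ straddles position $L+1$, while the remaining $j'-1$ copies lie inside $W$, and together with $\CN(W)<j'$ this forces $\CN(W)=j'-1$. Tracking the straddling copy then shows $W=G[1..j]$ is exactly the length-$|W|$ suffix of $Y^{j'}$; consequently $G[1..j]$ has global period $q:=|Y|$ with $q\nmid j$, its curling number equals $\lfloor j/q\rfloor$ and is realised by the length-$q$ period, and the final $\delta$ symbols of the period block $G[1..q]$ (the part of $Y$ sitting in $P$) are all $\ge 2$.

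The hard part, and the only place where the specific nature of $G$ enters rather than just ``starts with a $1$,'' is to rule out this straddling configuration; I expect it to be the main obstacle. The two structural ingredients I would assemble are: (i) because $G$ opens with $1\,1$, a global period $q$ of $G[1..j]$ reproduces these two $1$'s at positions $q+1,q+2$ and, pushed further, severely restricts the admissible periods $q$, which track the self-similar scales of $G$; and (ii) whenever the period block $G[1..q]$ ends in a run of symbols all $\ge 2$, the suffix of $G[1..j]$ (a prefix of that block, by periodicity) carries a \emph{short} repeated suffix whose exponent already exceeds $\lfloor j/q\rfloor$, so that $\CN(G[1..j])>\lfloor j/q\rfloor$, contradicting the equality forced above. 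Making (i) and (ii) precise and uniform in $q$ and $j$ is the real work, and it is likely to lean on the detailed analysis of Gijswijt's sequence in \cite{GIJ}. Granting the Key Lemma, the induction closes: $s_{L+1}s_{L+2}\cdots=G$, and therefore $S^{(\infty)}=S^{(e)}G$, as claimed.
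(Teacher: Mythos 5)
Your setup is correct as far as it goes: the induction on $j$, the reduction to the Key Lemma, the remark that only $\CN(PW)\le\CN(W)$ needs proof, and the derivation of the straddling configuration (the protrusion satisfies $0<\delta<|Y|$, exactly one copy of $Y$ straddles the boundary, $\CN(W)=j'-1$, and the protruding part of $Y$ is $1$-free) are all sound; indeed this configuration is exactly the decomposition the paper works with, written there as $S^{(e)}=WX$, $S^{(\infty)}=W(XT)^n\,n\cdots$, $G=T(XT)^{n-1}\,m\cdots$. But your proposal stops precisely where the proof has to be done. Ruling out the straddling configuration is deferred to two unproven ``ingredients,'' and ingredient (ii) --- that a period block of $G[1..q]$ ending in symbols $\ge 2$ forces $\CN(G[1..j])>\lfloor j/q\rfloor$ --- is asserted with no argument and no evident route to one. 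Since you yourself flag this as ``the real work,'' the central step of the proof is missing: this is a genuine gap, not a stylistic difference.

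Moreover, the missing step needs none of the fine structure of $G$ from \cite{GIJ} that you anticipate invoking; it follows from the two facts you already used, namely that $G$ begins with a $1$ and that every term of $G$ is, by definition, the curling number of the preceding prefix of $G$. In your notation, write $Y=AB$, where $A$ is the part of $Y$ inside $P$ (nonempty and $1$-free, of length $\delta$) and $B=G[1..q-\delta]$ is a nonempty prefix of $G$; then $G[1..j]=B(AB)^{j'-1}=(BA)^{j'-1}B$, and the term of $G$ following position $j$ is $\CN(G[1..j])=j'-1$. If $j'\ge 3$, the prefix $(BA)^2$ of $G$ forces the term of $G$ immediately after it to be $\ge 2$; but since $j'-1\ge 2$ that term is the first symbol of the next copy of $B$, i.e.\ $G[1]=1$, a contradiction. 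If $j'=2$, then $G[1..j]=BAB$ with $\CN(BAB)=j'-1=1$; but the term of $G$ following the first copy of $B$ equals both $\CN(B)$ (by the defining rule of $G$) and the first symbol of $A$, which is $\ge 2$; hence $B$ ends in a repeated block $V^c$ with $c\ge 2$, and so does $BAB$, giving $\CN(BAB)\ge 2$, again a contradiction. This short two-case argument is essentially the paper's entire proof; with it in place of your ingredients (i) and (ii), your induction closes.
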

\begin{proof} 
By definition, $S^{(e)}$ does not contain a 1 but is immediately followed by a 1.
Suppose  $S^{(\infty)} \ne S^{(e)} G$, and suppose they first differ at a position
where $S^{(\infty)}$ is $n$, say, whereas $S^{(e)} G$ is $m<n$.
This $n$ must be the curling number of some portion of  $S^{(\infty)}$ that 
begins with a suffix $X$, say, of $S^{(e)}$. Let $S^{(e)} = W X$.
Then $S^{(\infty)} = W (X T)^n \,n\cdots$ for some prefix $T$ of $G$,
whereas $G = T (X T)^{n-1}\,m\,\cdots$\,.
If $n=2$, $m=1$, we have $G=TXT1\cdots$\,.
The curling number of the first copy of $T$ is the first term of $X$, which is not 1,
but the curling number of the second $T$ is 1, a contradiction.
On the other hand, if $n\ge 3$, $G = TXTXT\cdots XTm\cdots$\,,
and the initial $TXTX$ has curling number at least 2 and cannot be followed by 
$T$ (which begins with 1), again a contradiction.
\end{proof}

We do not know if the theorem is still true if $S$ is allowed to contain
a 1 but does not end with 1.

\section{ Open questions and topics for future research}
\begin{enumerate}
\item
Is the curling number conjecture (even just for the case of sequences of
2's and 3's) true?
\item
It would be nice to have some further exact values of $\TAIL(n)$,
beyond $n = \LIMA$, even though they will require extensive computations.
\item
What is the asymptotic behavior of $\TAIL(n)$?
\item
Can the especially good starting sequences shown in Tables
\ref{TabGood1} and \ref{TabGood2}
(in particular those of lengths 22, 48 and 77) be generalized?  
What makes them so special?
\item
Can the properties of good starting sequences mentioned in 
Conjecture \ref{Conj33} be justified?
\item
Can Shirshov's theorem (see \S\ref{Sec2S}) be
modified so as to apply to our problem?
\item
Are there analogs of Theorems \ref{ThCK} and \ref{ThSQRT} for
$p(n,k)$ (the number of primitive sequences) or 
$p'(n,k)$ (the number of primitive and robust sequences) ?
\item
Are there formulas for
$c(n,k)$ that are more explicit than those given in Theorems \ref{ThCK} 
and \ref{ThSQRT}?
Is there a formula that matches the 200 known terms of the
$c(n,1)$ sequence?
\item
Are there formulas or recurrences 
for the numbers $t(n,i)$ of starting sequences
with tail length $i$?
\item
Is there a probabilistic model that better explains the distribution
of values of $t(n,i)$ visible in Tables \ref{TabTnk}--\ref{TabTail32}
and \seqnum{A217209}?
The model presented in \S\ref{Sec3a} is certainly inadequate.
\item
Do ``doubly rotten'' sequence exist? (See Conjecture \ref{ConjDR}.)
\item
The question implicit in the last sentence of \S\ref{Sec3}.
\end{enumerate}

\section{Appendix: Proofs of Theorems \ref{ThA5}, \ref{ThA6}, \ref{ThA7}}

\subsection{ Theorem \ref{ThA5} }
\begin{proof}
The first statement follows immediately
from Theorem \ref{ThA1}, taking $S$ and $T$ in that theorem to
be $YX$ and $X$ respectively.
To prove the second statement, let
$x  := |X|$, $y  := |Y|$, $s  := |S|$, $t  := |T|$,
and note that $YX = STS$ implies $x + y = 2 s + t$.
Also $X=BY$ (say), with $B \ne \EMPTY $.

We are to show that $s = y$ or $s > 2 y$.  
First, suppose that $y < s \le 2 y$.
Since $s > y$, there exists 
a sequence $U$ with $|U| = s-y$
such that $S = YU$.  Then
we have the following chains of implications
[the successive assertions are enclosed in
square brackets]:
$
[ s > y  ]
 \Rightarrow [s > y - t ]
 \Rightarrow [x = 2 s + t - y > s ]
 \Rightarrow [X \succ S \succ U ],
$
and
$
 [ s \le 2 y ]
\Rightarrow [ s - y \le y ]
\Rightarrow [ |Y| \le |U| ]
\Rightarrow [ Y \succeq U ~(\mbox{since~} YX=YBY=STYU)]
\Rightarrow [ Y=CU ~(\mbox{say}) ]
\Rightarrow [ X \succ S=CUU]
\Rightarrow [ \CN(X) > 1 ],
$
a contradiction.

Second, suppose that $s < y$.  Then there exists $U \ne \EMPTY $ 
with $|U| = y-s$ such that $Y = SU$.  
But $x>y>s$ and $YX=STS$ imply $X \succ S$, 
and since $X \succ Y$ then $X \succ U$ also.  
If $S \succeq U$ then $YX = YBSUX \succ UU$, which contradicts $\CN(YX) = 1$. 
Hence
$
[ U \succ S ]
        \Rightarrow [ s < y - s ]
        \Rightarrow [ 2 s < y ]
        \Rightarrow [ x + y = 2 s + t < y + t \le y + x ]\,,
$
since $X \succeq T$. Since this is impossible, 
$s < y$ is also impossible.
\end{proof}

Note that the condition $|S| = |Y|$ is equivalent to $2 |Y| > |X|$:
if $s=y$ then $x = y + t$, which implies $2y = s+y >t+y$ (since $t>s$).
Conversely, if $s \ne y$ then $s>2y$, which implies 
$x+y=2s+t>4y+t$, $x>3y+t$, so $x \ge 2y$.
Similar reasoning shows that 
the condition $|S| > 2 |Y|$ is equivalent to $3 |Y| < |X|$.

\subsection{ Theorem \ref{ThA6} }
\begin{proof}
If $X \in \sA(n,i) \setminus \sB(n,i)$ then we may apply Theorem \ref{ThA5}
to $X$, taking $Y = X^{[i]}$, with $|X|=n$, $|Y|=i$, where $n/2 <i <n$.
So there exist $S$, $T$ with  $YX = STS$,  $Y \succeq T$,
$S \succ T$, and either $|S|=|Y|$ or $|S|>2|Y|$. 
We cannot have $|S|>2|Y|$, since that implies $|S|>n$, $2|S|>2n>|YX|$,
which contradicts $YX=STS$.
So $|S|=|Y|$, $Y=S$, $X=TY$, and $|T|=n-i$.
Also $\CN(YX)=1$ by definition of $\sA(n,i)$,
i.e., $\CN(YTY)=1$, so $Y \in \sB(i,n-i)$.

The map from $X$ to $Y$ is one-to-one, since $X$ determines $Y$.
To show it is onto, take $Y \in \sB(i,n-i)$, let $Q=Y^{[n-i]}$, and define
$P$ by $Y=PQ$ and set $X := QY = QPQ$.
Then we have $\CN(YQY) = \CN(YX)=1$, so $X \in \sA(n,i)$.
Also $XYX=QPQ\,PQ\,QPQ$ has curling number at least 2, so $X \notin \sB(n,i)$.  
Hence $X \in \sA(n,i) \setminus \sB(n,i)$.
\end{proof}

\subsection{ Theorem \ref{ThA7}}
\begin{proof}
Since the sets $\sE$ in the sum are clearly
disjoint, we just need to establish a bijection between the elements of
$\sA(n,i) \setminus \sB(n,i)$ and the disjoint union of the $\sE$ sets defined by the range
of $m$.

As in the previous proof, if
$X \in \sA(n,i) \setminus \sB(n,i)$, then we may apply Theorem \ref{ThA5}
to $X$, taking $Y = X^{[i]}$, with $|X|=n$, $|Y|=i$, where now $1 \le i < n/3$.
There exist $S$, $T$ with  $YX = STS$,  $Y \succeq T$,
$S \succ T$, and either $|S|=|Y|$ or $|S|>2|Y|$. 
Let $|S| = m$, $|T| = n+i-2 m$. As before, $S \in \sB(m,|T|)$.  There are three
conditions that $m$ must satisfy:
(i) $|T| \ge 1$  implies $m \le (n+i-1)/2$;
(ii) $|S| > |T|$ implies $m > \lceil (n+i)/3 \rceil$;
(iii)  $m=i<n/3$ is incompatible with $YX=STS$, so $m > 2i$.

Since $m > i$, we
may write $S = YU$, with $|U| = m-i$.  
Since $m > 2 i$, $m-i > i$ and $|U|>|Y|$.
Now $X \succ S$, so $X \succ U$ and therefore $U \succ Y$.  
Since $S = YU \in \sB(m,|T|)$,
$U \in \sE(m-i,i,|T|)$.  
The mapping $X \mapsto U$ is one-to-one
since $X$ determines $Y = X^{[i]}$, 
$S$ and $T$ are unique by Theorem \ref{ThA2},
$m=|S|$, and $S=YU$ determines $U$.

To show the map is onto, suppose $U \in \sE(m-i,i,n+i-2 m)$ for some $m$ satisfying
conditions (i)-(iii) above.
Then set $Y = U^{[i]}$, $S = YU$, $T = S^{[n+i-2 m]}$,
and $X = UTS$.  
Then $YX = STS$ so that $U \in \sE(m-i,i,n+i-2 m)$ implies 
$YX \in \sA(n,i)$.  
But $XYX = XSTS \succ TSTS$,
so $\CN(XYX) > 1$  and therefore $XYX \notin \sB(n,i)$.
\end{proof}

\section{Acknowledgments}
We thank the referees for several helpful comments.

\bigskip
\hrule
\bigskip

\noindent 2010 {\it Mathematics Subject Classification}:
Primary 68R15; Secondary 11B37.

\noindent \emph{Keywords: } 
curling number, Gijswijt sequence, sequences, conjectures, Fine-Wilf theorem.

\bigskip
\hrule
\bigskip

\noindent (Concerned with sequences
\seqnum{A027375},
\seqnum{A090822},
\seqnum{A122536},
\seqnum{A135491},
\seqnum{A160766},
\seqnum{A216730},
\seqnum{A216813},
\seqnum{A216950},
\seqnum{A216951},
\seqnum{A216955},
\seqnum{A216955},
\seqnum{A217208},
\seqnum{A217209},
\seqnum{A217437},
\seqnum{A217943},
\seqnum{A218869},
\seqnum{A218870},
\seqnum{A218875}, and
\seqnum{A218876}.)

\bigskip
\hrule
\bigskip

\end{document}